\newcommand{\bm}[1]{\boldsymbol{#1}}
\newcommand{\va}{{\mathbf{a}}}
\newcommand{\vb}{{\mathbf{b}}}
\newcommand{\vd}{{\mathbf{d}}}
\newcommand{\vh}{{\mathbf{h}}}
\newcommand{\vq}{{\mathbf{q}}}
\newcommand{\vr}{{\mathbf{r}}}
\newcommand{\vs}{{\mathbf{s}}}
\newcommand{\vt}{{\mathbf{t}}}
\newcommand{\vu}{{\mathbf{u}}}
\newcommand{\vv}{{\mathbf{v}}}
\newcommand{\vw}{{\mathbf{w}}}
\newcommand{\vx}{{\mathbf{x}}}
\newcommand{\vy}{{\mathbf{y}}}
\newcommand{\vz}{{\mathbf{z}}}
\newcommand{\vH}{{\mathbf{H}}}
\newcommand{\vJ}{{\mathbf{J}}}
\newcommand{\vL}{{\mathbf{L}}}
\newcommand{\RR}{\mathbb{R}} 
\newcommand{\vzero}{\mathbf{0}} 
\newcommand{\st}{\mbox{ s.t. }}
\DeclareMathOperator*{\argmin}{arg\,min} 
\newcommand{\bc}{\begin{center}}
\newcommand{\ec}{\end{center}}
\newcommand{\bdm}{\begin{displaymath}}
\newcommand{\edm}{\end{displaymath}}
\newcommand{\beq}{\begin{equation}}
\newcommand{\eeq}{\end{equation}}
\newcommand{\bfl}{\begin{flushleft}}
\newcommand{\efl}{\end{flushleft}}
\newcommand{\bt}{\begin{tabbing}}
\newcommand{\et}{\end{tabbing}}
\newcommand{\beqn}{\begin{eqnarray}}
\newcommand{\eeqn}{\end{eqnarray}}
\newcommand{\beqs}{\begin{align*}} 
\newcommand{\eeqs}{\end{align*}}  
\newtheorem{assumption}{Assumption}
\numberwithin{equation}{section}
\numberwithin{theorem}{section}
\newcommand{\yx}[1]{\textcolor{black}{#1}}
\colorlet{shadecolor}{gray!40}
\newtcbox{\alertinline}[1][red]
  {on line, arc = 0pt, outer arc = 0pt,
    colback = #1!20!white, colframe = #1!50!black,
    boxsep = 0pt, left = 1pt, right = 1pt, top = 1pt, bottom = 1pt,
    boxrule = 0pt, bottomrule = 1pt, toprule = 1pt}
\begin{document}
	
	\title{Alternating Direction Method of Multipliers for nonlinear constrained convex problems and applications to distributed resource allocation and constrained machine learning
}
	
	\author{Zhengjie Xiong\thanks{\{xiongz4, xuy21\}@rpi.edu, Department of Mathematical Sciences, Rensselaer Polytechnic Institute, Troy, NY} \and Yangyang Xu$^*$}
	
	\date{\today}
		\maketitle
	
	
		
	\begin{abstract}
We study a class of structured convex optimization problems, which have a two-block separable objective and nonlinear functional 
constraints as well as affine constraints that couple the two block variables. 
Such problems naturally arise from distributed resource allocation and constrained machine learning. To 
achieve high communication efficiency for the distributed applications, 
we propose a nonlinear alternating direction method of multipliers  (NL-ADMM) 
that preserves the classical splitting structure while accommodating general convex functional constraints. Unlike existing ADMM variants for nonconvex constrained problems, the proposed method does not require smoothness of the objective functions or differentiability of the constraint mapping, by leveraging convexity of the considered problem. We establish global convergence and an ergodic $\mathcal{O}(1/k)$ convergence rate of NL-ADMM by assuming the existence of a KKT solution. The results extend those of ADMM for linearly constrained convex problems. Numerical experiments are conducted on two representative distributed tasks. The results on numerous instances demonstrate that NL-ADMM can achieve (in many cases) 100x 
higher communication efficiency 
than the classic augmented Lagrangian method and nearly 2x higher than the Douglas-Rachford operator splitting method, 
making the new method well suited for large-scale distributed learning systems.
	\end{abstract}
	
	\noindent {\bf Keywords} nonlinear functional constraint, nonlinear ADMM, distributed resource allocation, constrained machine learning
	\vspace{0.3cm}
	
	\noindent {\bf Mathematics Subject Classification} 
    90C06, 90C25, 90C60, 65K05, 49J52, 49M37
\section{Introduction}
Large-scale structured optimization problems often arise in distributed environments with separable objectives and globally coupled constraints. On solving such problems, local computation can be carried out efficiently at individual workers, while the enforcement of global feasibility typically requires repeated communication and synchronization, causing communication cost to dominate overall performance and making communication efficiency a central algorithmic consideration. Motivated by these considerations, we study a structured convex optimization problem in the form of 
\beq\label{(P)}
\min_{\vx\in\RR^{n_1},\vy\in\RR^{n_2}} f(\vx)+g(\vy), 
\quad \st \vh(\vx) \leq B\vy, A\vx+C\vy=\vd,
\eeq
where \yx{$f$ and $g$ are proper closed convex but possibly non-differentiable functions}, 
\yx{$\vh=[h_1; \ldots; h_{m_1}]: \RR^{n_1}\to \RR^{m_1}$ is a nonlinear map with each component function being convex} but not necessarily differentiable, $A\in\RR^{m_2\times n_1},B\in\RR^{m_1\times n_2},C\in\RR^{m_2\times n_2}$ are given matrices and $\vd\in\RR^{m_2}$ is a given vector. 

\subsection{Motivating Applications}
\label{sec:applications}
Many applications can be formulated into \eqref{(P)} such as $\ell_1$-problems \cite{yang2011alternating} and multi-class support vector machines \cite{xu2015alternating}. Our main motivation is to design communication-efficient algorithms for distributed problems with coupling constraints. Below, we give two representative applications.
\subsubsection{Distributed resource allocation}

Distributed resource allocation arises in many engineered systems, such as network optimization \cite{halabian2019distributed}, multi-agent coordination \cite{falsone2016distributed}, and power systems  \cite{chen2016distributed}, where multiple workers (or agents) make decisions under 
a joint global resource constraint.
Although many distributed resource allocation problems are written with affine equality constraints, practical operation typically does not require saturating the resource or using its full capacity, and such affine cases are naturally included as special instances of our more general inequality formulation. These settings motivate a model where 
$p$ workers choose local decision vectors $\{\vx_{j}\}_{j=1}^{p}$ to minimize aggregate cost while jointly adhering to a global resource limit, leading to the 
resource-constrained program
\beq\label{Resource_allocation}
\min_{\vx_{1},\dots, \vx_{p}}\sum_{j=1}^{p}f_{j}(\vx_{j}),\quad \text{s.t.} \quad \sum_{j=1}^{p}h_{j}(\vx_{j})\leq 0.
\eeq
Here, each local objective function $f_j(\vx_j)$ models the individual cost or utility associated with worker $j$, while the constraint function $h_j(\vx_j)$ quantifies worker $j$’s contribution to the shared global resource constraint.

To obtain a form compatible with distributed optimization schemes, we rewrite the {coupling} constraint by using a slack variable $\vy\in \mathbb{R}^{p}$ such that $\sum_{j=1}^{p}\vy_{j}=0$ and requiring $h_{j}(\vx_{j})\leq \vy_{j}, \forall\, j$.
Let $\vx=[\vx_1;\ldots;\vx_p]$ and define
\begin{equation*}
f(\vx)=\sum_{j=1}^p f_j(\vx_j),\quad
\vh(\vx)=\begin{bmatrix}h_1(\vx_1)\\\vdots\\h_p(\vx_p)\end{bmatrix},\quad
g(\vy)=\iota_{\mathcal{Y}}(\vy), \quad \mathcal{Y}=\left\{\vy: \sum_{j=1}^{p} \vy_j = 0\right\}.  
\end{equation*}
Then problem \eqref{Resource_allocation} can be equivalently formulated to 
\beq\label{eq:resource_allocation}
\min_{\vx,\vy}f(\vx)+g(\vy),\quad \text{s.t.} \quad \vh(\vx)\leq \vy,
\eeq
which is a special case 
of 
\eqref{(P)} with  $A=\mathbf{0}$, $B=I$, $C=\mathbf{0}$, and $\vd=\mathbf{0}$.

\subsubsection{Distributed machine learning}
Many constrained learning tasks, such as fairness-aware empirical risk minimization \cite{donini2018empirical} and Neyman–Pearson classification \cite{rigollet2011neyman},  require minimizing a loss function while ensuring that a constraint-measuring statistic (e.g., false positive rate, group fairness violation) does not exceed a prescribed threshold. 
These tasks can be formulated as 
the following constrained empirical risk minimization:
\begin{equation}\label{eq:ml_base}
    \min_{\vx_{0}} \frac{1}{N}\sum_{i=1}^{N} f(\vx_{0}, \bm{\bm{\xi}}_{i}^{1}),
    \quad \text{s.t.} \quad
    \frac{1}{M}\sum_{i=1}^{M} h(\vx_{0}, \bm{\bm{\xi}}_{i}^{2}) \leq r,
\end{equation}
where the loss term $f(\vx_{0},\bm{\xi}_{i}^{1})$ measures prediction error  on 
a sample $\bm{\xi}_{i}^{1}$ (e.g., logistic or squared loss or hinge loss), $h(\vx_{0}, \bm{\xi}_{i}^{2})$ measures constraint-related statistic (e.g., violation score) on 
a sample $\bm{\xi}_{i}^{2}$ and $r>0$ denotes a given threshold.
Suppose the data is too large and cannot fit into one computing unit. Then to solve \eqref{eq:ml_base}, 
we partition the 
data samples and distribute them across 
$p$ workers. Let the partition be 
$\{1,\cdots,N\} = \bigcup_{j=1}^{p}\mathcal{A}_j,\  
\{1,\cdots, M\}= \bigcup_{j=1}^{p}\mathcal{B}_j,$    
where $\mathcal{A}_j$ and $\mathcal{B}_j$ represent the local index subsets assigned to 
{worker} $j$. Then problem \eqref{eq:ml_base} can be rewritten as
\begin{equation*}\label{eq:ml_sum}
    \min_{\vx_{0}}
    \frac{1}{N}\sum_{j=1}^{p}\sum_{i\in\mathcal{A}_j} f(\vx_{0}, \bm{\xi}_i^{1}),
    \quad \text{s.t.} \quad
    \frac{1}{M}\sum_{j=1}^{p}\sum_{i\in\mathcal{B}_j} h(\vx_{0}, \bm{\xi}_i^{2}) \leq r.
\end{equation*}
Introducing local copies $\vx_j$ of the global variable $\vx_0$, 
we obtain the equivalent distributed form
\begin{equation}\label{eq:ml_dist}
\begin{aligned}
    \min_{\vx_0, \vx_1, \ldots, \vx_p}
    &\quad \frac{1}{N}\sum_{j=1}^{p}\sum_{i\in\mathcal{A}_j} f(\vx_j, \bm{\xi}_i^{1}),\ 
    \text{ s.t. }\ 
    \frac{1}{M}\sum_{j=1}^{p}\sum_{i\in\mathcal{B}_j} h(\vx_j, \bm{\xi}_i^{2}) \leq r,\ 
    \vx_j = \vx_0, \, j = 1, \ldots, p.
\end{aligned}
\end{equation}
For each {worker} 
$j$, define  the local objective and constraint functions by
\beq\label{eq:consensus_setup}
f_j(\vx_j) = \frac{1}{N}\sum_{i\in\mathcal{A}_j} f(\vx_j, \bm{\xi}_i^{1}),
\quad
h_j(\vx_j) = \frac{1}{M}\sum_{i\in\mathcal{B}_j} h(\vx_j, \bm{\xi}_i^{2}) - \frac{r}{p}.
\eeq
Then \eqref{eq:ml_dist} can be rewritten as 
\begin{equation}\label{eq:ml_local_form}
\begin{aligned}
    \min_{\vx_0, \vx_1,\dots,\vx_p} 
    \sum_{j=1}^{p} f_j(\vx_j),\ 
    \text{ s.t. }\ 
    \sum_{j=1}^{p} h_j(\vx_j) \leq 0,\ 
    \vx_j = \vx_0, \, j = 1,\dots,p.
\end{aligned}
\end{equation}
Again, introduce slack variables $\vy\in \mathbb{R}^{p}$ with $\sum_{j=1}^p \vy_j=0$. Finally, 
we 
let $\vx = [\vx_1;\dots;\vx_p]$ and 
$\tilde{\vy} = [\vy; \vx_0]$. 
Define
\begin{equation*}
f(\vx)=\sum_{j=1}^p f_j(\vx_j),\quad
\vh(\vx)=\begin{bmatrix}h_1(\vx_1)\\\vdots\\h_p(\vx_p)\end{bmatrix},\quad
g(\tilde{\vy})=\iota_{\mathcal{Y}}(\vy), \quad \mathcal{Y}=\left\{\vy: \sum_{j=1}^{p} \vy_j = 0\right\}.    
\end{equation*}
Then \eqref{eq:ml_local_form} is equivalent to 
\begin{equation}
\label{eq:consensus}
\min_{\vx,\tilde{\vy} } f(\vx)+g(\tilde{\vy} ),\quad
\text{s.t.}\quad \vh(\vx)\le \vy,\quad \vx-(\mathbf{1}\otimes I) \vx_0=\vd,
\end{equation}
which is again a special case of \eqref{(P)} with
$A=I$, $B=[\,I\ \ \mathbf{0}\,]$,
$C=[\,\mathbf{0}\ \ -\mathbf{1}\otimes I\,]$ and $\vd=\mathbf{0}$.


\subsection{Algorithm}

\begin{algorithm}[t]
\captionsetup{labelfont=bf}
\caption{Nonlinear Alternating Direction Method of Multipliers (NL-ADMM) for \eqref{(P)}}
\label{alg:nl-admm}
\begin{mdframed}[leftline=false,rightline=false,topline=false,bottomline=false,
                 linewidth=1pt,innertopmargin=.35\baselineskip,innerbottommargin=.35\baselineskip]
\textbf{Initialization:} choose \(\vy^{0}\), \(\vu_{1}^{0}\!\ge\!\mathbf{0}\), \(\vu_{2}^{0}\), $\beta_{1},\beta_{2}>0$, $\gamma_{1},\gamma_{2}\in(0,\frac{1+\sqrt{5}}{2})$
\medskip

\For{\(k=0,1,2,\ldots\)}{
\begin{subequations}\label{eq:NLADMM-steps}
\begin{align}
\vx^{k+1}&=\argmin_{\vx}f(\vx)+\frac{\beta_{1}}{2}\big\| [\vh(\vx)-B\vy^{k}+\vu_{1}^{k}]_{+}\big\|^{2}+\frac{\beta_{2}}{2}\big\| A\vx+C\vy^{k}-\vd+\vu_{2}^{k}\big\|^{2},\label{x_update}\\
\vs^{k+1}&=[-\vh(\vx^{k+1})+B\vy^{k}-\vu_{1}^{k}]_{+},\label{s_update}\\
\vy^{k+1}&=\argmin_{\vy}g(\vy)+\frac{\beta_{1}}{2}\big\| \vh(\vx^{k+1})+\vs^{k+1}-B\vy+\vu_{1}^{k}\big\|^{2}+\frac{\beta_{2}}{2}\big\| A\vx^{k+1}+ C\vy-\vd+\vu_{2}^{k}\big\|^{2},\label{y_update}\\
\vu_{1}^{k+1}&=\vu_{1}^{k}+\gamma_{1} (\vh(\vx^{k+1})+\vs^{k+1}-B\vy^{k+1}),\label{u1_update-0}\\
\vu_{2}^{k+1}&=\vu_{2}^{k}+\gamma_{2} (A\vx^{k+1}+C\vy^{k+1}-\vd). \label{u2_update-0}
\end{align}
\end{subequations}
\If{ \text{a stopping condition is satisfied} }{ Return \((\vx^{k+1},\vy^{k+1},\vu_{1}^{k+1},\vu_{2}^{k+1}).\)}}
\end{mdframed}
\end{algorithm}

A classical approach for solving~\eqref{(P)} is the augmented Lagrangian method (ALM) \cite{rockafellar1976augmented}. In distributed settings, applying ALM to~\eqref{(P)} typically requires solving each augmented Lagrangian subproblem by an iterative method, 
which in turn involves repeated aggregation of global objective and constraint information across workers. As a consequence, multiple rounds of communication between the server and workers are often needed within each outer iteration, leading to high communication cost, especially for the distributed formulations~\eqref{Resource_allocation} and~\eqref{eq:ml_base}.

To address this issue, we propose a nonlinear alternating direction method of multipliers (NL-ADMM) for~\eqref{(P)}. By splitting the augmented Lagrangian and updating primal variables in a blockwise fashion, NL-ADMM avoids repeatedly solving globally coupled subproblems. When applied to the distributed formulations~\eqref{eq:resource_allocation} and~\eqref{eq:consensus}, NL-ADMM requires only one communication round per iteration, resulting in significantly improved communication efficiency, as demonstrated in Section~\ref{sec:numerical}. Algorithm~\ref{alg:nl-admm} summarizes NL-ADMM. We allow separate penalty parameters for the nonlinear and linear constraints, as well as optional over-relaxation in the dual updates. 
Though an operator splitting method can be applied to~\eqref{eq:resource_allocation} and~\eqref{eq:consensus} and potentially achieve higher communication efficiency than the ALM, we observe that NL-ADMM performs significantly superior over the well-known Douglas--Rachford Splitting (DRS) method \cite{eckstein1992douglas}.

\subsubsection{Distributed implementation} 
We 
describe how Algorithm~\ref{alg:nl-admm} can be applied to the two distributed formulations in Section~\ref{sec:applications}. We adopt a server-worker setting; see Figure~\ref{fig:distributed_setup} for an illustration. 
For the \emph{distributed resource allocation} \eqref{eq:resource_allocation}, Algorithm~\ref{alg:nl-admm} specializes to the updates below: 
\begin{subequations}
\begin{align}
    \vx_{j}^{k+1}&=\argmin_{\vx_{j}} f_{j}(\vx_{j})+\frac{\beta_{1}}{2}[h_{j}(\vx_{j})-\vy_{j}^{k}+\vu_{1,j}^{k}]_{+}^{2}\label{distributed_implementation_1_x},\\
    \vs_{j}^{k+1}&=[-h_{j}(\vx_{j}^{k+1})+\vy_{j}^{k}-\vu_{1,j}^{k}]_{+},\label{distributed_implementation_1_s}\\
    \vy^{k+1} &=
    \vv-\frac{1}{p}\mathbf{1}\mathbf{1}^{\top }\vv, \text{ with }\vv=\vh(\vx^{k+1})+\vs^{k+1}+\vu_{1}^{k},\label{distributed_implementation_1_y}\\
    \vu_{1,j}^{k+1} &= \vu_{1,j}^{k}+\gamma_{1}(h_{j}(\vx_{j}^{k+1})+\vs_{j}^{k+1}-\vy_{j}^{k+1}),\label{distributed_implementation_1_u}
\end{align}
\end{subequations}
where \eqref{distributed_implementation_1_x}, \eqref{distributed_implementation_1_s}, and \eqref{distributed_implementation_1_u} are performed for each $j=1,\ldots,p$.

\begin{figure}[t]
    \centering
    \includegraphics[width=0.5\textwidth]{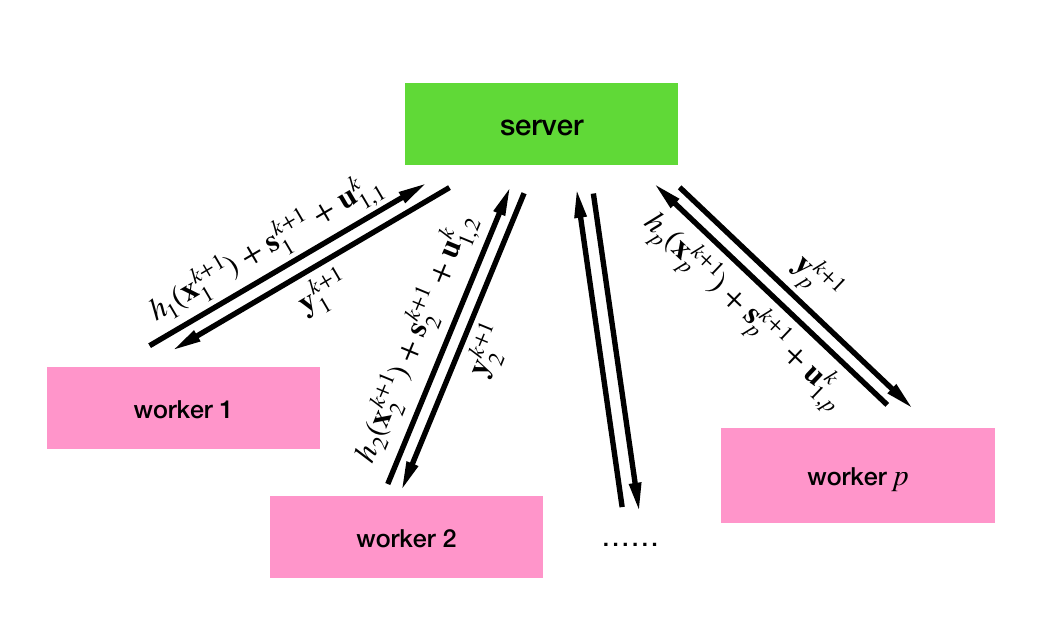}
    \caption{Distributed implementation of the proposed algorithm for the resource allocation model~\eqref{eq:resource_allocation}. At iteration $k$, each worker $j$ sends the locally computed quantity $h_j(\vx_j^{k+1})+\vs_j^{k+1}+\vu_{1,j}^{k}$ to the server, which aggregates these values and broadcasts the entries of the updated global variable $\vy^{k+1}$ to corresponding workers.}
    \label{fig:distributed_setup}
\end{figure}
The distributed iteration admits a precise sequential interpretation. 
At iteration $k$, we assume that $\vy_j^{k}$ and $\vu_{1,j}^{k}$ are available at worker $j$ (for $k=0$, $\vy_j^{0}$ and $\vu_{1,j}^{0}$ are given initial values). 
Each worker $j$ then updates its primal variable $\vx_j^{k+1}$ and slack variable $\vs_j^{k+1}$ according to \eqref{distributed_implementation_1_x} and \eqref{distributed_implementation_1_s}, and sends the quantity $h_j(\vx_j^{k+1})+\vs_j^{k+1}+\vu_{1,j}^{k}$ to the server. 
The server aggregates the returned values and updates the 
global variable $\vy^{k+1}$ based on \eqref{distributed_implementation_1_y}, whose entries are subsequently broadcast to corresponding workers. 
Upon receiving $\vy_{j}^{k+1}$, each worker updates its local dual variable $\vu_{1,j}^{k+1}$ using $(\vx_j^{k+1},\vs_j^{k+1},\vy_{j}^{k+1})$, which corresponds to \eqref{distributed_implementation_1_u}. 
Consequently, each iteration consists of a single aggregation step from the workers to the server and a single broadcast step from the server to the workers, and thus incurs only one communication round per iteration.

For the \emph{constrained machine learning model}~\eqref{eq:consensus}, the distributed implementation follows the same sequential pattern as above, with the addition of terms enforcing the consensus constraint $\vx_j=\vx_0$. In particular, the $\vx$- and $\vy$-updates include the corresponding quadratic penalty and dual terms associated with the linear consensus constraint, and an explicit update of the global variable $\vx_0$ is introduced, which reduces to an averaging step across workers at the server. Aside from these additional consensus-related terms, the structure of the local and global updates and the communication pattern remain unchanged.

\subsection{Contributions} 

\yx{Our contributions are three-fold and summarized below.
\begin{itemize}
\item First, we develop a nonlinear alternating direction method of multipliers (NL-ADMM) for a class of two-block structured convex optimization problems with nonlinear functional constraints. Unlike existing ADMM variants that are tailored to linear constraints or rely on smoothness or differentiability assumptions, the proposed framework accommodates general convex, possibly nonsmooth, nonlinear constraints while preserving the classical ADMM splitting structure. To the best of our knowledge, this work constitutes 
the first ADMM-type framework that systematically handles convex optimization problems with nonlinear functional constraints without resorting to linearization or smoothing techniques.
\item Second, under a standard KKT existence assumption, we establish global convergence of NL-ADMM and further derive an ergodic $\mathcal{O}(1/k)$ convergence rate. The analysis is carried out under minimal regularity conditions: it relies solely on convexity and does not require smoothness of the objective functions, differentiability or Lipschitz continuity of the nonlinear constraint mapping, nor rank conditions on the associated linear operators. These results extend classical ADMM convergence theory beyond the linear-constraint setting.
\item Finally, we demonstrate the practical effectiveness of NL-ADMM on representative distributed resource allocation and constrained machine learning problems. The numerical results show that NL-ADMM consistently exhibits more favorable convergence behavior than the classical augmented Lagrangian method and the Douglas–Rachford Splitting method, particularly in terms of communication efficiency across a range of distributed problem settings.
\end{itemize}
}

\subsection{Notations} 
We use $\lVert\cdot\rVert$ for the Euclidean norm. 
For any set $S\subseteq\mathbb{R}^{n}$, the $0$-$\infty$ indicator function $\iota_{S}:\mathbb{R}^{n}\to\mathbb{R}\cup\{\infty\}$ is defined by $\iota_{S}(\vx)=0$ if $\vx\in S$ and $+\infty$ otherwise.
For any vector $\vx$, we define $[\vx]_{+} = \max\{\vx, 0\}$. 
For any $\va,\vb\in\mathbb{R}^{n}$, we denote $\va\odot\vb$ as their element-wise product. The Kronecker product of matrices $A$ and $B$ is denoted by $A\otimes B$. We use $I$ for the  identity matrix, $\mathbf{1}$ for the  all-one vector, and $\mathbf{0}$ for a zero vector or a zero matrix. 
The subdifferential of a convex function $f$ at $\vx$ is denoted by $\partial f(\vx)$. \yx{For a vector function $\vh = [h_1; \ldots; h_m]$ with each $h_j$ being convex, we denote $\partial \vh(\vx) = \big\{[\vz_1, \ldots, \vz_m]^\top: \vz_j\in \partial h_j(\vx), \forall \, j\big\}$.}

\section{Related Work}
The ADMM, 
first introduced by Glowinski and Marrocco \cite{glowinski1975approximation}, is a fundamental algorithm for solving structured convex optimization problems \yx{of the form}
\begin{equation*}
    \min_{\vx,\vy} f(\vx)+g(\vy) \quad \st A\vx+C\vy=\vd,
\end{equation*}
where $f$ and $g$ are closed convex functions. 
Classical convergence of ADMM for two-block structured convex optimization dates back to the operator-splitting work of Gabay and Mercier \cite{gabay1976dual} and the monotone-operator analysis of Eckstein and Bertsekas \cite{eckstein1992douglas}, and an ergodic rate of $\mathcal{O}(1/k)$ is established by Monteiro \& Svaiter in \cite{monteiro2013iteration} and He \& Yuan in \cite{he2012-rate-drs}, where 
$k$ denotes the iteration number. A comprehensive overview of the ADMM and its applications can be found in \cite{boyd2011distributed}. Building on these foundations, a wide range of extensions have been developed \yx{for different structured problems}. 

\subsection{Linear constrained problems}
Under linear constraints, we consider two main extensions of the classical two-block ADMM framework: multiblock formulations and problems with nonconvex objectives. Building on two-block convergence results, multiblock extensions have been extensively studied, and it is now well understood that direct extensions may diverge even in convex settings, as shown by Chen \yx{et al.} 
\cite{chen2016direct}.  One approach to restoring convergence is to modify the algorithm itself: a prediction–correction variant of three-block ADMM is proposed in~\cite{he2018class}, analyzed through a variational inequality reformulation, and shown to enjoy global convergence with an ergodic  $\mathcal{O}(1/k)$ convergence rate. A complementary line of work studies conditions under which the original three-block ADMM remains convergent; in particular, when one block is smooth and strongly convex with its condition number lying in a prescribed interval, global convergence holds for any penalty parameter~\cite{lin2018global}. A similar result is shown in \cite[Theorem 2.1]{davis2017three} as a special case of a three-operator splitting method.


Three-block results naturally extend to the multiblock setting \cite{xu2018hybrid-Jac-GS, gao2019rpd-bcu, deng2017parallel, lin2015global, chen2019unified, he2012alternating, melo2017iteration, xu2019asynchronous-pdbcu}, where a key distinction lies between Gauss–Seidel–type schemes, which update blocks sequentially using the latest iterates, and Jacobi-type schemes, which update all blocks in parallel using only previous-iteration information. A unified symmetric Gauss–Seidel (sGS)–based proximal ADMM framework is developed in~\cite{chen2019unified}, integrating majorization techniques, indefinite proximal terms, inexact updates, and sGS decompositions, and establishing global convergence. In contrast, a parallel Jacobi-type multiblock ADMM is studied in~\cite{deng2017parallel}, where convergence is guaranteed either under near-orthogonality and full column-rank assumptions on the constraint matrices or through blockwise proximal regularization, yielding an $\mathcal{O}(1/k)$  convergence rate. A hybrid Gauss-Seidel and Jacobi update scheme is presented in \cite{xu2018hybrid-Jac-GS} and also achieves the $\mathcal{O}(1/k)$  convergence rate.

In addition, there is a substantial body of work on ADMM for nonconvex optimization with linear constraints \cite{barber2024convergence, wang2019global, hong2016convergence, xu2012alternating, hong2017distributed, themelis2020douglas}. Rather than attempting a comprehensive review, we briefly mention a few representative results. Hong, Luo, and Razaviyayn \cite{hong2016convergence} study multi-block ADMM for a family of affinely constrained nonconvex optimization problems, 
with a focus on 
consensus and sharing formulations, and establish convergence of the generated iterates to stationary solutions under appropriate regularity conditions. Wang, Yin, and Zeng \cite{wang2019global} extend the analysis of \cite{hong2016convergence} to a broader class of 
affinely constrained optimization problems with nonconvex and possibly nonsmooth objectives, establishing subsequential convergence to stationary points and global convergence under the Kurdyka-{\L}ojasiewicz (KL) condition. 

Taken together, the above works establish a mature theory for ADMM under linear constraints, encompassing both multiblock formulations and nonconvex objective settings. 
When there are nonlinear functional constraints, 
this existing ADMM theory does not apply. 

\subsection{Nonlinear constrained problems} 
In contrast to the well-developed theory for ADMM under linear constraints, the literature on nonlinear variants remains relatively limited.  We are not aware of any work on ADMM 
for convex problems involving nonlinear functional constraints without resorting to 
linearization. Aybat and Hamedani \cite{aybat2019distributed} propose a distributed inexact primal–dual algorithm (DPDA-D) for resource sharing over time-varying networks by reformulating the problem as a saddle-point problem and applying a linearized PDA scheme combined with approximate consensus. They established equivalence between classic PDA and preconditioned ADMM for problems with linear coupling operators and accordingly characterized their method as ADMM-like rather than a pure ADMM scheme. However, when the coupling operator is nonlinear, their method relies on first-order linearization and no such equivalence is established. In contrast, our method directly handles nonlinear constraints without linearization.

Most existing works on ADMM with nonlinear constraints focus on nonconvex problems with nonlinear equality constraints. Cohen, Hallak, and Teboulle~\cite{cohen2022dynamic} propose a dynamic linearized ADMM for problems with nonlinear equalities $F(\vx)=G\vy$, assuming smooth objective components with Lipschitz properties, a constraint mapping $F$ with a locally Lipschitz Jacobian, and a full row rank matrix $G$. Under these assumptions and a bounded adaptive sequence of proximal parameters, they establish convergence to critical points and, under a KL condition, global convergence of the entire sequence. Zhu, Zhao, and Zhang~\cite{zhu2024first} develop a first-order primal--dual augmented-Lagrangian scheme for nonconvex problems with nonlinear equalities $\phi(\vx)+B\vy=\mathbf{0}$, assuming smooth objective terms with Lipschitz gradients, a constraint mapping $\phi$ that is Lipschitz continuous, and a full row rank matrix $B$. Their method linearizes the augmented terms at each iteration and, under these assumptions, achieves an $\epsilon$-stationary point within $\mathcal{O}(\epsilon^{-2})$ iterations, with global and linear convergence ensured by an additional error-bound condition. 

More recent works follow similar lines. Sun and Sun~\cite{sun2024dual} analyze a scaled dual-descent ADMM for \yx{problems with} smooth nonlinear equality constraints $\vh(\vx)=\mathbf{0}$, requiring $\vh$ to be continuously differentiable with Lipschitz \yx{continuous} and uniformly bounded Jacobian, and the smooth objective to possess a Lipschitz \yx{continuous} gradient; under these conditions, they 
establish iteration complexity results to produce an $\epsilon$-stationary point. 
Hien and Papadimitriou~\cite{hien2024multiblock} propose a multiblock majorization-based ADMM for \yx{problems with} nonlinear 
constraints $\phi(\vx)+B\vy=\mathbf{0}$, under assumptions including block-separable subdifferentials, Lipschitz continuity of the gradient of the differentiable term in the objective, and full row rankness of $B$.  
Within this framework, they design surrogate functions that satisfy curvature or strong-convexity conditions. 
Under these settings, they establish subsequential convergence to critical points, $\mathcal{O}(\epsilon^{-2})$ complexity, and global convergence under a KL property. El~Bourkhissi and Necoara~\cite{el2025convergence} 
study a closely related problem with nonlinear equality constraints of the form $F(\vx)+G\vy=\mathbf{0}$, where the nonlinearity involves a single primal block $\vx$ rather than the multiblock setting 
considered in~\cite{hien2024multiblock}. They propose an inexact linearized ADMM achieving $\mathcal{O}(\epsilon^{-2})$ complexity together with full-sequence convergence under KL property.

 Although a nonlinear inequality constraint can be converted into an equality one by introducing a nonnegative slack variable, 
 the above-mentioned convergence results 
 do not apply to our considered problem \eqref{(P)}. In particular, \cite{cohen2022dynamic,hien2024multiblock,el2025convergence,zhu2024first} require the objective component associated with the variable $\vy$ to have 
 a Lipschitz continuous gradient, which enables smoothness-based linearization or majorization of the augmented Lagrangian. 
 However, in \eqref{(P)}, both $f$ and $g$ can be nondifferentiable. 
Moreover, \cite{cohen2022dynamic,sun2024dual,el2025convergence} require the nonlinear constraint mapping to have a Lipschitz continuous 
Jacobian, 
either globally or on a compact set. 
This condition is used to establish outer-level descent.  
However, in our considered problem \eqref{(P)}, 
the nonlinear constraint mapping $\vh$ is not necessarily differentiable. In contrast to existing works, our convergence analysis does not invoke any differentiability assumptions on $\vh$. 
Instead, the proposed method leverages convexity and blockwise separability while preserving the classical ADMM splitting structure. 

\section{Convergence Analysis}

{In this section, we analyze the convergence of Algorithm~\ref{alg:nl-admm} and establish its convergence rate. We make the following standard assumption.}
\begin{assumption}[Existence of a KKT point]\label{assumption_2}
There exists a point $(\vx^{*},\vy^{*},\vu_{1}^{*},\vu_{2}^{*})$ such that the following KKT conditions hold: 
\beq\label{KKT condition}
\begin{aligned}
    &\mathbf{0}\in \partial f(\vx^{*})+
    \yx{\beta_1\partial \vh(\vx^{*})^{\top}\vu_{1}^{*}}+\beta_{2}A^{\top}\vu_{2}^{*},\\
    &\mathbf{0}\in \partial g(\vy^{*})-\beta_{1}B^{\top}\vu_{1}^{*}+\beta_{2}C^{\top}\vu_{2}^{*},\\
    &\vh(\vx^{*})\leq B\vy^{*},\ A\vx^{*}+C\vy^{*}=\vd,\ \vu_{1}^{*}\geq \mathbf{0}, \\
    &\vu_{1}^{*}\odot(\vh(\vx^{*})-B\vy^{*}) =\mathbf{0}.
\end{aligned}
\eeq
\end{assumption}{In the above assumption, $\beta_1 \vu_1^*$ and $\beta_2 \vu_2^*$ are the Lagrangian multipliers corresponding to the inequality constraint $\vh(\vx) \le B \vy$ and the equality constraint $A\vx + C\vy = \vd$, where $\beta_1$ and $\beta_2$ are the same as those in Algorithm~\ref{alg:nl-admm}. This is for the simplicity of our analysis, as we will see that the scaled dual iterates $\beta_1 \vu_1^k$ and $\beta_2 \vu_2^k$ converge to the optimal dual solution.}

\subsection{Reformulations}
{For ease of notation, we} let $\vz=[\vx; \vs]
$ and define
\begin{equation*}
\begin{aligned}
    &F(\vz):=f(\vx)+\iota_{+}(\vs),\\
    &\vH(\vz):=\vh(\vx)+\vs,\\
    &\vL(\vz):= A \vx - \vd. 
    \end{aligned}    
\end{equation*}
Then 
noting that \eqref{x_update} and \eqref{s_update} are equivalent to the following joint update: 
\begin{equation*}
(\vx^{k+1},\vs^{k+1})=\argmin_{\vs\geq 0,\, \vx}f(\vx)+\frac{\beta_{1}}{2}\big\| \vh(\vx)+\vs-B\vy^{k}+\vu_{1}^{k}\big\|^{2}+\frac{\beta_{2}}{2}\big\| A\vx+C\vy^{k}-\vd+\vu_{2}^{k}\big\|^{2},    
\end{equation*}
we can rewrite the proposed 
{updates in \eqref{x_update}-\eqref{u2_update-0}} to the following more compact manner:
\begin{subequations}\label{eq:NLADMM-steps-new}
\begin{align}
\vz^{k+1}&\in\argmin_{\vz}
   F(\vz)+\tfrac{\beta_{1}}{2}\big\|\vH(\vz)-B \vy^{k}+\vu_{1}^{k}\big\|^{2}
       +\tfrac{\beta_{2}}{2}\big\|\vL(\vz)+C \vy^{k}+\vu_{2}^{k}\big\|^{2}\label{z_update-2},\\
\vy^{k+1}&\in\argmin_{\vy}
   g(\vy)+\tfrac{\beta_{1}}{2}\big\|\vH(\vz^{k+1})-B \vy+\vu_{1}^{k}\big\|^{2}
        +\tfrac{\beta_{2}}{2}\big\|\vL(\vz^{k+1})+C \vy+\vu_{2}^{k}\big\|^{2} \label{y_update-2},\\
\vu_{1}^{k+1}&=\vu_{1}^{k}+\gamma_{1}\!\left(\vH(\vz^{k+1})-B \vy^{k+1}\right)\label{u1_update},\\
\vu_{2}^{k+1}&=\vu_{2}^{k}+\gamma_{2}\!\left(\vL(\vz^{k+1})+C \vy^{k+1}\right).\label{u2_update}
\end{align}
\end{subequations}

\subsection{{Convergence results}}
In order to prove the convergence and {establish the convergence }rate of Algorithm \ref{alg:nl-admm}, 
we first {prove a few lemmas.}

\begin{lemma}\label{core_ineq}
Under Assumption \ref{assumption_2}, let $\{\vy^{k},\vz^{k},\vu_{1}^{k},\vu_{2}^{k}\}_{k\in \mathbb{N}^{+}}$ be the sequence generated by Algorithm \ref{alg:nl-admm}. 
Then 
\yx{for any feasible $(\vy,\vz)$ and any $(\vu_{1},\vu_{2})$, it holds}
\beq\label{(9_0)}
\begin{aligned}
& F(\vz^{k+1}) + g(\vy^{k+1})
  - \big(F(\vz) + g(\vy)\big)
  + \beta_{1}\big\langle \vH(\vz^{k+1}) - B\vy^{k+1}, \vu_{1}\big\rangle 
  + \beta_{2}\big\langle \vL(\vz^{k+1}) + C\vy^{k+1}, \vu_{2}\big\rangle  \\
&\quad
+ \tfrac{\beta_{1}}{2\gamma_{1}}
  \Bigl(
      \lVert \vu_{1}^{k+1} - \vu_{1}^{k} \rVert^{2}
    + \lVert \vu_{1}^{k+1} - \vu_{1} \rVert^{2}
    - \lVert \vu_{1}^{k}   - \vu_{1} \rVert^{2}
  \Bigr) \\
&\quad
+ \tfrac{\beta_{2}}{2\gamma_{2}}
  \Bigl(
      \lVert \vu_{2}^{k+1} - \vu_{2}^{k} \rVert^{2}
    + \lVert \vu_{2}^{k+1} - \vu_{2} \rVert^{2}
    - \lVert \vu_{2}^{k}   - \vu_{2} \rVert^{2}
  \Bigr) \\
&\le
\tfrac{\beta_{1}}{2}
  \bigl(1 - \tfrac{1}{\gamma_{1}}\bigr)^{2}
  \lVert \vu_{1}^{k} - \vu_{1}^{k-1} \rVert^{2}
- \tfrac{\beta_{1}}{2}
  \Bigl(
      \lVert B\vy^{k+1} - B\vy \rVert^{2}
    - \lVert B\vy^{k}   - B\vy \rVert^{2}
  \Bigr) \\
&\quad
+ \tfrac{\beta_{1}}{\gamma_{1}}
  \bigl(1 - \tfrac{1}{\gamma_{1}}\bigr)
  \lVert \vu_{1}^{k+1} - \vu_{1}^{k} \rVert^{2} \\
&\quad
+ \tfrac{\beta_{2}}{2}
  \bigl(1 - \tfrac{1}{\gamma_{2}}\bigr)^{2}
  \lVert \vu_{2}^{k} - \vu_{2}^{k-1} \rVert^{2}
- \tfrac{\beta_{2}}{2}
  \Bigl(
      \lVert C\vy^{k+1} - C\vy \rVert^{2}
    - \lVert C\vy^{k}   - C\vy \rVert^{2}
  \Bigr) \\
&\quad
+ \tfrac{\beta_{2}}{\gamma_{2}}
  \bigl(1 - \tfrac{1}{\gamma_{2}}\bigr)
  \lVert \vu_{2}^{k+1} - \vu_{2}^{k} \rVert^{2}.
\end{aligned}
\eeq

\end{lemma}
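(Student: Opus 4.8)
The plan is to run the standard variational-inequality style of ADMM analysis, but with two nonstandard ingredients forced by the nonlinear convex map $\vh$: a careful treatment of the $\vz$-subproblem optimality, and a subgradient inequality for $\vh$ that is valid only because the relevant residual is nonnegative. Throughout I introduce the ``unrelaxed'' duals $\hat{\vu}_i^{k+1}=\vu_i^{k}+\vr_i^{k+1}$ with residuals $\vr_1^{k+1}=\vH(\vz^{k+1})-B\vy^{k+1}$ and $\vr_2^{k+1}=\vL(\vz^{k+1})+C\vy^{k+1}$, so that $\vr_i^{k+1}=\tfrac1{\gamma_i}(\vu_i^{k+1}-\vu_i^{k})$ by \eqref{u1_update}--\eqref{u2_update}, and write $\Delta\vy=\vy^{k+1}-\vy^{k}$.

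First I would extract the first-order optimality conditions of the two subproblems in \eqref{eq:NLADMM-steps-new}. For the $\vy$-step this is routine: since $g$ is convex and the penalty is quadratic in $\vy$, there is $\tilde g^{k+1}\in\partial g(\vy^{k+1})$ with $\tilde g^{k+1}=\beta_1 B^{\top}\hat{\vu}_1^{k+1}-\beta_2 C^{\top}\hat{\vu}_2^{k+1}$. The $\vz$-step is the delicate one: eliminating $\vs\ge\vzero$ first shows that the $\vx$-marginal objective contains $\tfrac{\beta_1}{2}\big\|[\vh(\vx)-B\vy^{k}+\vu_1^{k}]_{+}\big\|^{2}$, which is genuinely convex in $\vx$ (square of a nonnegative convex function), so the subproblem is convex even though the joint penalty is not. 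This yields $\tilde f^{k+1}+\beta_1 (G^{k+1})^{\top}\vp+\beta_2 A^{\top}\vq=\vzero$ for some $\tilde f^{k+1}\in\partial f(\vx^{k+1})$, $G^{k+1}\in\partial\vh(\vx^{k+1})$, where $\vp:=\vH(\vz^{k+1})-B\vy^{k}+\vu_1^{k}=[\vh(\vx^{k+1})-B\vy^{k}+\vu_1^{k}]_{+}\ge\vzero$ and $\vq:=\vL(\vz^{k+1})+C\vy^{k}+\vu_2^{k}$, together with the complementarity $\vp\odot\vs^{k+1}=\vzero$ from the $\vs$-normal-cone condition.

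Next I would convert these into two primal-gap inequalities. Convexity of $g$ gives $g(\vy^{k+1})-g(\vy)\le\beta_1\langle\hat{\vu}_1^{k+1},B(\vy^{k+1}-\vy)\rangle-\beta_2\langle\hat{\vu}_2^{k+1},C(\vy^{k+1}-\vy)\rangle$, and convexity of $f$ gives $F(\vz^{k+1})-F(\vz)\le-\beta_1\langle\vp,G^{k+1}(\vx^{k+1}-\vx)\rangle-\beta_2\langle\vq,\vL(\vz^{k+1})-\vL(\vz)\rangle$. Here the crucial move is to lower-bound $\langle\vp,G^{k+1}(\vx^{k+1}-\vx)\rangle$ by $\langle\vp,\vh(\vx^{k+1})-\vh(\vx)\rangle$, which is legitimate \emph{only because} $\vp\ge\vzero$ and each $\vh_i$ is convex, and then to use $\vh(\vx)=\vH(\vz)-\vs$, $\langle\vp,\vs^{k+1}\rangle=0$, and $\langle\vp,\vs\rangle\ge0$ to arrive at $F(\vz^{k+1})-F(\vz)\le-\beta_1\langle\vp,\vH(\vz^{k+1})-\vH(\vz)\rangle-\beta_2\langle\vq,\vL(\vz^{k+1})-\vL(\vz)\rangle$. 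Substituting $\vp=\hat{\vu}_1^{k+1}+B\Delta\vy$, $\vq=\hat{\vu}_2^{k+1}-C\Delta\vy$ and the feasibility identities $\vH(\vz)=B\vy$, $\vL(\vz)=-C\vy$, then adding the two inequalities together with $\beta_1\langle\vr_1^{k+1},\vu_1\rangle+\beta_2\langle\vr_2^{k+1},\vu_2\rangle$, several cross terms cancel and I am left with inner products ready for the cosine identity.

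Finally I would apply the three-point identity $\langle a-b,c-b\rangle=\tfrac12(\|a-b\|^{2}+\|c-b\|^{2}-\|a-c\|^{2})$ in two places: to $\langle\vu_i^{k+1}-\vu_i^{k},\vu_i-\vu_i^{k}\rangle$, producing exactly the dual telescoping block on the left of \eqref{(9_0)} plus the $\tfrac{\beta_i}{\gamma_i}(1-\tfrac1{\gamma_i})\|\vu_i^{k+1}-\vu_i^{k}\|^{2}$ terms; and to $\langle B\Delta\vy,B(\vy^{k+1}-\vy)\rangle$ and its $C$-analog, producing the $B\vy$/$C\vy$ telescoping terms plus leftover $-\tfrac{\beta_i}{2}\|B\Delta\vy\|^{2}$, $-\tfrac{\beta_i}{2}\|C\Delta\vy\|^{2}$ terms. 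The last obstacle is the surviving cross term $-\beta_1\langle B\Delta\vy,\vr_1^{k+1}\rangle+\beta_2\langle C\Delta\vy,\vr_2^{k+1}\rangle$: I would control it using the $\vy$-optimality at \emph{both} iterations $k$ and $k+1$ together with monotonicity of $\partial g$, namely $\beta_1\langle\hat{\vu}_1^{k+1}-\hat{\vu}_1^{k},B\Delta\vy\rangle\ge\beta_2\langle\hat{\vu}_2^{k+1}-\hat{\vu}_2^{k},C\Delta\vy\rangle$; after substituting $\hat{\vu}_i^{k}-\vu_i^{k}=-(1-\tfrac1{\gamma_i})(\vu_i^{k}-\vu_i^{k-1})$ this reduces the cross term to $\beta_1(1-\tfrac1{\gamma_1})\langle B\Delta\vy,\vu_1^{k}-\vu_1^{k-1}\rangle-\beta_2(1-\tfrac1{\gamma_2})\langle C\Delta\vy,\vu_2^{k}-\vu_2^{k-1}\rangle$, and completing the square against the leftover quadratics (Young's inequality) yields precisely the $\tfrac{\beta_i}{2}(1-\tfrac1{\gamma_i})^{2}\|\vu_i^{k}-\vu_i^{k-1}\|^{2}$ terms while discarding a nonpositive square. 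The main difficulty is thus concentrated in the $\vz$-subproblem analysis --- establishing convexity of the $\vx$-marginal and the sign condition $\vp\ge\vzero$ with complementarity, which is exactly what permits the convex subgradient inequality for $\vh$ to be applied in the correct direction; the remaining steps are standard ADMM bookkeeping.
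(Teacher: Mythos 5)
Your proposal is correct and follows essentially the same route as the paper's proof: the same key observation that the residual $\vH(\vz^{k+1})-B\vy^{k}+\vu_{1}^{k}=[\vh(\vx^{k+1})-B\vy^{k}+\vu_{1}^{k}]_{+}\ge\vzero$ licenses the one-sided subgradient inequality for the convex map $\vh$, the same use of the $\vy$-optimality conditions at consecutive iterations (i.e., monotonicity of $\partial g$) plus Young's inequality to absorb the cross term $\langle \vu_i^{k+1}-\vu_i^k,\,\cdot\,\rangle$, and the same polarization/three-point identities at the end. The only cosmetic difference is that you eliminate $\vs$ first and work with the convex $\vx$-marginal plus explicit complementarity $\vp\odot\vs^{k+1}=\vzero$, whereas the paper keeps the joint $(\vx,\vs)$-subproblem in the compact $\vz$-notation; the two treatments are equivalent (and yours makes the convexity of the subproblem slightly more explicit).
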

\begin{proof}
By \yx{the} optimality condition \yx{for the problem in \eqref{z_update-2}} with respect to $\vz^{k+1}$, \yx{we have}
\begin{equation*}\label{z_opt}
\mathbf{0}
\in 
\partial F(\vz^{k+1})
+ \beta_{1} \partial{\vH}(\vz^{k+1})^{\top}
    \bigl(\vH(\vz^{k+1}) - B\vy^{k} + \vu_{1}^{k}\bigr)
+ \beta_{2} \partial{\vL}(\vz^{k+1})^{\top}
    \bigl(\vL(\vz^{k+1}) + C\vy^{k} + \vu_{2}^{k}\bigr).
\end{equation*}
By the above inclusion, there exist $G_{\vH}(\vz^{k+1})\in \partial \vH(\vz^{k+1})$ and $G_{\vL}(\vz^{k+1})\in \partial \vL(\vz^{k+1})$, \yx{such that
\begin{equation*}
\vv := -\beta_{1} G_{\vH}(\vz^{k+1})^{\top}\big(\vH(\vz^{k+1})-B\vy^{k}+\vu_{1}^{k}\big)-\beta_{2} G_{\vL}(\vz^{k+1})^{\top}\big(\vL(\vz^{k+1})+C\vy^{k}+\vu_{2}^{k}\big) \in \partial F(\vz^{k+1}).    
\end{equation*}
By the convexity of $f$ and $\iota_+(\cdot)$, it holds that}
\begin{equation}\label{F_cvx}
\langle \vz^{k+1}-\vz,\vv\rangle\geq F(\vz^{k+1})-F(\vz), \quad
\forall\, \vz.    
\end{equation}
\yx{In addition, note} that 
\begin{equation*}
\begin{aligned}
\vH(\vz^{k+1}) - B\vy^{k} + \vu_{1}^{k}
&= \vh(\vx^{k+1}) - B\vy^{k} + \vu_{1}^{k}
   + \bigl[-\vh(\vx^{k+1}) + B\vy^{k} - \vu_{1}^{k}\bigr]_{+}= \bigl[\vh(\vx^{k+1}) - B\vy^{k} + \vu_{1}^{k}\bigr]_{+}.
\end{aligned}
\end{equation*}
 \yx{Hence, it follows from the convexity of} $h_{j}$ for all $j$ that  
\beq\label{h_cvx}
\big\langle \vz^{k+1} - \vz,
    G_{\vH}(\vz^{k+1})^{\top}
    \bigl(\vH(\vz^{k+1}) - B\vy^{k} + \vu_{1}^{k}\bigr)
\big\rangle
\ge
\big\langle 
    \vH(\vz^{k+1}) - \vH(\vz),
    \vH(\vz^{k+1}) - B\vy^{k} + \vu_{1}^{k}
\big\rangle.
\eeq
Meanwhile, since $\vL(\vz)$ 
is affine, \yx{we have} 
\beq\label{L_affine}
\big\langle 
    \vz^{k+1} - \vz,
    G_{\vL}(\vz^{k+1})^{\top}
    \bigl(\vL(\vz^{k+1}) + C\vy^{k} + \vu_{2}^{k}\bigr)
\big\rangle
=
\big\langle
    \vL(\vz^{k+1}) - \vL(\vz),
    \vL(\vz^{k+1}) + C\vy^{k} + \vu_{2}^{k}
\big\rangle.
\eeq
Thus, 
\yx{combining \eqref{F_cvx}, \eqref{h_cvx}, and \eqref{L_affine},} we derive 
\beq
\begin{aligned}\label{(1)}
F(\vz^{k+1}) - F(\vz)
&+
\beta_{1}
\big\langle 
    \vH(\vz^{k+1}) - \vH(\vz),
    \vH(\vz^{k+1}) - B\vy^{k} + \vu_{1}^{k}
\big\rangle \\
&+
\beta_{2}
\big\langle 
    \vL(\vz^{k+1}) - \vL(\vz),
    \vL(\vz^{k+1}) + C\vy^{k} + \vu_{2}^{k}
\big\rangle
\le 0.
\end{aligned}
\eeq

By \eqref{u1_update}, we have 
\beq\label{eq:H-term}
\begin{aligned}
&\big\langle \vH(\vz^{k+1}) - \vH(\vz),
    \vH(\vz^{k+1}) - B\vy^{k} + \vu_{1}^{k} \big\rangle \\
=&\big\langle \vH(\vz^{k+1}) - \vH(\vz), \vu_{1}^{k+1} \big\rangle
 + \big\langle \vH(\vz^{k+1}) - \vH(\vz),
    \vH(\vz^{k+1}) - B\vy^{k} + \vu_{1}^{k} - \vu_{1}^{k+1} \big\rangle \\
=&\big\langle \vH(\vz^{k+1}) - \vH(\vz), \vu_{1}^{k+1} \big\rangle
 + \big\langle \vH(\vz^{k+1}) - \vH(\vz),
    (1-\gamma_{1})(\vH(\vz^{k+1}) - B\vy^{k+1})
    + B\vy^{k+1} - B\vy^{k} \big\rangle \\
=&\big\langle \vH(\vz^{k+1}) - \vH(\vz), \vu_{1}^{k+1} \big\rangle
 + \big\langle \vH(\vz^{k+1}) - \vH(\vz), B\vy^{k+1} - B\vy^{k} \big\rangle \\
&\quad
 + (1-\gamma_{1})\big\langle \vH(\vz^{k+1}) - \vH(\vz),
    \vH(\vz^{k+1}) - B\vy^{k+1} \big\rangle .
\end{aligned}
\eeq
Similarly, by \eqref{u2_update}, we have 
\beq\label{eq:L-term}
\begin{aligned}
&\big\langle \vL(\vz^{k+1}) - \vL(\vz),
    \vL(\vz^{k+1}) + C\vy^{k} + \vu_{2}^{k} \big\rangle \\
=&\big\langle \vL(\vz^{k+1}) - \vL(\vz), \vu_{2}^{k+1} \big\rangle
 + \big\langle \vL(\vz^{k+1}) - \vL(\vz),
    -C\vy^{k+1} + C\vy^{k} \big\rangle \\
&\quad
 + (1-\gamma_{2})\big\langle \vL(\vz^{k+1}) - \vL(\vz),
    \vL(\vz^{k+1}) + C\vy^{k+1} \big\rangle .
\end{aligned}
\eeq
Then \yx{plugging \eqref{eq:H-term} and \eqref{eq:L-term} into }
\eqref{(1)} \yx{gives} 
\beq\label{(2)}
\begin{aligned}
&F(\vz^{k+1}) - F(\vz)
 + \beta_{1}\big\langle \vH(\vz^{k+1}) - \vH(\vz), \vu_{1}^{k+1} \big\rangle
 + \beta_{2}\big\langle \vL(\vz^{k+1}) - \vL(\vz), \vu_{2}^{k+1} \big\rangle \\
\leq&-\beta_{1}\big\langle \vH(\vz^{k+1}) - \vH(\vz), B\vy^{k+1} - B\vy^{k} \big\rangle
       -\beta_{1}(1-\gamma_{1})\big\langle \vH(\vz^{k+1}) - \vH(\vz), \vH(\vz^{k+1}) - B\vy^{k+1} \big\rangle \\
&       -\beta_{2}\big\langle \vL(\vz^{k+1}) - \vL(\vz), -  C\vy^{k+1} + C\vy^{k} \big\rangle
       -\beta_{2}(1-\gamma_{2})\big\langle \vL(\vz^{k+1}) - \vL(\vz), \vL(\vz^{k+1}) + C\vy^{k+1} \big\rangle .
\end{aligned}
\eeq


\yx{Moreover, }by \yx{the} optimality condition \yx{for the problem in \eqref{y_update-2}} with respect to $\vy^{k+1}$, \yx{it holds}
\begin{equation*}
\mathbf{0} \in \partial g(\vy^{k+1})
 - \beta_{1} B^{\top}\big( \vH(\vz^{k+1}) - B\vy^{k+1} + \vu_{1}^{k} \big)
 + \beta_{2} C^{\top}\big( \vL(\vz^{k+1}) + C\vy^{k+1} + \vu_{2}^{k} \big).
\end{equation*}
Then by \yx{the} convexity of $g$, and \yx{using} \eqref{u1_update} and \eqref{u2_update}, \yx{we obtain for all $k\ge0$,}
\beq\label{(3)}
g(\vy^{k+1}) - g(\vy)
 - \beta_{1}\big\langle \tfrac{1}{\gamma_{1}}(\vu_{1}^{k+1} - \vu_{1}^{k}) + \vu_{1}^{k}, B\vy^{k+1} - B\vy \big\rangle
 - \beta_{2}\big\langle \tfrac{1}{\gamma_{2}}(\vu_{2}^{k+1} - \vu_{2}^{k}) + \vu_{2}^{k}, -C\vy^{k+1} + C\vy \big\rangle
 \leq 0.
\eeq
\yx{Now adding }
\eqref{(2)} and \eqref{(3)} \yx{yields}  
\beq 
\begin{aligned}\label{(4)}
&F(\vz^{k+1}) + g(\vy^{k+1}) - \big(F(\vz) + g(\vy)\big)
 + \beta_{1}\big\langle \vH(\vz^{k+1}) - \vH(\vz), \vu_{1}^{k+1} \big\rangle
 + \beta_{2}\big\langle \vL(\vz^{k+1}) - \vL(\vz), \vu_{2}^{k+1} \big\rangle \\
&
 - \beta_{1}\big\langle \tfrac{1}{\gamma_{1}}(\vu_{1}^{k+1} - \vu_{1}^{k}) + \vu_{1}^{k}, B\vy^{k+1} - B\vy \big\rangle
 - \beta_{2}\big\langle \tfrac{1}{\gamma_{2}}(\vu_{2}^{k+1} - \vu_{2}^{k}) + \vu_{2}^{k}, -C\vy^{k+1} + C\vy \big\rangle \\
\leq&
-\beta_{1}\big\langle \vH(\vz^{k+1}) - \vH(\vz), B\vy^{k+1} - B\vy^{k} \big\rangle
-\beta_{1}(1-\gamma_{1})\big\langle \vH(\vz^{k+1}) - \vH(\vz), \vH(\vz^{k+1}) - B\vy^{k+1} \big\rangle \\
&
-\beta_{2}\big\langle \vL(\vz^{k+1}) - \vL(\vz), -C\vy^{k+1} + C\vy^{k} \big\rangle
-\beta_{2}(1-\gamma_{2})\big\langle \vL(\vz^{k+1}) - \vL(\vz), \vL(\vz^{k+1}) + C\vy^{k+1} \big\rangle .
\end{aligned}
\eeq

\yx{Furthermore, by the feasibility of $(\vy,\vz)$, i.e.,} $\vH(\vz)-B\vy=\mathbf{0}$, $\vs\geq \mathbf{0}$ and $\vL(\vz)+C\vy=\mathbf{0}$,  then \eqref{(4)} implies
\begin{equation*}
\begin{aligned}
&F(\vz^{k+1}) + g(\vy^{k+1}) - \big(F(\vz) + g(\vy)\big) \\
&+\beta_{1}\big\langle \vH(\vz^{k+1}) - B\vy^{k+1}, \vu_{1}^{k+1} \big\rangle
 +\beta_{1}\Big(1-\tfrac{1}{\gamma_{1}}\Big)\big\langle \vu_{1}^{k+1} - \vu_{1}^{k}, B\vy^{k+1} - B\vy \big\rangle \\
&+\beta_{2}\big\langle \vL(\vz^{k+1}) + C\vy^{k+1}, \vu_{2}^{k+1} \big\rangle
 +\beta_{2}\Big(1-\tfrac{1}{\gamma_{2}}\Big)\big\langle \vu_{2}^{k+1} - \vu_{2}^{k}, -C\vy^{k+1} + C\vy \big\rangle \\
=&F(\vz^{k+1}) + g(\vy^{k+1}) - (F(\vz) + g(\vy)) \\
&+\beta_{1}\big\langle \vH(\vz^{k+1}) - B\vy^{k+1}, \vu_{1}^{k+1} \big\rangle
 +\beta_{1}\big\langle B\vy^{k+1} - B\vy, \vu_{1}^{k+1} \big\rangle \\
&-\beta_{1}\big\langle \tfrac{1}{\gamma_{1}}(\vu_{1}^{k+1} - \vu_{1}^{k}) + \vu_{1}^{k}, B\vy^{k+1} - B\vy \big\rangle \\
&+\beta_{2}\big\langle \vL(\vz^{k+1}) + C\vy^{k+1}, \vu_{2}^{k+1} \big\rangle
 +\beta_{2}\big\langle -C\vy^{k+1} + C\vy, \vu_{2}^{k+1} \big\rangle \\
&-\beta_{2}\big\langle \tfrac{1}{\gamma_{2}}(\vu_{2}^{k+1} - \vu_{2}^{k}) + \vu_{2}^{k},
        -C\vy^{k+1} + C\vy \big\rangle \\
\leq&
-\beta_{1}\big\langle \vH(\vz^{k+1}) - B\vy^{k+1}, B\vy^{k+1} - B\vy^{k} \big\rangle
 -\beta_{1}\big\langle B\vy^{k+1} - B\vy, B\vy^{k+1} - B\vy^{k} \big\rangle \\
&-\beta_{1}(1-\gamma_{1})\big\langle \vH(\vz^{k+1}) - \vH(\vz),
      \vH(\vz^{k+1}) - B\vy^{k+1} \big\rangle \\
&-\beta_{2}\big\langle \vL(\vz^{k+1}) + C\vy^{k+1},
      -C\vy^{k+1} + C\vy^{k} \big\rangle
 -\beta_{2}\big\langle -C\vy^{k+1} + C\vy,
      -C\vy^{k+1} + C\vy^{k} \big\rangle \\
&-\beta_{2}(1-\gamma_{2})\big\langle \vL(\vz^{k+1}) - \vL(\vz),
      \vL(\vz^{k+1}) + C\vy^{k+1} \big\rangle .
\end{aligned}
\end{equation*}
Applying \eqref{u1_update}  and \eqref{u2_update} to the above inequality and rearranging terms, we have for any $\vu_{1}$ and $ \vu_{2}$,
\beq
\begin{aligned}\label{(5)}
&F(\vz^{k+1}) + g(\vy^{k+1}) - \big(F(\vz) + g(\vy)\big)
 + \beta_{1}\big\langle \vH(\vz^{k+1}) - B\vy^{k+1}, \vu_{1} \big\rangle
 + \beta_{2}\big\langle \vL(\vz^{k+1}) + C\vy^{k+1}, \vu_{2} \big\rangle \\
&+ \tfrac{\beta_{1}}{\gamma_{1}}\big\langle \vu_{1}^{k+1} - \vu_{1}^{k}, \vu_{1}^{k+1} - \vu_{1} \big\rangle
 + \tfrac{\beta_{2}}{\gamma_{2}}\big\langle \vu_{2}^{k+1} - \vu_{2}^{k}, \vu_{2}^{k+1} - \vu_{2} \big\rangle \\
\leq&
-\tfrac{\beta_{1}}{\gamma_{1}}\big\langle \vu_{1}^{k+1} - \vu_{1}^{k}, B\vy^{k+1} - B\vy^{k} \big\rangle
 -\beta_{1}\big\langle B\vy^{k+1} - B\vy, B\vy^{k+1} - B\vy^{k} \big\rangle \\
&+\beta_{1}\big(1 - \tfrac{1}{\gamma_{1}}\big)\big\langle \vH(\vz^{k+1}) - \vH(\vz), \vu_{1}^{k+1} - \vu_{1}^{k} \big\rangle
 -\beta_{1}\big(1 - \tfrac{1}{\gamma_{1}}\big)\big\langle \vu_{1}^{k+1} - \vu_{1}^{k}, B\vy^{k+1} - B\vy \big\rangle \\
&-\tfrac{\beta_{2}}{\gamma_{2}}\big\langle \vu_{2}^{k+1} - \vu_{2}^{k}, -C\vy^{k+1} + C\vy^{k} \big\rangle
 -\beta_{2}\big\langle -C\vy^{k+1} + C\vy, -C\vy^{k+1} + C\vy^{k} \big\rangle \\
&+\beta_{2}\big(1 - \tfrac{1}{\gamma_{2}}\big)\big\langle \vL(\vz^{k+1}) - \vL(\vz), \vu_{2}^{k+1} - \vu_{2}^{k} \big\rangle
 -\beta_{2}\big(1 - \tfrac{1}{\gamma_{2}}\big)\big\langle \vu_{2}^{k+1} - \vu_{2}^{k}, -C\vy^{k+1} + C\vy \big\rangle \\
=&-\tfrac{\beta_{1}}{\gamma_{1}}\big\langle \vu_{1}^{k+1} - \vu_{1}^{k}, B\vy^{k+1} - B\vy^{k} \big\rangle
 -\beta_{1}\big\langle B\vy^{k+1} - B\vy, B\vy^{k+1} - B\vy^{k} \big\rangle \\
&+\tfrac{\beta_{1}}{\gamma_{1}}\big(1 - \tfrac{1}{\gamma_{1}}\big)\big\Vert \vu_{1}^{k+1} - \vu_{1}^{k} \big\Vert^{2}  \\
&-\tfrac{\beta_{2}}{\gamma_{2}}\big\langle \vu_{2}^{k+1} - \vu_{2}^{k}, -C\vy^{k+1} + C\vy^{k} \big\rangle
 -\beta_{2}\big\langle -C\vy^{k+1} + C\vy, -C\vy^{k+1} + C\vy^{k} \big\rangle\\
&+\tfrac{\beta_{2}}{\gamma_{2}}\big(1 - \tfrac{1}{\gamma_{2}}\big)\big\Vert \vu_{2}^{k+1} - \vu_{2}^{k} \big\Vert^{2}.
\end{aligned}
\eeq

Meanwhile, we have from \eqref{(3)} that \yx{for all $k\ge1$,}
\begin{equation*}
g(\vy^{k+1}) - g(\vy^{k})
 - \beta_{1}\big\langle \tfrac{1}{\gamma_{1}}(\vu_{1}^{k+1} - \vu_{1}^{k}) + \vu_{1}^{k},
    B\vy^{k+1} - B\vy^{k} \big\rangle
 - \beta_{2}\big\langle \tfrac{1}{\gamma_{2}}(\vu_{2}^{k+1} - \vu_{2}^{k}) + \vu_{2}^{k},
    -C\vy^{k+1} + C\vy^{k} \big\rangle
 \leq 0,
\end{equation*}
and
\begin{equation*}
g(\vy^{k}) - g(\vy^{k+1})
 - \beta_{1}\big\langle \tfrac{1}{\gamma_{1}}(\vu_{1}^{k} - \vu_{1}^{k-1}) + \vu_{1}^{k-1},
    B\vy^{k} - B\vy^{k+1} \big\rangle
 - \beta_{2}\big\langle \tfrac{1}{\gamma_{2}}(\vu_{2}^{k} - \vu_{2}^{k-1}) + \vu_{2}^{k-1},
    -C\vy^{k} + C\vy^{k+1} \big\rangle
 \leq 0.  
\end{equation*}
Adding the above two inequalities and using Young's inequality gives that for all $k\ge1$,
\beq
\begin{aligned}\label{(6)}
&-\tfrac{\beta_{1}}{\gamma_{1}}\big\langle \vu_{1}^{k+1} - \vu_{1}^{k}, B\vy^{k+1} - B\vy^{k}\big\rangle
 -\tfrac{\beta_{2}}{\gamma_{2}}\big\langle  \vu_{2}^{k+1} - \vu_{2}^{k}, -C\vy^{k+1} + C\vy^{k} \big\rangle \\
\leq&
\beta_{1}\big\langle B\vy^{k+1} - B\vy^{k}, (1 - \tfrac{1}{\gamma_{1}})(\vu_{1}^{k} - \vu_{1}^{k-1}) \big\rangle
+\beta_{2}\big\langle -C\vy^{k+1} + C\vy^{k}, (1 - \tfrac{1}{\gamma_{2}})(\vu_{2}^{k} - \vu_{2}^{k-1}) \big\rangle \\
\le & \tfrac{\beta_1}{2}\lVert B\vy^{k+1} - B\vy^{k} \rVert^{2}
     + \tfrac{\beta_1}{2}(1 - \tfrac{1}{\gamma_{1}})^{2}\lVert \vu_{1}^{k} - \vu_{1}^{k-1} \rVert^{2}  + \tfrac{\beta_2}{2}\lVert -C\vy^{k+1} + C\vy^{k} \rVert^{2}
     + \tfrac{\beta_2}{2}(1 - \tfrac{1}{\gamma_{2}})^{2}\lVert \vu_{2}^{k} - \vu_{2}^{k-1} \rVert^{2}     .
\end{aligned}
\eeq
Now plugging \eqref{(6)} 
into \eqref{(5)} and using the equality $\va^\top \vb = \frac{1}{2}\big(\|\va\|^2 + \|\vb\|^2 - \|\va-\vb\|^2\big)$, we obtain the desired result.
\end{proof}

The next lemma can be proved by using the convexity of $f,g$ and $h_j$ for each $j=1,\ldots,m_1$ and the KKT conditions, cf. \cite[Eqn.~(2.2)]{xu2021iteration}.
\begin{lemma}\label{KKT_lemma}
Under Assumption \ref{assumption_2}, let  $(\vx^{*},\vy^{*},\vu_{1}^{*},\vu_{2}^{*})$ be a point such that \eqref{KKT condition} hold, then for any $(\vx,\vy)$, 
it holds
\beq\label{KKT}
f(\vx)+g(\vy)-\big(f(\vx^{*})+g(\vy^{*})\big)+\beta_{1}\big\langle \vh(\vx)-B\vy,\vu_{1}^{*}\big\rangle +\beta_{2}\big\langle A\vx+C\vy-\vd,\vu_{2}^{*}\big\rangle \geq 0.\eeq
\end{lemma}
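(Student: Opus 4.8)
The plan is to derive \eqref{KKT} directly from the two subgradient inclusions in \eqref{KKT condition}, combined with the subgradient (convexity) inequalities for $f$, $g$, and the component functions $h_j$, and then to clear the starred terms using complementary slackness and primal feasibility. First I would unpack the stationarity conditions: the first inclusion in \eqref{KKT condition} guarantees a subgradient $\vxi_f\in\partial f(\vx^*)$ and a selection $G_{\vh}(\vx^*)=[\vz_1,\ldots,\vz_{m_1}]^\top\in\partial\vh(\vx^*)$ (so each $\vz_j\in\partial h_j(\vx^*)$) with $\vxi_f=-\beta_1 G_{\vh}(\vx^*)^\top\vu_1^*-\beta_2 A^\top\vu_2^*$; the second inclusion gives $\vxi_g\in\partial g(\vy^*)$ with $\vxi_g=\beta_1 B^\top\vu_1^*-\beta_2 C^\top\vu_2^*$.

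Next I would apply convexity. For $f$ and $g$ this reads $f(\vx)-f(\vx^*)\ge\langle\vxi_f,\vx-\vx^*\rangle$ and $g(\vy)-g(\vy^*)\ge\langle\vxi_g,\vy-\vy^*\rangle$. The key step for the nonlinear block is to bound $\langle G_{\vh}(\vx^*)^\top\vu_1^*,\vx-\vx^*\rangle=\sum_j(\vu_1^*)_j\langle\vz_j,\vx-\vx^*\rangle$: since $\vu_1^*\ge\mathbf{0}$, weighting each componentwise inequality $h_j(\vx)-h_j(\vx^*)\ge\langle\vz_j,\vx-\vx^*\rangle$ by $(\vu_1^*)_j\ge0$ and summing yields $\langle G_{\vh}(\vx^*)^\top\vu_1^*,\vx-\vx^*\rangle\le\langle\vh(\vx)-\vh(\vx^*),\vu_1^*\rangle$. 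Substituting the stationarity expressions for $\vxi_f$ and $\vxi_g$ and using this bound (the sign flips because $\beta_1>0$) gives, after adding the $f$- and $g$-inequalities, the single lower bound
\[
f(\vx)+g(\vy)-\big(f(\vx^*)+g(\vy^*)\big)\ge-\beta_1\langle\vh(\vx)-\vh(\vx^*),\vu_1^*\rangle+\beta_1\langle\vu_1^*,B\vy-B\vy^*\rangle-\beta_2\langle\vu_2^*,(A\vx+C\vy)-(A\vx^*+C\vy^*)\rangle.
\]

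Finally I would simplify the right-hand side using the primal KKT relations. Grouping the $\beta_1$-terms gives $-\beta_1\langle(\vh(\vx)-B\vy)-(\vh(\vx^*)-B\vy^*),\vu_1^*\rangle$, and complementary slackness $\vu_1^*\odot(\vh(\vx^*)-B\vy^*)=\mathbf{0}$ annihilates the starred inner product, leaving $-\beta_1\langle\vh(\vx)-B\vy,\vu_1^*\rangle$. Grouping the $\beta_2$-terms gives $-\beta_2\langle(A\vx+C\vy)-(A\vx^*+C\vy^*),\vu_2^*\rangle$, and feasibility $A\vx^*+C\vy^*=\vd$ turns this into $-\beta_2\langle A\vx+C\vy-\vd,\vu_2^*\rangle$. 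Moving both terms to the left yields exactly \eqref{KKT}.

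I expect the main obstacle to be the nonsmooth nonlinear block: one must ensure that the single selection $G_{\vh}(\vx^*)$ certified by the stationarity inclusion is the \emph{same} selection used in the convexity estimate, and that the nonnegativity $\vu_1^*\ge\mathbf{0}$ is invoked precisely when aggregating the componentwise subgradient inequalities into $\langle G_{\vh}(\vx^*)^\top\vu_1^*,\vx-\vx^*\rangle\le\langle\vh(\vx)-\vh(\vx^*),\vu_1^*\rangle$, since a careless sign there would reverse the final inequality. The remaining manipulations are routine bookkeeping of the $\beta_1$- and $\beta_2$-terms.
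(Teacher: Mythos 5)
Your proof is correct and is exactly the argument the paper has in mind: the paper omits the details, stating only that the lemma follows from convexity of $f$, $g$, and each $h_j$ together with the KKT conditions (deferring to the cited reference), and your write-up — stationarity selections $\vxi_f$, $\vxi_g$, $G_{\vh}(\vx^*)$, the multiplier-weighted subgradient inequalities for the $h_j$ using $\vu_1^*\ge\mathbf{0}$, then complementary slackness and primal feasibility to clear the starred terms — is precisely that argument filled in, with the sign-sensitive aggregation step handled correctly.
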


\yx{Now we are ready to show the convergence result to optimality.}
\begin{theorem}[Convergence to Optimality]\label{thm_convergence}
Under Assumption 
\ref{assumption_2},  let $\{\vy^{k},\vz^{k},\vu_{1}^{k},\vu_{2}^{k}\}_{k\in \mathbb{N}^{+}}$ be the sequence generated by Algorithm \ref{alg:nl-admm} with $\beta_{1},\beta_{2}>0 $ and $\gamma_{1},\gamma_{2}\in (0,\frac{1+\sqrt{5}}{2})$. 
Then 
\begin{align*}
&\lim_{k\to\infty}  \big\|[\vh(\vx^{k})-B\vy^{k}]_{+}\big\| + \big\| A\vx^{k}+C\vy^{k}-\vd\big\|= 0,  \\
&\lim_{k\to\infty}f(\vx^{k})+g(\vy^{k})-\big(f(\vx^{*})+g(\vy^{*})\big)= 0. 
\end{align*}
\end{theorem}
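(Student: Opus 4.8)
The plan is to instantiate the core inequality \eqref{(9_0)} at the KKT point and then fold its telescoping terms into a single Lyapunov (merit) function whose monotone decrease is forced exactly by the range $\gamma_1,\gamma_2\in(0,\tfrac{1+\sqrt5}{2})$. First I would set $\vz^{*}=[\vx^{*};\vs^{*}]$ with $\vs^{*}=B\vy^{*}-\vh(\vx^{*})\ge\mathbf{0}$; by the KKT conditions this gives $\vH(\vz^{*})-B\vy^{*}=\mathbf{0}$ and $\vL(\vz^{*})+C\vy^{*}=\mathbf{0}$, so $(\vy^{*},\vz^{*})$ is feasible, and I apply \eqref{(9_0)} with this choice and $(\vu_1,\vu_2)=(\vu_1^{*},\vu_2^{*})$. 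Using $\vs^{k+1}\ge\mathbf{0}$ (hence $F(\vz^{k+1})=f(\vx^{k+1})$) and $\vH(\vz^{k+1})-B\vy^{k+1}=\vh(\vx^{k+1})+\vs^{k+1}-B\vy^{k+1}$, the first line of \eqref{(9_0)} equals the left-hand side of \eqref{KKT} at $(\vx^{k+1},\vy^{k+1})$ plus $\beta_1\langle\vs^{k+1},\vu_1^{*}\rangle$; both are nonnegative, so this quantity, which I call $\Delta^{k}$, satisfies $\Delta^{k}\ge0$.

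Next I would move the $\tfrac{\beta_i}{\gamma_i}(1-\tfrac1{\gamma_i})\|\vu_i^{k+1}-\vu_i^{k}\|^2$ terms to the left, turning the coefficient of $\|\vu_i^{k+1}-\vu_i^{k}\|^2$ into $\tfrac{\beta_i}{\gamma_i}(\tfrac1{\gamma_i}-\tfrac12)$. Setting $c_i=\tfrac{\beta_i}{2}(1-\tfrac1{\gamma_i})^2$ and
\[
\Phi^{k}:=\tfrac{\beta_1}{2\gamma_1}\|\vu_1^{k}-\vu_1^{*}\|^2+\tfrac{\beta_2}{2\gamma_2}\|\vu_2^{k}-\vu_2^{*}\|^2+\tfrac{\beta_1}{2}\|B\vy^{k}-B\vy^{*}\|^2+\tfrac{\beta_2}{2}\|C\vy^{k}-C\vy^{*}\|^2+\textstyle\sum_{i}c_i\|\vu_i^{k}-\vu_i^{k-1}\|^2,
\]
the inequality (for $k\ge1$, so that \eqref{(6)} applies) rearranges to $\Phi^{k+1}+\Delta^{k}+R^{k}\le\Phi^{k}$ with $R^{k}=\sum_{i}\big[\tfrac{\beta_i}{\gamma_i}(\tfrac1{\gamma_i}-\tfrac12)-c_i\big]\|\vu_i^{k+1}-\vu_i^{k}\|^2$. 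I expect this to be the main obstacle: one must verify $\tfrac{\beta_i}{\gamma_i}(\tfrac1{\gamma_i}-\tfrac12)\ge c_i$, which after the substitution $t=1/\gamma_i$ reduces to $t^2+t-1\ge0$, i.e.\ exactly $\gamma_i\le\tfrac{1+\sqrt5}{2}$. The strict range makes each bracket strictly positive, so $R^{k}$ controls a positive multiple of $\|\vu_i^{k+1}-\vu_i^{k}\|^2$.

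From $\Delta^{k},R^{k}\ge0$ the relation $\Phi^{k+1}\le\Phi^{k}$ shows $\{\Phi^{k}\}$ is non-increasing and bounded below by $0$, and telescoping $\Phi^{k+1}+\Delta^{k}+R^{k}\le\Phi^{k}$ gives $\sum_{k}(\Delta^{k}+R^{k})\le\Phi^{1}<\infty$. Hence $\Delta^{k}\to0$, $\|\vu_1^{k+1}-\vu_1^{k}\|\to0$, and $\|\vu_2^{k+1}-\vu_2^{k}\|\to0$. The dual updates \eqref{u1_update}--\eqref{u2_update} then yield $\vH(\vz^{k+1})-B\vy^{k+1}=\tfrac1{\gamma_1}(\vu_1^{k+1}-\vu_1^{k})\to\mathbf{0}$ and $\vL(\vz^{k+1})+C\vy^{k+1}=\tfrac1{\gamma_2}(\vu_2^{k+1}-\vu_2^{k})\to\mathbf{0}$. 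Since $\vs^{k+1}\ge\mathbf{0}$ implies $\|[\vh(\vx^{k+1})-B\vy^{k+1}]_+\|\le\|\vh(\vx^{k+1})+\vs^{k+1}-B\vy^{k+1}\|=\|\vH(\vz^{k+1})-B\vy^{k+1}\|$ and $\vL(\vz^{k+1})+C\vy^{k+1}=A\vx^{k+1}+C\vy^{k+1}-\vd$, the feasibility residuals vanish, giving the first limit.

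For the objective, I would expand $\Delta^{k}=\big(f(\vx^{k+1})+g(\vy^{k+1})-(f(\vx^{*})+g(\vy^{*}))\big)+\beta_1\langle\vH(\vz^{k+1})-B\vy^{k+1},\vu_1^{*}\rangle+\beta_2\langle\vL(\vz^{k+1})+C\vy^{k+1},\vu_2^{*}\rangle$. The two inner products tend to $0$ because the residuals vanish while $\vu_1^{*},\vu_2^{*}$ are fixed, and $\Delta^{k}\to0$, so $f(\vx^{k+1})+g(\vy^{k+1})-(f(\vx^{*})+g(\vy^{*}))\to0$, which is the second limit.
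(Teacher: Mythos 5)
Your proof is correct. The first half coincides with the paper's argument: you instantiate \eqref{(9_0)} at the KKT point (checking feasibility of $(\vy^*,\vz^*)$ with $\vs^*=B\vy^*-\vh(\vx^*)$), use Lemma~\ref{KKT_lemma} together with $\langle\vs^{k+1},\vu_1^*\rangle\ge0$ to get $\Delta^k\ge0$, verify that $\gamma_i<\tfrac{1+\sqrt5}{2}$ is exactly the condition $t^2+t-1>0$ with $t=1/\gamma_i$ (the same computation behind the paper's constants $c_1,c_2$ in \eqref{eq:def-c12}), and read off feasibility convergence from the dual updates. The second half is genuinely different and more elementary. The paper keeps the delayed terms $\|\vu_i^{k}-\vu_i^{k-1}\|^2$ on the right-hand side, sums over $k$, and compares coefficients to obtain the bound $c_3$ in \eqref{eq:def-c3}; to handle the objective value it then derives \eqref{(10_1)} via Cauchy--Schwarz, invokes the Robbins--Siegmund theorem \cite{robbins1971convergence} to get convergence of the Lyapunov sequence $V^k$, and finishes with a separate $\limsup$ (from \eqref{(10_1)}) and $\liminf$ (from \eqref{KKT}) argument. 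You instead absorb the delayed terms into the augmented Lyapunov function $\Phi^k=V^k(\vu_1^*,\vu_2^*,\vy^*)+\sum_i c_i\|\vu_i^k-\vu_i^{k-1}\|^2$, which turns the recursion into a clean one-step descent $\Phi^{k+1}+\Delta^k+R^k\le\Phi^k$ for $k\ge1$; telescoping gives summability of the nonnegative terms, hence $\Delta^k\to0$, and since the objective gap equals $\Delta^k$ minus two residual inner products that vanish, both limits follow at once. Your route avoids any supermartingale-type convergence theorem and the $\limsup$/$\liminf$ split; the paper's route has the advantage that its intermediate constant $c_3$ is produced in exactly the form reused in the constant $c_4$ of the subsequent rate theorem. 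One small point in your favor: you state $\big\|[\vh(\vx^{k+1})-B\vy^{k+1}]_+\big\|\le\big\|\vh(\vx^{k+1})+\vs^{k+1}-B\vy^{k+1}\big\|$ as an inequality (which is what holds and what is needed, since $\vs^{k+1}\ge\mathbf{0}$ gives the componentwise bound), whereas the paper asserts it as an equality in \eqref{feasibility_convergence}, which need not hold since $\vh(\vx^{k+1})+\vs^{k+1}-B\vy^{k+1}$ can have negative components.
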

\begin{proof}
For each $k$, define the Lyapunov function as
\begin{equation*}
V^{k}(\vu_{1},\vu_{2},\vy):=\frac{\beta_{1}}{2\gamma_{1}}\lVert \vu_{1}^{k}-\vu_{1}\rVert^{2}+\frac{\beta_{2}}{2\gamma_{2}}\lVert \vu_{2}^{k}-\vu_{2}\rVert^{2}+\frac{\beta_{1}}{2}\lVert B\vy^{k}-B\vy\rVert^{2}+\frac{\beta_{2}}{2}\lVert -C\vy^{k}+C\vy\rVert^{2}.    
\end{equation*}
For $\vs,\vs^{k+1}\geq \mathbf{0}$, it holds $\iota_{+}(\vs^{k+1})=\iota_{+}(\vs)=0$, and when $\vu_{1}\geq \mathbf{0}$, it holds $\langle \vs^{k+1}, \vu_1\rangle \ge 0$. Hence, \eqref{(9_0)} implies that for any $\vu_{1}\geq \mathbf{0}$,
\beq\label{(10)}
\begin{aligned}
&f(\vx^{k+1}) + g(\vy^{k+1})
 - \bigl( f(\vx) + g(\vy) \bigr)
 + \beta_{1}\big\langle \vh(\vx^{k+1}) - B\vy^{k+1}, \vu_{1} \big\rangle
 + \beta_{2}\big\langle A\vx^{k+1} + C\vy^{k+1} - \vd, \vu_{2} \big\rangle \\
&+ \tfrac{\beta_{1}(2-\gamma_{1})}{2\gamma_{1}^{2}}\lVert \vu_{1}^{k+1} - \vu_{1}^{k} \rVert^{2}
 + \tfrac{\beta_{2}(2-\gamma_{2})}{2\gamma_{2}^{2}}\lVert \vu_{2}^{k+1} - \vu_{2}^{k} \rVert^{2}
 + V^{k+1}\yx{(\vu_{1},\vu_{2},\vy)} \\
\leq&
V^{k}\yx{(\vu_{1},\vu_{2},\vy)}
 + \tfrac{\beta_{1}}{2}\bigl(1 - \tfrac{1}{\gamma_{1}}\bigr)^{2}
   \lVert \vu_{1}^{k} - \vu_{1}^{k-1} \rVert^{2}
 + \tfrac{\beta_{2}}{2}\bigl(1 - \tfrac{1}{\gamma_{2}}\bigr)^{2}
   \lVert \vu_{2}^{k} - \vu_{2}^{k-1} \rVert^{2}.
\end{aligned}
\eeq
Letting $(\vx,\vy,\vu_1,\vu_2)=(\vx^{*},\vy^{*},\vu_{1}^{*},\vu_{2}^{*})$ in \eqref{(10)}, summing it from $k=1,\cdots, K$ for some $K\in\mathbb{N}^{+}$ and rearranging terms, we obtain
\beq\label{(10_0)}
\begin{aligned}
&\sum_{k=1}^{K} \Bigl(
    f(\vx^{k+1}) + g(\vy^{k+1}) - \bigl(f(\vx^{*}) + g(\vy^{*})\bigr)
    + \beta_{1}\big\langle \vh(\vx^{k+1}) - B\vy^{k+1}, \vu_{1}^{*} \big\rangle
    + \beta_{2}\big\langle A\vx^{k+1} + C\vy^{k+1} - \vd, \vu_{2}^{*} \big\rangle
\Bigr) \\
&+\beta_{1}\Bigl(\tfrac{2-\gamma_{1}}{2\gamma_{1}^{2}} - \tfrac{1}{2}\bigl(1-\tfrac{1}{\gamma_{1}}\bigr)^{2}\Bigr)
  \sum_{k=1}^{K-1}\lVert \vu_{1}^{k+1} - \vu_{1}^{k} \rVert^{2}
 +\beta_{2}\Bigl(\tfrac{2-\gamma_{2}}{2\gamma_{2}^{2}} - \tfrac{1}{2}\bigl(1-\tfrac{1}{\gamma_{2}}\bigr)^{2}\Bigr)
  \sum_{k=1}^{K-1}\lVert \vu_{2}^{k+1} - \vu_{2}^{k} \rVert^{2} \\
&+ \tfrac{\beta_{1}(2-\gamma_{1})}{2\gamma_{1}^{2}}\lVert \vu_{1}^{K+1} - \vu_{1}^{K} \rVert^{2}
 + \tfrac{\beta_{2}(2-\gamma_{2})}{2\gamma_{2}^{2}}\lVert \vu_{2}^{K+1} - \vu_{2}^{K} \rVert^{2}
 + V^{K+1}\yx{(\vu_{1}^{*},\vu_{2}^{*},\vy^{*})} \\
\leq\;&
V^{1}(\vu_{1}^{*},\vu_{2}^{*},\vy^{*})
 + \tfrac{\beta_{1}}{2}\bigl(1-\tfrac{1}{\gamma_{1}}\bigr)^{2}\lVert \vu_{1}^{1} - \vu_{1}^{0} \rVert^{2}
 + \tfrac{\beta_{2}}{2}\bigl(1-\tfrac{1}{\gamma_{2}}\bigr)^{2}\lVert \vu_{2}^{1} - \vu_{2}^{0} \rVert^{2}.
\end{aligned}
\eeq

Since $\gamma_{1},\gamma_{2}\in(0,\frac{1+\sqrt{5}}{2})$, it is straightforward to verify
\begin{equation*}
\frac{2-\gamma_{i}}{2\gamma_{i}^{2}}
 - \frac{1}{2}\Bigl(1 - \frac{1}{\gamma_{i}}\Bigr)^{2} > 0,
\qquad
\frac{2-\gamma_{i}}{2\gamma_{i}^{2}} > 0,
\quad
i = 1, 2.
\end{equation*}
Let
\begin{equation}\label{eq:def-c12}
c_{1} := \max_{i=1,2} \,\frac{\beta_{i}}{2}\Bigl(1 - \tfrac{1}{\gamma_{i}}\Bigr)^{2},
\qquad
c_{2} := \min_{i=1,2} \,\beta_{i}\Bigl(\frac{2-\gamma_{i}}{2\gamma_{i}^{2}}
             - \tfrac{1}{2}\Bigl(1 - \tfrac{1}{\gamma_{i}}\Bigr)^{2}\Bigr).
\end{equation}
Then from \eqref{KKT} and \eqref{(10_0)}, we derive
\begin{equation}\label{eq:def-c3}
\sum_{k=1}^{K-1}\bigl( \lVert \vu_{1}^{k+1} - \vu_{1}^{k} \rVert^{2}
                      + \lVert \vu_{2}^{k+1} - \vu_{2}^{k} \rVert^{2} \bigr)
\leq
\frac{
  V^{1}(\vu_{1}^{*},\vu_{2}^{*},\vy^{*})
  + c_{1}\bigl( \lVert \vu_{1}^{1} - \vu_{1}^{0} \rVert^{2}
               + \lVert \vu_{2}^{1} - \vu_{2}^{0} \rVert^{2} \bigr)
}{c_{2}} =: c_3.
\end{equation}
Letting $K\rightarrow \infty$, we get
\beq\label{u_converge}
\sum_{k=1}^{\infty}\lVert \vu_{1}^{k+1}-\vu_{1}^{k}\rVert^{2}+\lVert \vu_{2}^{k+1}-\vu_{2}^{k}\rVert^{2}\leq c_{3} < \infty,
\eeq
which together with \eqref{u1_update-0}, \eqref{u2_update-0}, and $\vs^{k+1}\geq \mathbf{0}$ implies that as $k\to \infty$, 
\beq\label{feasibility_convergence}
\big\|[\vh(\vx^{k+1})-B\vy^{k+1}]_{+}\big\|=\big\lVert \vh(\vx^{k+1}) + \vs^{k+1} - B\vy^{k+1} \big\rVert\rightarrow 0,\quad \big\|A\vx^{k+1}+C\vy^{k+1}-\vd\big\|\rightarrow 0. 
\eeq
This proves the convergence of constraint violation.

To show the convergence of the objective value, we derive from \eqref{(9_0)} that 
\begin{equation*}
\begin{aligned}
&f(\vx^{k+1}) + \iota_{+}(\vs^{k+1}) + g(\vy^{k+1})
 - \bigl( f(\vx) + \iota_{+}(\vs) + g(\vy) \bigr) \\
&+ \beta_{1}\big\langle \vh(\vx^{k+1}) + \vs^{k+1} - B\vy^{k+1}, \vu_{1} \big\rangle
 + \beta_{2}\big\langle A\vx^{k+1} + C\vy^{k+1} - \vd, \vu_{2} \big\rangle \\
&+ \tfrac{\beta_{1}(2-\gamma_{1})}{2\gamma_{1}^{2}}\lVert \vu_{1}^{k+1} - \vu_{1}^{k} \rVert^{2}
 + \tfrac{\beta_{2}(2-\gamma_{2})}{2\gamma_{2}^{2}}\lVert \vu_{2}^{k+1} - \vu_{2}^{k} \rVert^{2}
 + V^{k+1}\yx{(\vu_{1},\vu_{2},\vy)} \\
\leq
&V^{k}\yx{(\vu_{1},\vu_{2},\vy)}
 + \tfrac{\beta_{1}}{2}\bigl(1 - \tfrac{1}{\gamma_{1}}\bigr)^{2}\lVert \vu_{1}^{k} - \vu_{1}^{k-1} \rVert^{2}
 + \tfrac{\beta_{2}}{2}\bigl(1 - \tfrac{1}{\gamma_{2}}\bigr)^{2}\lVert \vu_{2}^{k} - \vu_{2}^{k-1} \rVert^{2}.
\end{aligned}  
\end{equation*}
Letting $(\vx, \vy, \vu_1, \vu_2)=(\vx^{*},\vy^{*},\vu_{1}^{*}, \vu_{2}^{*})$ and $\vs=\vs^{*}=B\vy^{*}-\vh(\vx^{*})\geq \mathbf{0}$ in the above inequality, we have
\begin{equation*}
\begin{aligned}
&f(\vx^{k+1}) 
+ g(\vy^{k+1})
 - \bigl( f(\vx^{*}) 
 + g(\vy^{*}) \bigr) \\
&+ \beta_{1}\big\langle \vh(\vx^{k+1}) + \vs^{k+1} - B\vy^{k+1}, \vu_{1}^{*} \big\rangle
 + \beta_{2}\big\langle A\vx^{k+1} + C\vy^{k+1} - \vd, \vu_{2}^{*} \big\rangle \\
\leq\,&
V^{k}(\vu_{1}^{*}, \vu_{2}^{*}, \vy^{*})
 - V^{k+1}(\vu_{1}^{*}, \vu_{2}^{*}, \vy^{*})
 + \tfrac{\beta_{1}}{2}\bigl(1 - \tfrac{1}{\gamma_{1}}\bigr)^{2}
   \lVert \vu_{1}^{k} - \vu_{1}^{k-1} \rVert^{2}
 + \tfrac{\beta_{2}}{2}\bigl(1 - \tfrac{1}{\gamma_{2}}\bigr)^{2}
   \lVert \vu_{2}^{k} - \vu_{2}^{k-1} \rVert^{2},
\end{aligned}    
\end{equation*}
which together with Cauchy-Schwarz inequality implies
\beq\label{(10_1)}
\begin{aligned}
&f(\vx^{k+1}) + g(\vy^{k+1}) - \bigl( f(\vx^{*}) + g(\vy^{*}) \bigr) \\
\leq&
\beta_{1}\lVert \vu_{1}^{*} \rVert \big\lVert \vh(\vx^{k+1}) + \vs^{k+1} - B\vy^{k+1} \big\rVert
+\beta_{2}\lVert \vu_{2}^{*} \rVert \big\lVert A\vx^{k+1} + C\vy^{k+1} - \vd \big\rVert \\
&+ V^{k}(\vu_{1}^{*}, \vu_{2}^{*}, \vy^{*})
  - V^{k+1}(\vu_{1}^{*}, \vu_{2}^{*}, \vy^{*})
  + \tfrac{\beta_{1}}{2}\bigl( 1 - \tfrac{1}{\gamma_{1}} \bigr)^{2}
    \lVert \vu_{1}^{k} - \vu_{1}^{k-1} \rVert^{2} + \tfrac{\beta_{2}}{2}\bigl( 1 - \tfrac{1}{\gamma_{2}} \bigr)^{2}
    \lVert \vu_{2}^{k} - \vu_{2}^{k-1} \rVert^{2}.
\end{aligned}
\eeq

From \cite[Theorem 1]{robbins1971convergence}, we have 
the convergence of the Lyapunov sequence $\{V^{k}(\vu_{1}^{*},\vu_{2}^{*},\vy^{*})\}_{k\in \mathbb{N}^{+}}$. 
Thus, \eqref{u_converge}, \eqref{feasibility_convergence}, and \eqref{(10_1)} indicate
\begin{equation*}
\limsup_{k\to\infty}f(\vx^{k+1})+g(\vy^{k+1})-\big(f(\vx^{*})+g(\vy^{*})\big)\leq 0.  
\end{equation*}
On the other hand, by 
\eqref{KKT}, it follows
\begin{equation*}
\begin{aligned}
f(\vx^{k+1})+g(\vy^{k+1})-\big(f(\vx^{*})+g(\vy^{*})\big)&\geq -\beta_{1}\big\langle \vh(\vx^{k+1})-B\vy^{k+1},\vu_{1}^{*}\big\rangle -\beta_{2}\big\langle A\vx^{k+1}+C\vy^{k+1}-\vd,\vu_{2}^{*}\big\rangle \\
&\geq -\beta_{1}\lVert \vu_{1}^{*}\rVert \big\| [\vh(\vx^{k+1})
-B\vy^{k+1}]_+\big\|-\beta_{2}\lVert \vu_{2}^{*}\rVert \big\| A\vx^{k+1}+C\vy^{k+1}-\vd\big\|,
\end{aligned}  
\end{equation*}
which together with \eqref{feasibility_convergence} indicates
$$\liminf_{k\to\infty}f(\vx^{k+1})+g(\vy^{k+1})-\big(f(\vx^{*})+g(\vy^{*})\big)\ge 0.$$
Hence, we have the convergence of the objective value and complete the proof.
\end{proof}

\begin{theorem}[Convergence Rate]
Under Assumption 
\ref{assumption_2},  let $\{\vy^{k},\vz^{k},\vu_{1}^{k},\vu_{2}^{k}\}_{k\in \mathbb{N}^{+}}$ be the sequence generated by Algorithm \ref{alg:nl-admm} with $\beta_{1},\beta_{2}>0 $ and $\gamma_{1},\gamma_{2}\in (0,\frac{1+\sqrt{5}}{2})$. 
Define
\begin{align*}
&\bar{\vx}^{K}:=\frac{1}{K}\sum_{k=1}^{K}\vx^{k+1}, \quad \bar{\vy}^{K}:=\frac{1}{K}\sum_{k=1}^{K}\vy^{k+1},\quad 
\bar{\vr}_{1}^{K}:=h(\bar{\vx}^{K})-B\bar{\vy}^{K}, \quad \bar{\vr}_{2}^{K}:=A\bar{\vx}^{K}+C\bar{\vy}^{K}-\vd;\\    
& \alpha_{1}(\vu_{1}) :=\frac{\beta_{1}}{2\gamma_{1}}\lVert\vu_{1}^{1}-\vu_{1}\rVert^{2}, \quad \alpha_{2}(\vu_{2}):=\frac{\beta_{2}}{2\gamma_{2}}\lVert\vu_{2}^{1}-\vu_{2}\rVert^{2};\\   
& c_{4}:= \frac{\beta_{1}}{2}\lVert B\vy^{1}-B\vy^{*}\rVert^{2}+\frac{\beta_{2}}{2}\lVert -C\vy^{1}+C\vy^{*}\rVert^{2}
+c_1 c_3,  
\end{align*}
\yx{where $c_1$ and $c_3$ are defined in \eqref{eq:def-c12} and \eqref{eq:def-c3}.}
Then for any $\rho_{1}>\lVert \vu_{1}^{*}\rVert, \rho_{2}>\lVert \vu_{2}^{*}\rVert$, it holds 
for any $K\geq 1$, 
\begin{equation*}
\lVert[\bar{\vr}_{1}^{K}]_{+}\rVert+\lVert\bar{\vr}_{2}^{K}\rVert\leq \frac{M(\rho_{1},\rho_{2})}{\min_{i=1,2}\beta_{i}\big(\rho_{i}-\lVert \vu_{i}^{*}\rVert\big)K}, 
\end{equation*}
and
\begin{equation*}
-\frac{\yx{\max_{i=1,2}\beta_{i}\lVert \vu_{i}^{*}\rVert}}{\min_{i=1,2}\beta_{i}\big(\rho_{i}-\lVert \vu_{i}^{*}\rVert\big)}\frac{M(\rho_{1},\rho_{2})}{K}\leq f(\bar{\vx}^{K})+g(\bar{\vy}^{K})-\big(f(\vx^{*})+g(\vy^{*})\big)\leq \frac{M(\rho_{1},\rho_{2})}{K},
\end{equation*}
where 
\begin{equation*}
M(\rho_{1},\rho_{2}):=\sup_{\lVert \tilde{\vu}_{1}\rVert\leq \rho_{1}}\alpha_{1}(\tilde{\vu}_{1})+\sup_{\lVert \tilde{\vu}_{2}\rVert\leq \rho_{2}}\alpha_{2}(\tilde{\vu}_{2})+c_{4} < \infty.
\end{equation*}
\end{theorem}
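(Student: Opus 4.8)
The plan is to run the standard ergodic-averaging argument on top of the per-iteration estimate \eqref{(10)}, which already holds for any feasible $(\vx,\vy)$ and any $\vu_{1}\ge\mathbf0$, $\vu_{2}$. First I would specialize \eqref{(10)} to the feasible KKT point $(\vx,\vy)=(\vx^{*},\vy^{*})$ while keeping the multipliers $(\vu_{1},\vu_{2})$ with $\vu_{1}\ge\mathbf0$ \emph{free}, and sum over $k=1,\dots,K$. The Lyapunov terms $V^{k}(\vu_{1},\vu_{2},\vy^{*})$ telescope, leaving $V^{1}(\vu_{1},\vu_{2},\vy^{*})$ on the right and the nonnegative $V^{K+1}$ on the left (which I drop). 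The accumulated dual-difference terms are handled exactly as in the derivation of \eqref{eq:def-c3}: the coefficient gap $\tfrac{\beta_{i}(2-\gamma_{i})}{2\gamma_{i}^{2}}-\tfrac{\beta_{i}}{2}(1-\tfrac1{\gamma_{i}})^{2}$ is positive by the choice $\gamma_{i}\in(0,\tfrac{1+\sqrt5}{2})$, so the left-hand difference terms dominate, and the residual $\sum\|\vu_{i}^{k+1}-\vu_{i}^{k}\|^{2}$ is controlled by $c_{3}$ times $c_{1}$. Splitting $V^{1}$ into $\alpha_{1}(\vu_{1})+\alpha_{2}(\vu_{2})$ plus the two squared-norm terms $\tfrac{\beta_{1}}{2}\|B\vy^{1}-B\vy^{*}\|^{2}$, $\tfrac{\beta_{2}}{2}\|{-}C\vy^{1}+C\vy^{*}\|^{2}$ (which together with $c_{1}c_{3}$ constitute $c_{4}$), this produces, for all $\vu_{1}\ge\mathbf0$ and all $\vu_{2}$,
\[
\sum_{k=1}^{K}\!\Big[f(\vx^{k+1})+g(\vy^{k+1})-\big(f(\vx^{*})+g(\vy^{*})\big)+\beta_{1}\langle \vh(\vx^{k+1})-B\vy^{k+1},\vu_{1}\rangle+\beta_{2}\langle A\vx^{k+1}+C\vy^{k+1}-\vd,\vu_{2}\rangle\Big]\le \alpha_{1}(\vu_{1})+\alpha_{2}(\vu_{2})+c_{4}.
\]

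Next I would convert the running sum into the averaged iterate via Jensen's inequality. Convexity of $f$ and $g$ gives $f(\bar\vx^{K})\le\frac1K\sum_{k}f(\vx^{k+1})$ and $g(\bar\vy^{K})\le\frac1K\sum_{k}g(\vy^{k+1})$, while linearity makes $\frac1K\sum_{k}(A\vx^{k+1}+C\vy^{k+1}-\vd)=\bar\vr_{2}^{K}$ an exact identity. The one place needing care is the nonlinear residual: convexity of each $h_{j}$ yields the componentwise bound $\vh(\bar\vx^{K})-B\bar\vy^{K}\le\frac1K\sum_{k}(\vh(\vx^{k+1})-B\vy^{k+1})$, and \emph{because} $\vu_{1}\ge\mathbf0$ this inequality survives pairing with $\vu_{1}$, so $\langle\bar\vr_{1}^{K},\vu_{1}\rangle\le\frac1K\sum_{k}\langle\vh(\vx^{k+1})-B\vy^{k+1},\vu_{1}\rangle$. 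Dividing by $K$ then gives, for all $\vu_{1}\ge\mathbf0$ and all $\vu_{2}$,
\[
f(\bar\vx^{K})+g(\bar\vy^{K})-\big(f(\vx^{*})+g(\vy^{*})\big)+\beta_{1}\langle\bar\vr_{1}^{K},\vu_{1}\rangle+\beta_{2}\langle\bar\vr_{2}^{K},\vu_{2}\rangle\le\frac{\alpha_{1}(\vu_{1})+\alpha_{2}(\vu_{2})+c_{4}}{K}.
\]

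Finally I would extract the two displayed bounds by selecting multipliers. For feasibility I take $\vu_{1}=\rho_{1}[\bar\vr_{1}^{K}]_{+}/\|[\bar\vr_{1}^{K}]_{+}\|$ and $\vu_{2}=\rho_{2}\bar\vr_{2}^{K}/\|\bar\vr_{2}^{K}\|$ (setting the corresponding term to zero when a residual vanishes), using $\langle\bar\vr_{1}^{K},[\bar\vr_{1}^{K}]_{+}\rangle=\|[\bar\vr_{1}^{K}]_{+}\|^{2}$ to obtain $\beta_{1}\rho_{1}\|[\bar\vr_{1}^{K}]_{+}\|+\beta_{2}\rho_{2}\|\bar\vr_{2}^{K}\|$ on the left and $M(\rho_{1},\rho_{2})/K$ on the right, since these $\vu_{i}$ lie in the radius-$\rho_{i}$ balls. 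Bounding the objective gap from below by \eqref{KKT} at $(\bar\vx^{K},\bar\vy^{K})$, together with $\langle\bar\vr_{1}^{K},\vu_{1}^{*}\rangle\le\|\vu_{1}^{*}\|\,\|[\bar\vr_{1}^{K}]_{+}\|$ (valid since $\vu_{1}^{*}\ge\mathbf0$) and $\langle\bar\vr_{2}^{K},\vu_{2}^{*}\rangle\le\|\vu_{2}^{*}\|\,\|\bar\vr_{2}^{K}\|$, the $-\beta_{i}\|\vu_{i}^{*}\|$ contributions combine with the $\beta_{i}\rho_{i}$ terms to give $\beta_{1}(\rho_{1}-\|\vu_{1}^{*}\|)\|[\bar\vr_{1}^{K}]_{+}\|+\beta_{2}(\rho_{2}-\|\vu_{2}^{*}\|)\|\bar\vr_{2}^{K}\|\le M(\rho_{1},\rho_{2})/K$, and factoring out $\min_{i}\beta_{i}(\rho_{i}-\|\vu_{i}^{*}\|)$ yields the feasibility estimate. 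The objective upper bound follows by setting $\vu_{1}=\vu_{2}=\mathbf0$, and the objective lower bound follows by inserting the just-proved feasibility bound into the \eqref{KKT} lower estimate, producing the factor $\max_{i}\beta_{i}\|\vu_{i}^{*}\|/\min_{i}\beta_{i}(\rho_{i}-\|\vu_{i}^{*}\|)$. Finiteness of $M(\rho_{1},\rho_{2})$ is immediate, since $\alpha_{1},\alpha_{2}$ are continuous and the suprema are over compact balls while $c_{4}<\infty$.

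The main obstacle is the bookkeeping in the first step: the dual-difference terms do not telescope cleanly, so one must combine the positivity of the coefficient gap from \eqref{eq:def-c12} with the summability bound $c_{3}$ from \eqref{eq:def-c3} to absorb them into the single finite constant $c_{4}$ rather than into a $K$-dependent quantity. This is exactly where keeping the multipliers free while fixing $(\vx,\vy)=(\vx^{*},\vy^{*})$ pays off, since the free multipliers are what later enable the dual-ball selection that turns the variational inequality into explicit rate estimates. By contrast, the convexity step for the nonlinear residual is the conceptual crux but technically short, relying only on $\vu_{1}\ge\mathbf0$.
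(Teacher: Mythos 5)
Your proposal is correct and follows essentially the same route as the paper's proof: specialize \eqref{(10)} at $(\vx^{*},\vy^{*})$ with free multipliers, sum and telescope to obtain \eqref{(12)}, pass to ergodic averages via convexity of $f$, $g$, and the components of $\vh$ (using $\vu_{1}\ge\mathbf{0}$), select the dual variables on the balls of radii $\rho_{1},\rho_{2}$, and close with Lemma~\ref{KKT_lemma} at $(\bar{\vx}^{K},\bar{\vy}^{K})$. The only cosmetic difference is your derivation of the objective upper bound (taking $\vu_{1}=\vu_{2}=\mathbf{0}$ rather than dropping the nonnegative residual terms in \eqref{(13)}), which is equivalent.
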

\begin{proof}
\yx{Letting $(\vx,\vy)=(\vx^{*},\vy^{*})$} in \eqref{(10)}, we have
\begin{equation*}
\begin{aligned}
&f(\vx^{k+1}) + g(\vy^{k+1})
 - \bigl( f(\vx^{*}) + g(\vy^{*}) \bigr)
 + \beta_{1}\big\langle \vh(\vx^{k+1}) - B\vy^{k+1}, \vu_{1} \big\rangle
 + \beta_{2}\big\langle A\vx^{k+1} + C\vy^{k+1} - \vd, \vu_{2} \big\rangle \\
\leq&
V^{k}( \vu_{1}, \vu_{2}, \vy^{*})
 - V^{k+1}(\vu_{1}, \vu_{2}, \vy^{*})
 + \tfrac{\beta_{1}}{2}\bigl(1 - \tfrac{1}{\gamma_{1}}\bigr)^{2}\lVert \vu_{1}^{k} - \vu_{1}^{k-1} \rVert^{2}
  + \tfrac{\beta_{2}}{2}\bigl(1 - \tfrac{1}{\gamma_{2}}\bigr)^{2}\lVert \vu_{2}^{k} - \vu_{2}^{k-1} \rVert^{2}. 
\end{aligned}
\end{equation*}
Summing up the above inequality from $k=1,\cdots K$, for some $K\in \mathbb{N}^{+}$, we derive
\begin{equation*}
\begin{aligned}
&\sum_{k=1}^{K}\Bigl(
    f(\vx^{k+1}) + g(\vy^{k+1})
    - \bigl( f(\vx^{*}) + g(\vy^{*}) \bigr)
    + \beta_{1}\big\langle \vh(\vx^{k+1}) - B\vy^{k+1}, \vu_{1} \big\rangle
    + \beta_{2}\big\langle A\vx^{k+1} + C\vy^{k+1} - \vd, \vu_{2} \big\rangle
\Bigr) \\
\leq&
\tfrac{\beta_{1}}{2\gamma_{1}}\lVert \vu_{1}^{1} - \vu_{1} \rVert^{2}
+\tfrac{\beta_{2}}{2\gamma_{2}}\lVert \vu_{2}^{1} - \vu_{2} \rVert^{2}
+\tfrac{\beta_{1}}{2}\lVert B\vy^{1} - B\vy^{*} \rVert^{2}
+\tfrac{\beta_{2}}{2}\lVert -C\vy^{1} + C\vy^{*} \rVert^{2} \\
&+ \sum_{k=1}^{K}\Bigl(
    \tfrac{\beta_{1}}{2}\bigl(1 - \tfrac{1}{\gamma_{1}}\bigr)^{2}
      \lVert \vu_{1}^{k} - \vu_{1}^{k-1} \rVert^{2}
  + \tfrac{\beta_{2}}{2}\bigl(1 - \tfrac{1}{\gamma_{2}}\bigr)^{2}
      \lVert \vu_{2}^{k} - \vu_{2}^{k-1} \rVert^{2}
\Bigr).
\end{aligned}
\end{equation*}
Dividing both sides of the above inequality by $K$ and using the definition of $c_4$, we have
\beq\label{(12)}
\begin{aligned}
\frac{1}{K}\sum_{k=1}^{K}\Bigl(
    f(\vx^{k+1}) + g(\vy^{k+1})
    - \bigl( f(\vx^{*}) + g(\vy^{*}) \bigr)
    + \beta_{1}\big\langle \vh(\vx^{k+1}) - B\vy^{k+1}, \vu_{1} \big\rangle \\
    + \beta_{2}\big\langle A\vx^{k+1} + C\vy^{k+1} - \vd, \vu_{2} \big\rangle
\Bigr) 
\leq \frac{\alpha_{1}(\vu_{1}) + \alpha_{2}(\vu_{2}) + c_{4}}{K}.
\end{aligned}
\eeq

Moreover, by the convexity of $f$ and $g$, it holds
\begin{equation*}
f(\bar{\vx}^{K})+g(\bar{\vy}^{K})\leq \frac{1}{K}\sum_{k=1}^{K}f(\vx^{k+1})+g(\vy^{k+1}).
\end{equation*}
Also, from the convexity of each component of $\vh$ and $\vu_{1}\geq \mathbf{0}$, it follows 
\begin{equation*}
\big\langle \bar{\vr}_{1}^{K},\vu_{1}\big\rangle =\big\langle \vh(\bar{\vx}^{K})-B\bar{\vy}^{K},\vu_{1}\big\rangle \leq \frac{1}{K}\sum_{k=1}^{K}\big\langle \vh(\vx^{k+1})-B\vy^{k+1},\vu_{1}\big\rangle .
\end{equation*}
Hence, \eqref{(12)} \yx{together with the above two inequalities and $\sum_{k=1}^K \left(A\vx^{k+1} + C\vy^{k+1} - \vd\right)= K\bar{\vr}_2^K$} gives
\begin{equation*}
f(\bar{\vx}^{K})+g(\bar{\vy}^{K})-(f(\vx^{*})+g(\vy^{*}))+\beta_{1}\big\langle \bar{\vr}_{1}^{K},\vu_{1}\big\rangle +\beta_{2}\big\langle \bar{\vr}_{2}^{K},\vu_{2}\big\rangle \leq \frac{\alpha_{1}(\vu_{1})+\alpha_{2}(\vu_{2})+c_{4}}{K},\forall\, \vu_1\ge\vzero, \vu_2.
\end{equation*}

Now let $\vu_{1}=\rho_{1}\frac{[\bar{\vr}_{1}^{K}]_{+}}{\lVert [\bar{\vr}_{1}^{K}]_{+}\rVert}$ and $\vu_{2}=\rho_{2}\frac{\bar{\vr}_{2}^{K}}{\lVert \bar{\vr}_{2}^{K}\rVert}$  
in the above inequality to have
\beq\label{(13)}
\begin{aligned}
&f(\bar{\vx}^{K})+g(\bar{\vy}^{K})-(f(\vx^{*})+g(\vy^{*}))+\beta_{1}\rho_{1}\lVert [\bar{\vr}_{1}^{K}]_{+}\rVert+\beta_{2}\rho_{2}\lVert \bar{\vr}_{2}^{K}\rVert\\
\leq &\frac{\alpha_{1}\left(\rho_{1}\frac{[\bar{\vr}_{1}^{K}]_{+}}{\lVert [\bar{\vr}_{1}^{K}]_{+}\rVert}\right)+\alpha_{2}\left(\rho_{2}\frac{\bar{\vr}_{2}^{K}}{\lVert \bar{\vr}_{2}^{K}\rVert}\right)+c_{4}}{K}
\le \frac{M(\rho_{1},\rho_{2})}{K}.
\end{aligned}
\eeq
Meanwhile, 
\yx{let $(\vx,\vy)=(\bar{\vx}^{K},\bar{\vy}^{K})$ in} \eqref{KKT} and use $\vu_{1}^{*}\geq 0$. We have
\begin{equation*}
\begin{aligned}
&f(\bar{\vx}^{K})+g(\bar{\vy}^{K})-(f(\vx^{*})+g(\vy^{*}))+\beta_{1}\big\langle [\bar{\vr}_{1}^{K}]_{+},\vu_{1}^{*}\big\rangle +\beta_{2}\big\langle \bar{\vr}_{2}^{K},\vu_{2}^{*}\big\rangle \\
\geq &f(\bar{\vx}^{K})+g(\bar{\vy}^{K})-(f(\vx^{*})+g(\vy^{*}))+\beta_{1}\big\langle \bar{\vr}_{1}^{K},\vu_{1}^{*}\big\rangle +\beta_{2}\big\langle \bar{\vr}_{2}^{K},\vu_{2}^{*}\big\rangle 
\geq 0.
\end{aligned}
\end{equation*}
Thus
\begin{equation*}
f(\bar{\vx}^{K})+g(\bar{\vy}^{K})-(f(\vx^{*})+g(\vy^{*}))\geq -\beta_{1}\lVert \vu_{1}^{*}\rVert\lVert[\bar{\vr}_{1}^{K}]_{+}\rVert-\beta_{2}\lVert \vu_{2}^{*}\rVert\lVert\bar{\vr}_{2}^{K}\rVert.
\end{equation*}
Together with \eqref{(13)}, we derive
\begin{equation*}
\lVert[\bar{\vr}_{1}^{K}]_{+}\rVert+\lVert\bar{\vr}_{2}^{K}\rVert\leq \frac{M(\rho_{1},\rho_{2})}{\min_{i=1,2}\beta_{i}(\rho_{i}-\lVert \vu_{i}^{*}\rVert)K},
\end{equation*}
and 
\begin{equation*}
\begin{aligned}
-\frac{\yx{\max_{i=1,2}\beta_{i}\lVert \vu_{i}^{*}\rVert}}{\min_{i=1,2}\beta_{i}(\rho_{i}-\lVert \vu_{i}^{*}\rVert)}\frac{M(\rho_{1},\rho_{2})}{K} \leq f(\bar{\vx}^{K})+g(\bar{\vy}^{K})-(f(\vx^{*})+g(\vy^{*})). 
\end{aligned}
\end{equation*}
This completes the proof.
\end{proof}


\section{Numerical Results} 
\label{sec:numerical}
In this section, we test the proposed NL-ADMM on the two applications 
described in Section \ref{sec:applications}: 
the distributed resource-allocation problem \eqref{eq:resource_allocation} and the constrained machine learning model \eqref{eq:consensus}. Through comparisons with standard baselines, we illustrate the effectiveness and communication efficiency of the proposed method across both tasks.

\subsection{Distributed resource allocation}
To generate concrete test instances of \eqref{eq:resource_allocation}, \yx{for each $j=1,\ldots,p$, we set} 
\begin{equation*}
f_{j}(\vx_{j})=\frac{1}{2}\vx_{j}^{\top}Q_{j}^{(f)}\vx_{j}+b_{j}^{(f)\top}\vx_{j}+\iota_{[l,u]^{d}}(\vx_{j}), \quad h_{j}(\vx_{j})=\frac{1}{2}\vx_{j}^{\top}Q_{j}^{(h)}\vx_{j}+b_{j}^{(h)\top}\vx_{j}+c_{j}.
\end{equation*}
in \eqref{Resource_allocation}.
\yx{Here,} $Q_{j}^{(f)}$ \yx{and} $Q_{j}^{(h)}$ are positive definite matrices in $\RR^{d\times d}$ generated as 
\begin{equation*}
Q_{j}^{(f)}=R_{j}^{(f)\top}R_{j}^{(f)}+\mu_{f}I, \quad Q_{j}^{(h)}=R_{j}^{(h)\top}R_{j}^{(h)}+\mu_{h}I,
\end{equation*}
with $R_{j}^{(f)}$ and $R_{j}^{(h)}$ drawn entry-wise from a standard normal distribution and normalized by their spectral norms, i.e., $R_{j}^{(\cdot)} \leftarrow R_{j}^{(\cdot)} / \lVert R_{j}^{(\cdot)} \rVert_{2}$. The vectors $b_{j}^{(f)}$ and $b_{j}^{(h)}$ are also generated entry-wise from a standard normal distribution. The constant $c_{j}$ is chosen as a negative random scalar to ensure Slater’s condition at the origin, and 
$\mu_{f}=10^{-2}$ and $\mu_{h}=10^{-4}$. 
In addition, we set 
$l=-5$ and $u=5$ for the box constraint. 


We 
evaluate the numerical performance of the proposed nonlinear ADMM 
\yx{by comparing} it with two 
baselines: the augmented Lagrangian method (ALM) and the Douglas–Rachford Splitting (DRS) method. 
ALM is applied directly to 
problem~\eqref{Resource_allocation}. 
Given a penalty parameter $\beta_{\mathrm{ALM}}>0$ and an initial multiplier
$u^{0}\ge 0$, the ALM iterates are defined as
\begin{subequations}\label{eq:ALM_scaled}
\begin{align}
\vx^{k+1}
&=\argmin_{\vx}\;
\sum_{j=1}^{p}f_{j}(\vx_{j})
+\frac{\beta_{\mathrm{ALM}}}{2}\left\|\left[\sum_{j=1}^{p}h_{j}(\vx_{j})+u^{k}\right]_{+}\right\|^{2}, \label{alm_primal}
\\
u^{k+1}
&=\left[u^{k}+\sum_{j=1}^{p}h_{j}(\vx^{k+1}_{j})\right]_{+}.
\label{alm_scaled_u}
\end{align}
\end{subequations}
The DRS method is applied to \eqref{eq:resource_allocation}, initialized with $\vw^{0}$ and uses parameters 
$\beta_{\mathrm{DRS}}>0$ and $\eta_{\mathrm{DRS}}\in(0,1]$. 
The resulting updates are given by
 \begin{subequations}\label{DRS_algorithm_1}
\begin{align}
\vu^{k}
&= \tfrac{1}{p}\mathbf{1}\mathbf{1}^{\top}\vw^{k},
\label{eq:drs-u} \\
\vq^{k}
&= 2\vu^{k}-\vw^{k},
\label{eq:drs-q} \\
\vx_{j}^{k}
&\in \argmin_{\vx_{j}}\;
f_{j}(\vx_{j})
+\tfrac{\beta_{\mathrm{DRS}}}{2}
\bigl(\,[h_{j}(\vx_{j})+\vq_{j}^{k}]_{+}\bigr)^{2},
\quad j=1,\dots,p,
\label{eq:drs-x} \\
\vt^{k}
&= [\vq^{k}+\vh(\vx^{k})]_{+},
\label{eq:drs-t} \\
\vw^{k+1}
&= \vw^{k}
+2\eta_{\mathrm{DRS}}\bigl(\vt^{k}-\vu^{k}\bigr),
\label{eq:drs-w}
\end{align}
\end{subequations} with derivation provided in the appendix.


 
The algorithmic parameters are fixed uniformly across the settings of $p\in\{2,5,10\}$, with the dimension of each block fixed to $d=500$. For the ALM updates, we 
set its penalty parameter to $\beta_{\mathrm{ALM}}=5\times 10^{-4}$. The NL-ADMM and DRS employ the penalty parameters $\beta_{1}=10^{-3}$ and $\beta_{\mathrm{DRS}}=10^{-3}$, respectively. Note that NL-ADMM does not involve a second penalty parameter $\beta_{2}$, since there is no equality constraint for problem \eqref{eq:resource_allocation}. For DRS, we fix the relaxation parameter $\eta_{\mathrm{DRS}}=0.5$. These parameters are tuned through a grid search over candidate values covering multiple orders of magnitude, with the goal of optimizing communication efficiency. 



All inner minimization subproblems arising from the ALM formulation, as well as from the proximal operators in NL-ADMM and DRS, are solved by using an accelerated proximal gradient (APG) method with \yx{line search}. 
The stopping tolerance for APG is set to $\texttt{subtol}=10^{-5}$, and the outer iterations of all three methods terminate once the primal, dual, and complementarity residuals fall below $\texttt{tol}=10^{-4}$. To obtain higher accuracy reference solutions for each generated instance, we additionally 
\yx{run} the ALM 
with 
smaller tolerances $(\texttt{subtol}, \texttt{tol}) = (10^{-6}, 10^{-5})$.



Convergence is reported with respect to the cumulative number of communication rounds, rather than raw iteration counts, in order to reflect the communication cost incurred during distributed execution. For NL-ADMM and DRS, each outer iteration consists of one broadcast from the server to all workers and one aggregation of their local responses, resulting in two communication steps per iteration and hence one communication round. All local 
subproblems are solved independently by APG at each worker and therefore incur no additional communication cost. 
For 
ALM,  
each \yx{global} function 
evaluation within the APG subsolver 
\yx{requires} one communication round \yx{between the server and workers}. 

Figure \ref{Fig:resource_allocation} reports the objective value gaps and KKT violations of NL-ADMM and DRS 
with respect to the cumulative number of communication rounds for a representative run. For 
each $p\in\{2,5,10\}$, NL-ADMM reduces both metrics \yx{faster, requiring substantially fewer communication rounds than DRS to achieve comparable accuracy}. 
\begin{figure}
  \centering
  \begin{subfigure}{0.23\textwidth}
    \centering
    \includegraphics[width=\linewidth]{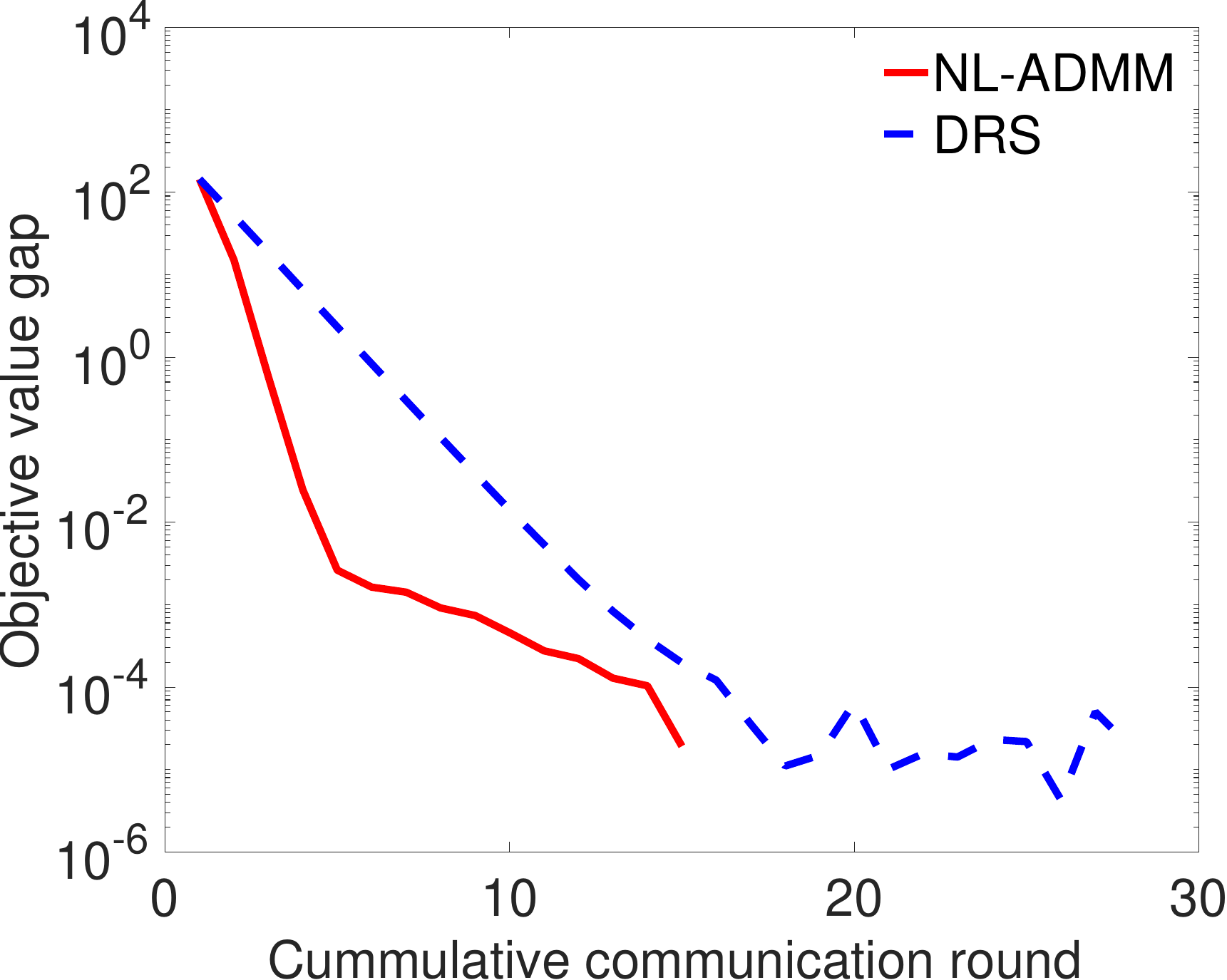}
    \label{fig:gap-p2-1}
  \end{subfigure}
  \begin{subfigure}{0.23\textwidth}
    \centering
    \includegraphics[width=\linewidth]{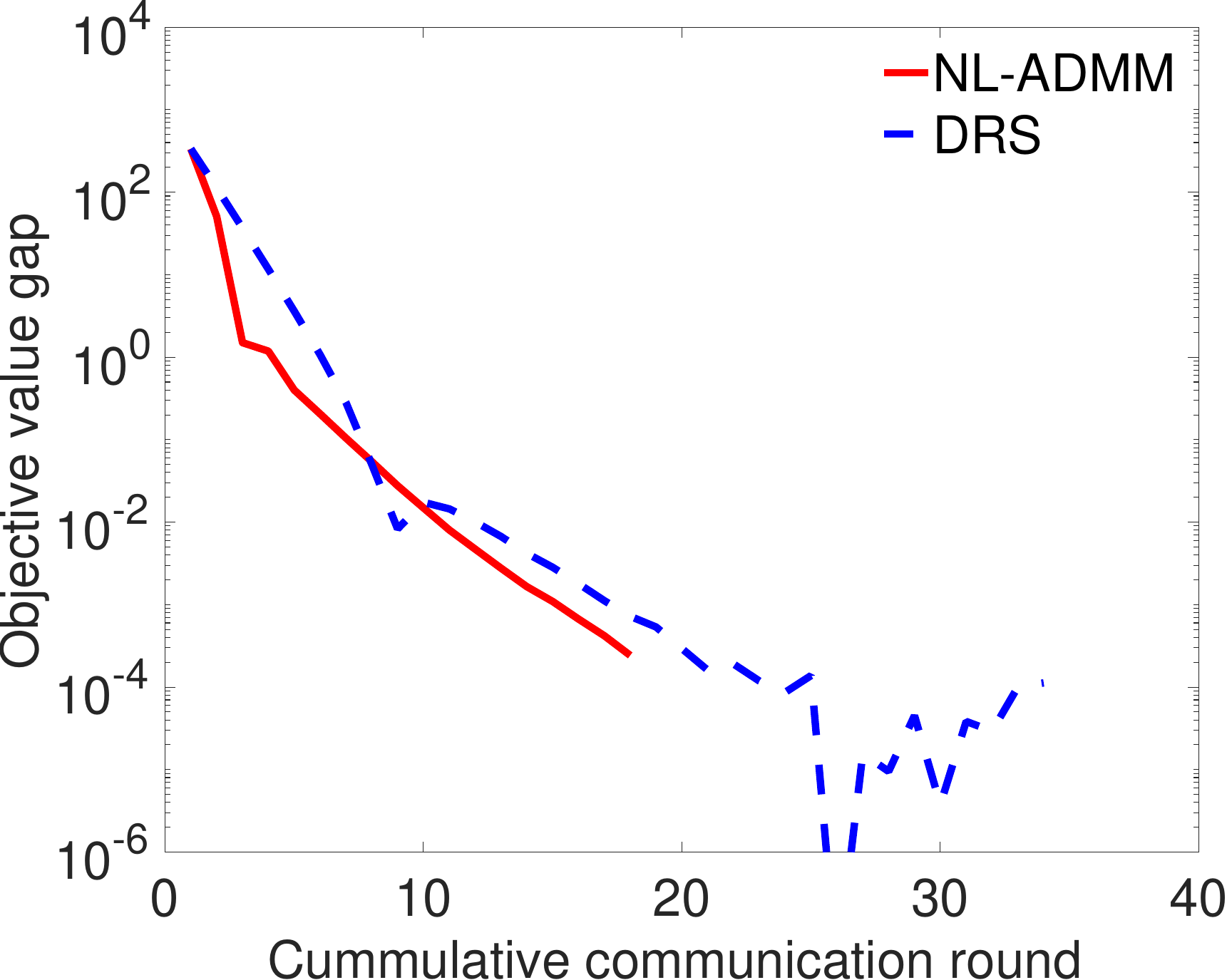}
    \label{fig:gap-p5-1}
  \end{subfigure}
  \begin{subfigure}{0.23\textwidth}
    \centering
    \includegraphics[width=\linewidth]{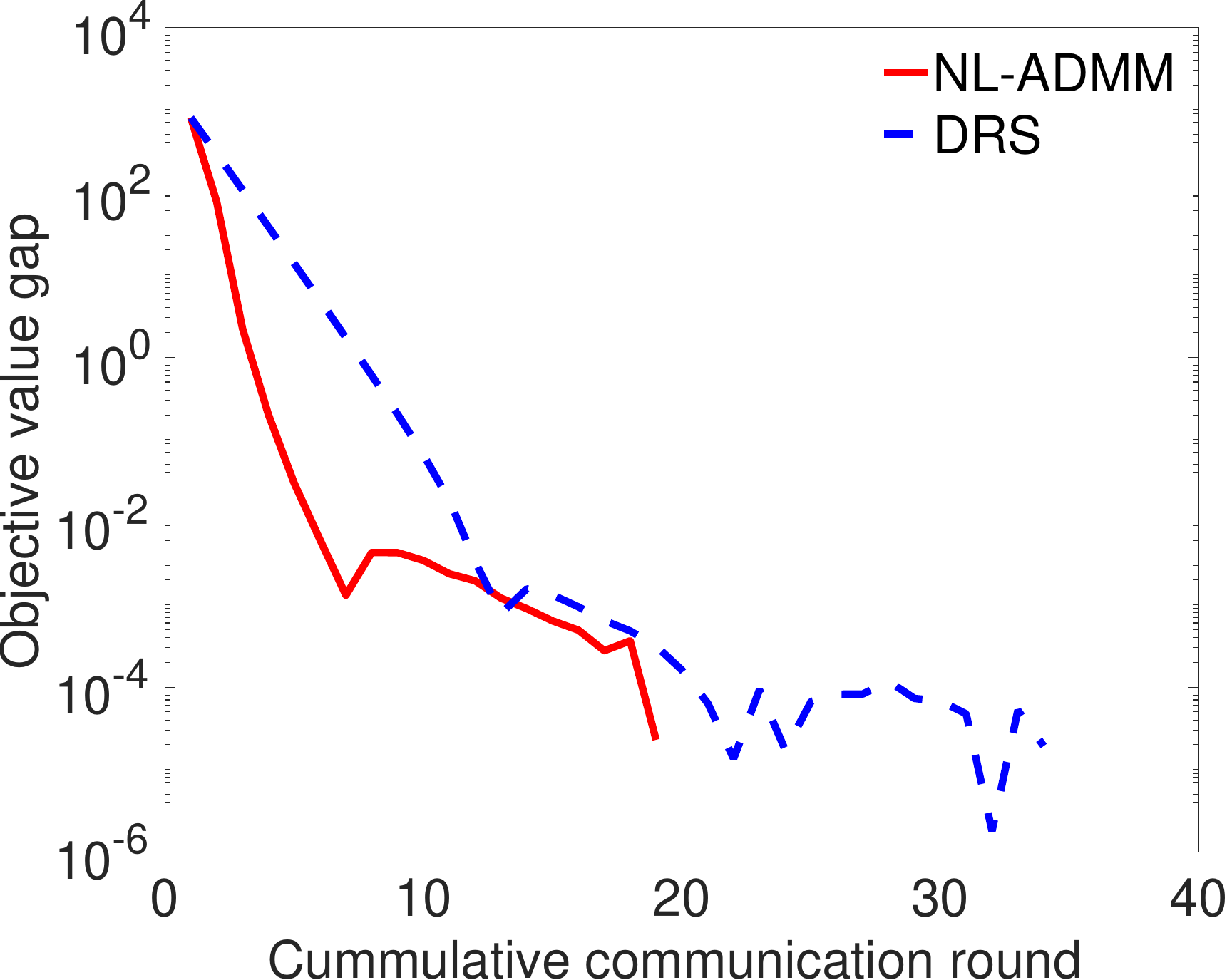}
    \label{fig:gap-p10-1}
  \end{subfigure}

  \vspace{0.4em}

  \centering
  \begin{subfigure}{0.23\textwidth}
    \centering
    \includegraphics[width=\linewidth]{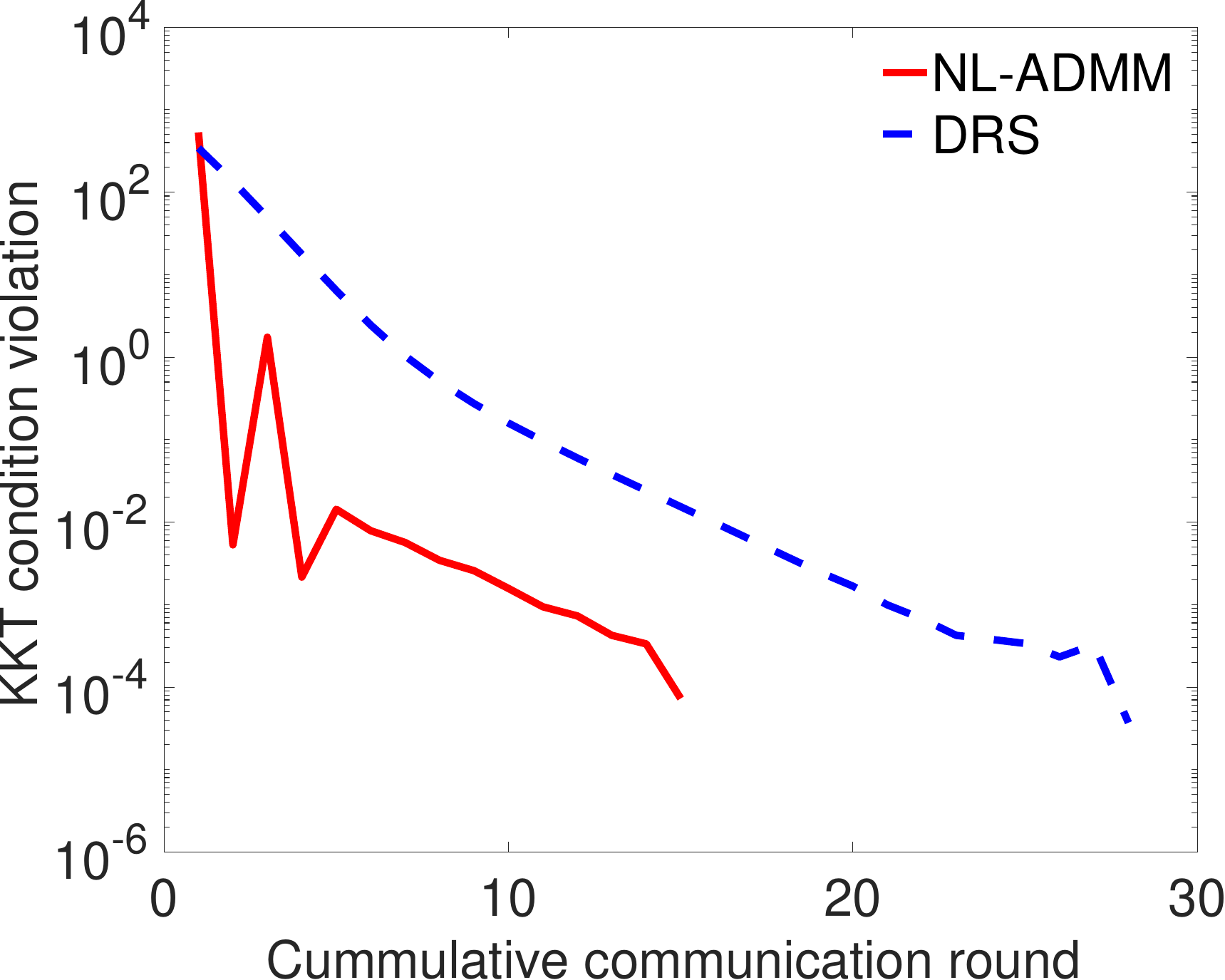}
    \caption{\(p=2\)}
    \label{fig:kkt-p2-1}
  \end{subfigure}
  \begin{subfigure}{0.23\textwidth}
    \centering
    \includegraphics[width=\linewidth]{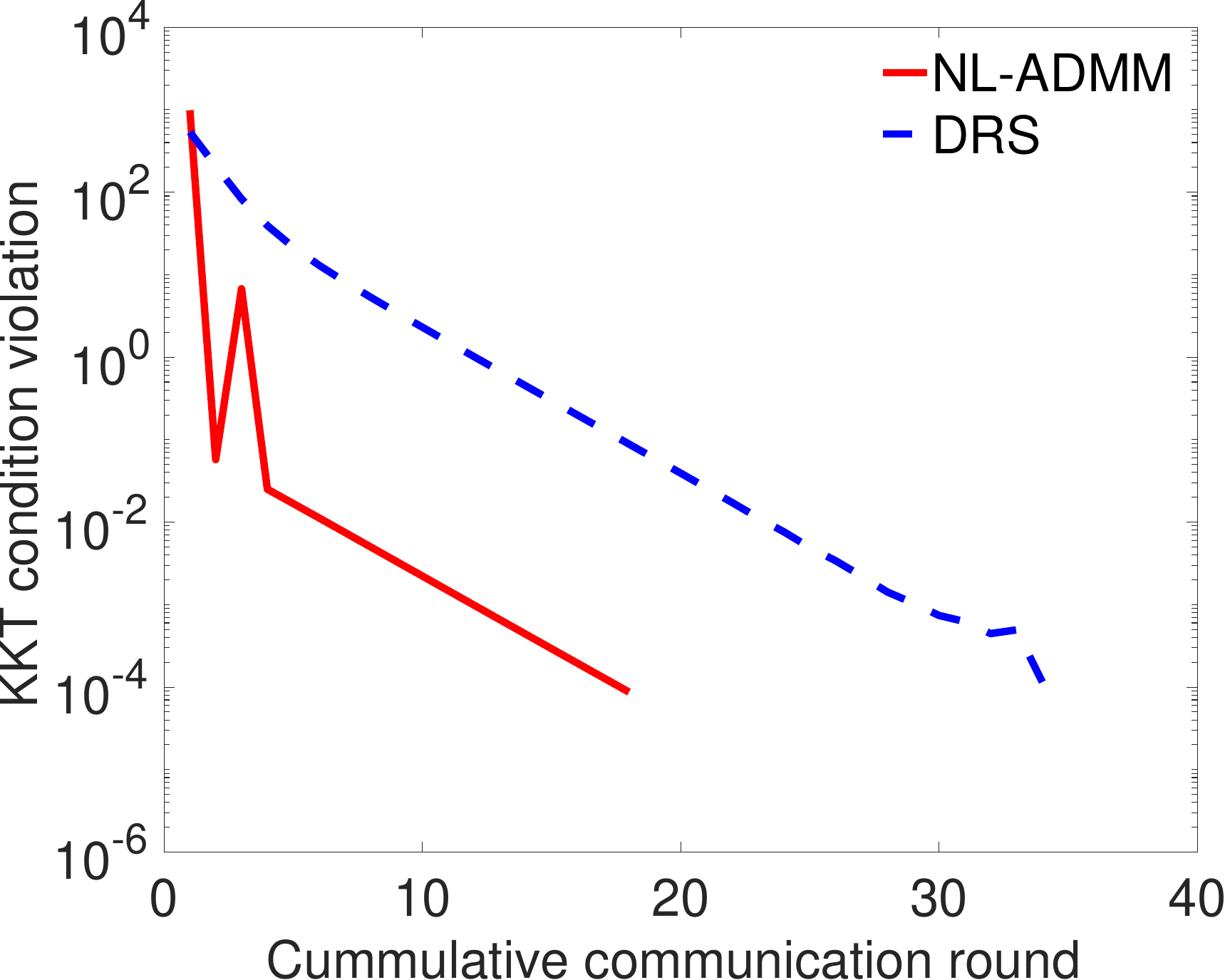}
    \caption{\(p=5\)}
    \label{fig:kkt-p5-1}
  \end{subfigure}
  \begin{subfigure}{0.23\textwidth}
    \centering
    \includegraphics[width=\linewidth]{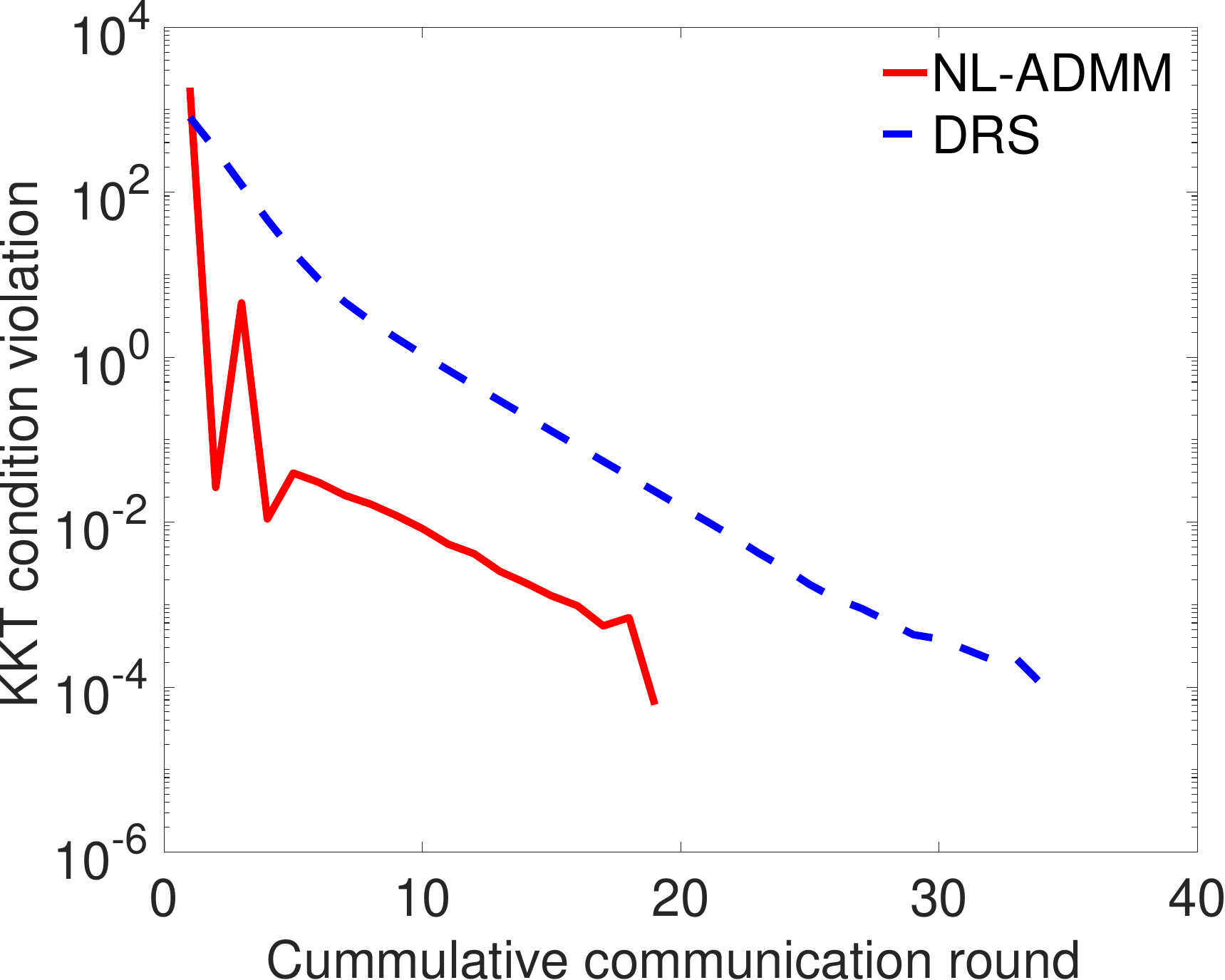}
    \caption{\(p=10\)}
    \label{fig:kkt-p10-1}
  \end{subfigure}

  \caption{Convergence behavior \yx{of the proposed NL-ADMM and the DRS baseline on solving instances of \eqref{eq:resource_allocation}}. 
  Top row: objective value gaps $|f(\vx)+g(\vy)-\big(f(\vx^{*})+g(\vy^{*})\big)|$. Bottom row: KKT violation, defined as the sum of primal, dual, and complementarity residuals. Columns correspond to \yx{different number $p$ of workers}. 
  The reference values $f(\vx^{*})+g(\vy^{*})$ are obtained from running ALM to a higher accuracy. 
  }
  \label{Fig:resource_allocation}
\end{figure}

\begin{table}[t]
\centering
\caption{Average communication rounds of the three compared methods and outer iterations of ALM over $10$ independent runs on instances of \eqref{eq:resource_allocation}.}
\label{tab:quad_comm}
\setlength{\tabcolsep}{10pt}
\begin{tabular}{c|ccc|c}
\toprule
$p$  & ALM (rounds) & DRS (rounds) & NL-ADMM (rounds) & ALM outer iterations \\
\midrule
2  & 1510.30 & 30.00 & 15.90 & 16.10 \\
5  & 1696.10 & 32.70 & 18.50 & 10.90 \\
10 & 2240.20 & 34.50 & 19.10 & 9.00  \\
\bottomrule
\end{tabular}
\end{table}

Table \ref{tab:quad_comm} reports the averages over 10 independently generated instances. NL-ADMM consistently requires the fewest communication rounds, with only a mild increase as $p$ grows. DRS also demonstrates reasonable scalability with respect to 
$p$, but at a noticeably higher communication cost. 
\yx{In} contrast, the 
ALM 
incurs substantially larger communication counts, 
even though the number of outer iterations remains small. Overall, NL-ADMM demonstrates favorable communication efficiency and scalability. The superior performance of NL-ADMM is attributed to the variable splitting formulation, as well as our new design of the algorithm. 

\subsection{Distributed machine learning}
\yx{In this subsection, we test the proposed algorithm on problem \eqref{eq:ml_base} or equivalently \eqref{eq:consensus}.} 
We 
set the functions in \eqref{eq:consensus_setup} as: 
\begin{equation*}
f_j(\vx_{j})
= \frac{1}{N}\sum_{i\in \mathcal{A}_{j}}\log\Big(1+\exp\big(-(\bm{\xi}^{1}_{i})^{\top}\vx_{j}\big)\Big)
+\frac{c}{2p}\|\vx_{j}\|_2^2,
\quad
h_j(\vx_{j})
= \frac{1}{M}\sum_{i\in \mathcal{B}_{j}}\log\Big(1+\exp\big((\bm{\xi}^{2}_{i})^{\top}\vx_{j}\big)\Big)-\frac{r}{p}.
\end{equation*}
Here, $c>0$ denotes the 
regularization parameter and 
is fixed to $c=10^{-2}$ in all experiments, and $r>0$ specifies the 
threshold of a violation score and 
is set to $r=0.5$.

\yx{Again, we compare the proposed NL-ADMM with ALM and DRS.} Three benchmark datasets from \cite{chang2011libsvm} are used for evaluation:
\texttt{medelon}, \texttt{australian\_scale}, and \texttt{a9a}. 
For problem \eqref{eq:ml_local_form}, the ALM formulation does not include the consensus constraint; consequently, the ALM updates reduce exactly to \eqref{alm_primal} and \eqref{alm_scaled_u}. DRS applied to \eqref{eq:consensus} is initialized with $\vw^{0}=(\vw_{\vu}^{0},\vw_{\vv}^{0})$ and parameters 
$\beta_{\mathrm{DRS}}>0$ and $\eta_{\mathrm{DRS}}\in(0,1]$, and it performs the updates:
\begin{subequations}
\label{DRS_algorithm_2}
\begin{align}
(\vu^{k},\vv^{k})
&=
\left(
\tfrac{1}{p}\mathbf{1}\mathbf{1}^{\top}\vw_{\vu}^{k},\;
\vw_{\vv}^{k}-(\mathbf{1}\otimes I)\bar{\vw}_{\vv}^{k}
\right), \ \text{with} \
\bar{\vw}_{\vv}^{k}
=
\tfrac{1}{p}\sum_{j=1}^{p}\vw_{\vv,j}^{k}, \label{eq:DRS-uv}\\
\vq^{k}
&=
(\vq_{\vu}^{k},\vq_{\vv}^{k})
=
\left(
2\vu^{k}-\vw_{\vu}^{k},\;
2\vv^{k}-\vw_{\vv}^{k}
\right),
\\
\vx_{j}^{k}
&\in
\argmin_{\vx_{j}}\;
f_{j}(\vx_{j})
+\tfrac{\beta_{\mathrm{DRS}}}{2}
\bigl(\,[h_{j}(\vx_{j})+\vq_{\vu,j}^{k}]_{+}\bigr)^{2}+\tfrac{\beta_{\mathrm{DRS}}}{2}\,
\bigl\|\vx_{j}+\vq_{\vv,j}^{k}\bigr\|^{2},
\quad j=1,\dots,p, \\
\vt^{k}
&=
(\vt_{\vu}^{k},\vt_{\vv}^{k})
=
\left(
[\vq_{\vu}^{k}+\vh(\vx^{k})]_{+},\;
\vq_{\vv}^{k}+\vx^{k}
\right),
 \\
\vw^{k+1}
&=
\vw^{k}
+2\eta_{\mathrm{DRS}}
\bigl(
\vt_{\vu}^{k}-\vu^{k},\;
\vt_{\vv}^{k}-\vv^{k}
\bigr).\label{eq:DRS-w-p2}
\end{align}
\end{subequations}
Detailed derivation is given in the appendix.

\yx{For each $p\in\{2,5,10\}$, we set the penalty parameter(s) to $\beta_{1}=1.00$ and $\beta_{2}=0.05$ for the NL-ADMM, $\beta_{\mathrm{ALM}}=0.50$ for ALM, and  $\beta_{\mathrm{DRS}}=0.10$ for DRS. In addition, the relaxation parameter for DRS is set to $\eta_{\mathrm{DRS}}=0.75$.} Similar to the previous experiment, to accommodate the heterogeneity and scaling of real-world datasets, these parameter values are selected via  grid search over candidate values spanning several orders of magnitude to achieve the best communication efficiency. 
The resulting choices are then fixed and used consistently across all datasets and problem instances. 

Following the same setup as in the previous experiment, all inner minimization subproblems from  ALM, NL-ADMM and DRS formulation, are solved by APG with line search, with tolerance $\texttt{subtol}=10^{-6}$. We monitor the primal, dual, and complementarity residuals to assess convergence, and all algorithms terminate once all of these residuals fall below a tolerance $\texttt{tol}=10^{-5}$. 
For benchmarking, we additionally compute a high-accuracy ALM solution for each dataset to serve as the reference optimal objective value, using tighter stopping criteria
$(\texttt{subtol},\texttt{tol})=(10^{-8},10^{-7})$, $(2\times10^{-9},2\times10^{-8})$, and $(10^{-8},10^{-7})$ for the \texttt{madelon}, \texttt{australian\_scale}, and \texttt{a9a} datasets, respectively.



As in the previous experiment, convergence is 
reported with respect to 
cumulative communication rounds. 
Each of NL-ADMM and DRS incurs one broadcast and one aggregation per iteration, which together constitute one communication round, whereas ALM \yx{requires one communication round for each APG step.} 
Figure~\ref{fig:madelon-2x3} shows the convergence curves of the three compared methods for the \texttt{madelon} dataset. We include the 
ALM trajectory to highlight the advantage of solving the variable splitting formulation by NL-ADMM and DRS. 
In Figures \ref{fig:australian_scale-2x3} and \ref{fig:a9a_scale-2x3} for the \texttt{australian\_scale} and \texttt{a9a} datasets, 
\yx{we omit the ALM curve to show the significant advantage of the proposed NL-ADMM over DRS. Clearly, the proposed method is much more communication-efficient than DRS.} 

\begin{figure}
  \centering
  \begin{subfigure}{0.23\textwidth}
    \centering
    \includegraphics[width=\linewidth]{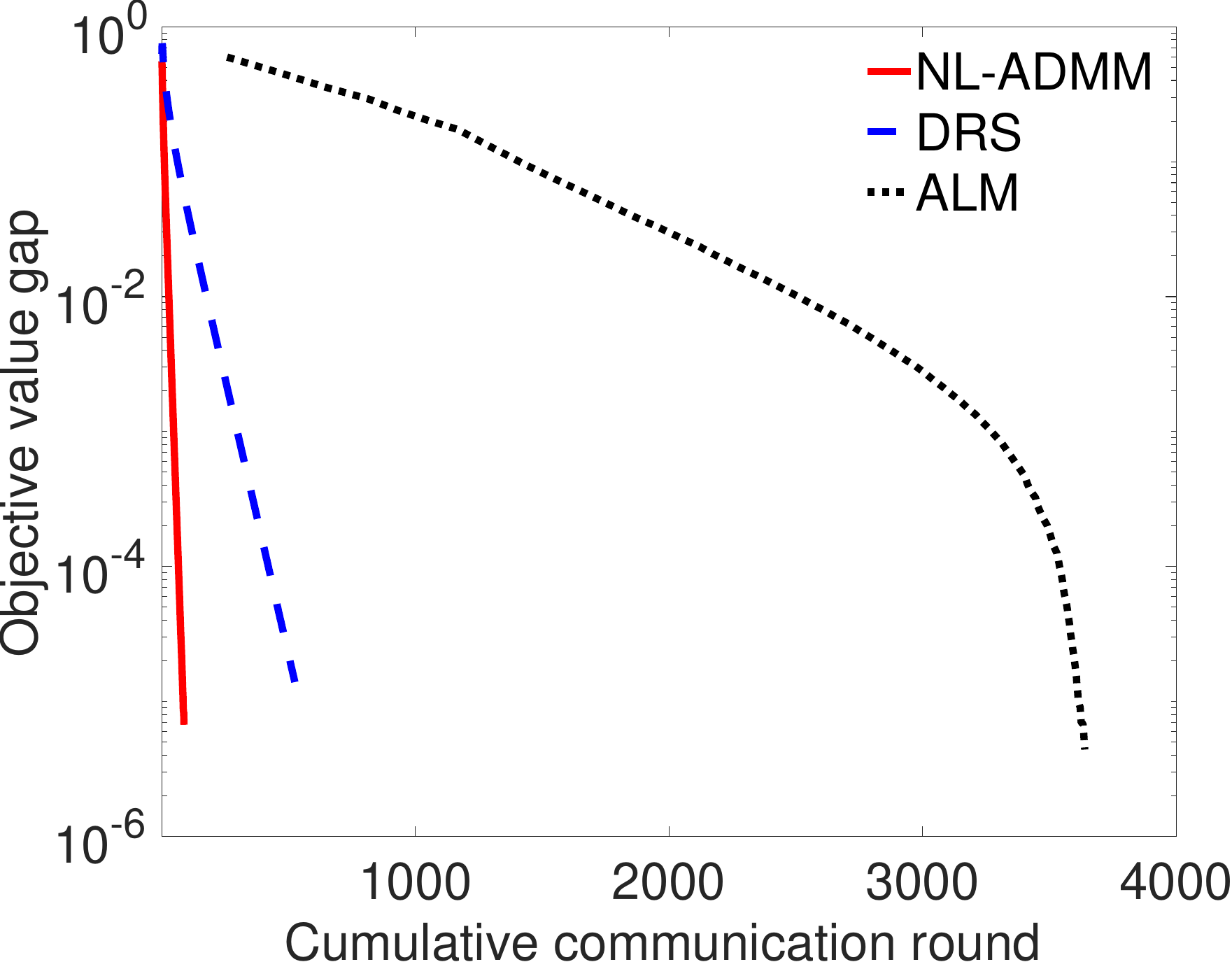}
    \label{fig:gap-p2}
  \end{subfigure}
  \begin{subfigure}{0.23\textwidth}
    \centering
    \includegraphics[width=\linewidth]{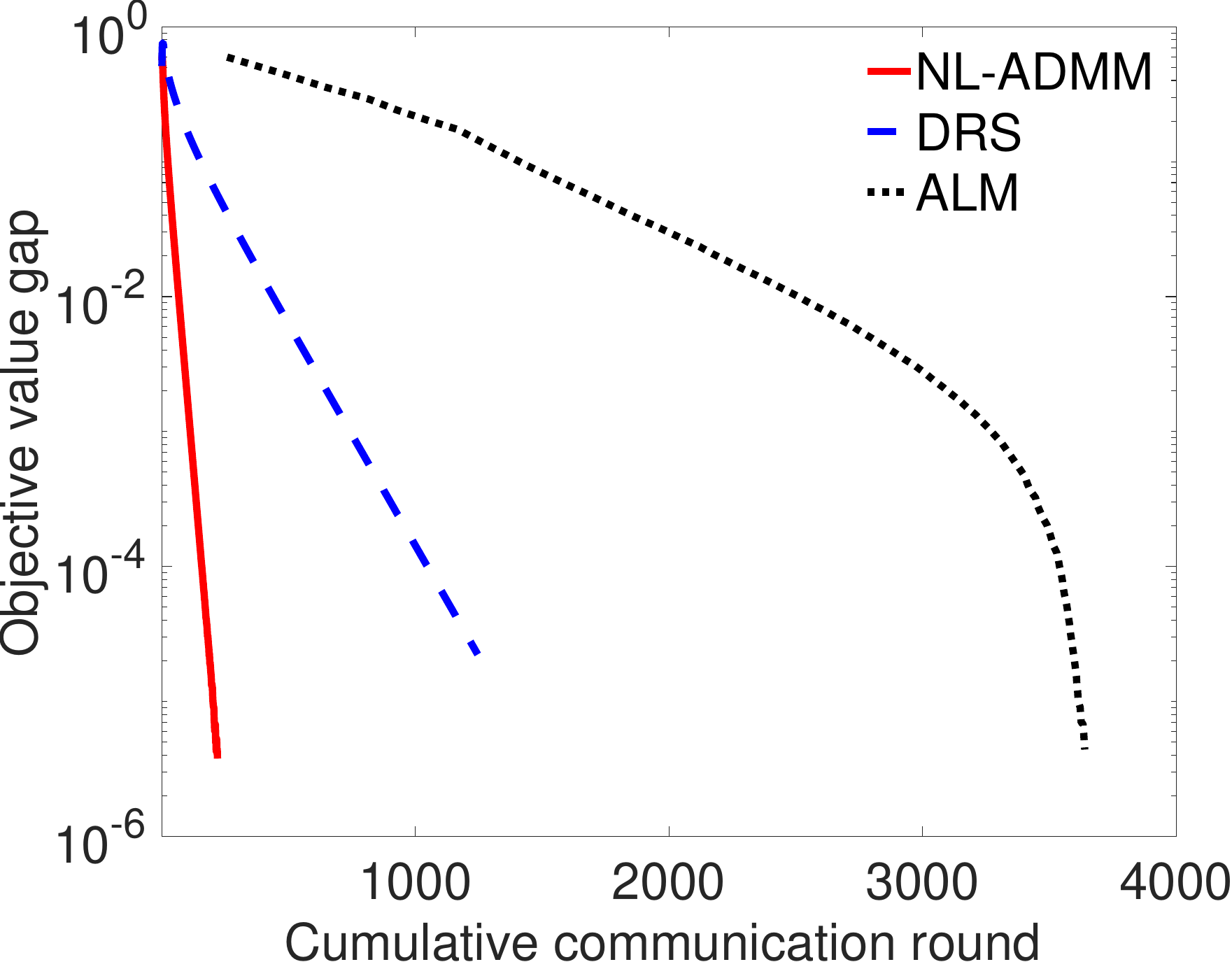}
    \label{fig:gap-p5}
  \end{subfigure}
  \begin{subfigure}{0.23\textwidth}
    \centering
    \includegraphics[width=\linewidth]{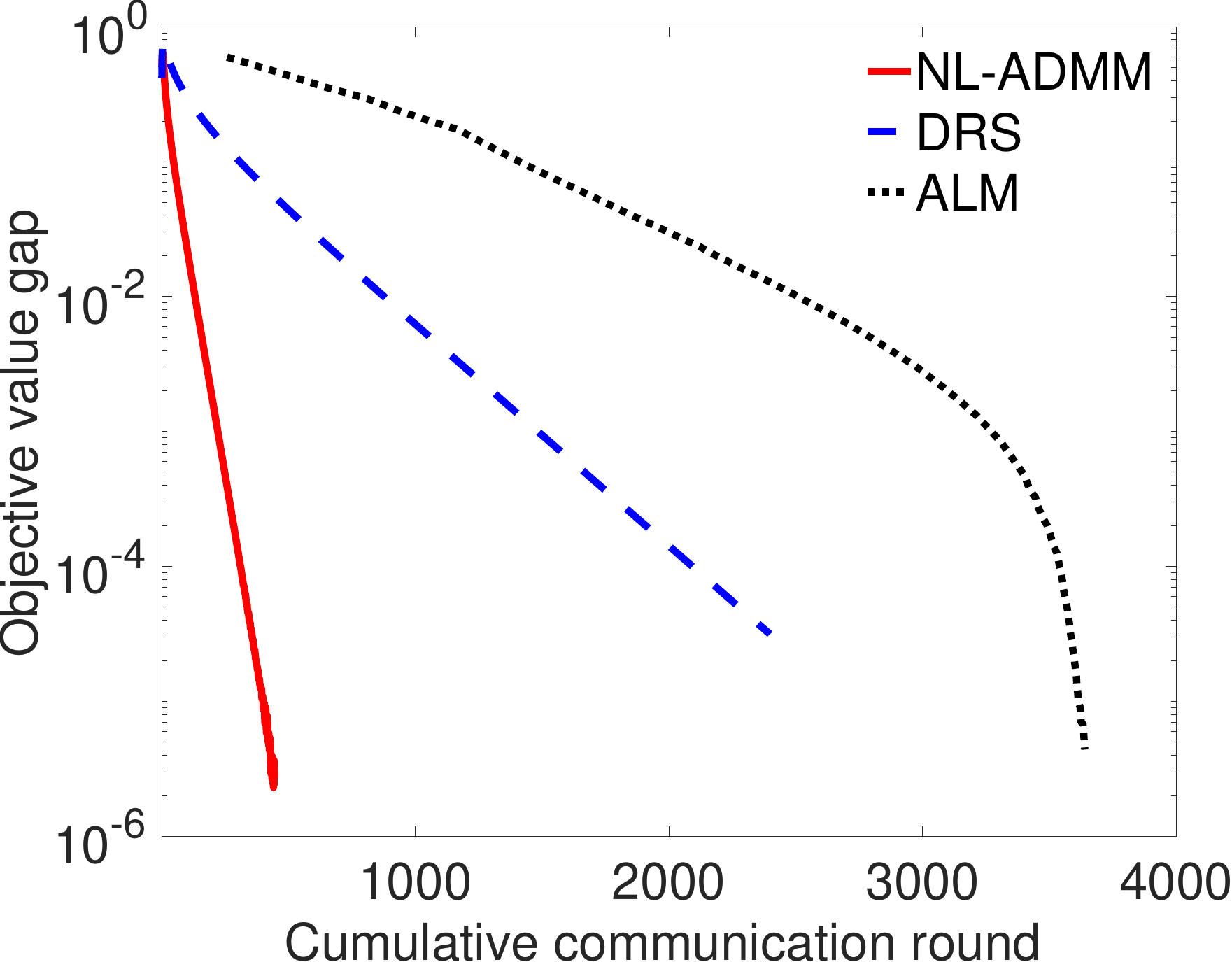}
    \label{fig:gap-p10}
  \end{subfigure}

  \vspace{0.4em}

  \centering
  \begin{subfigure}{0.23\textwidth}
    \centering
    \includegraphics[width=\linewidth]{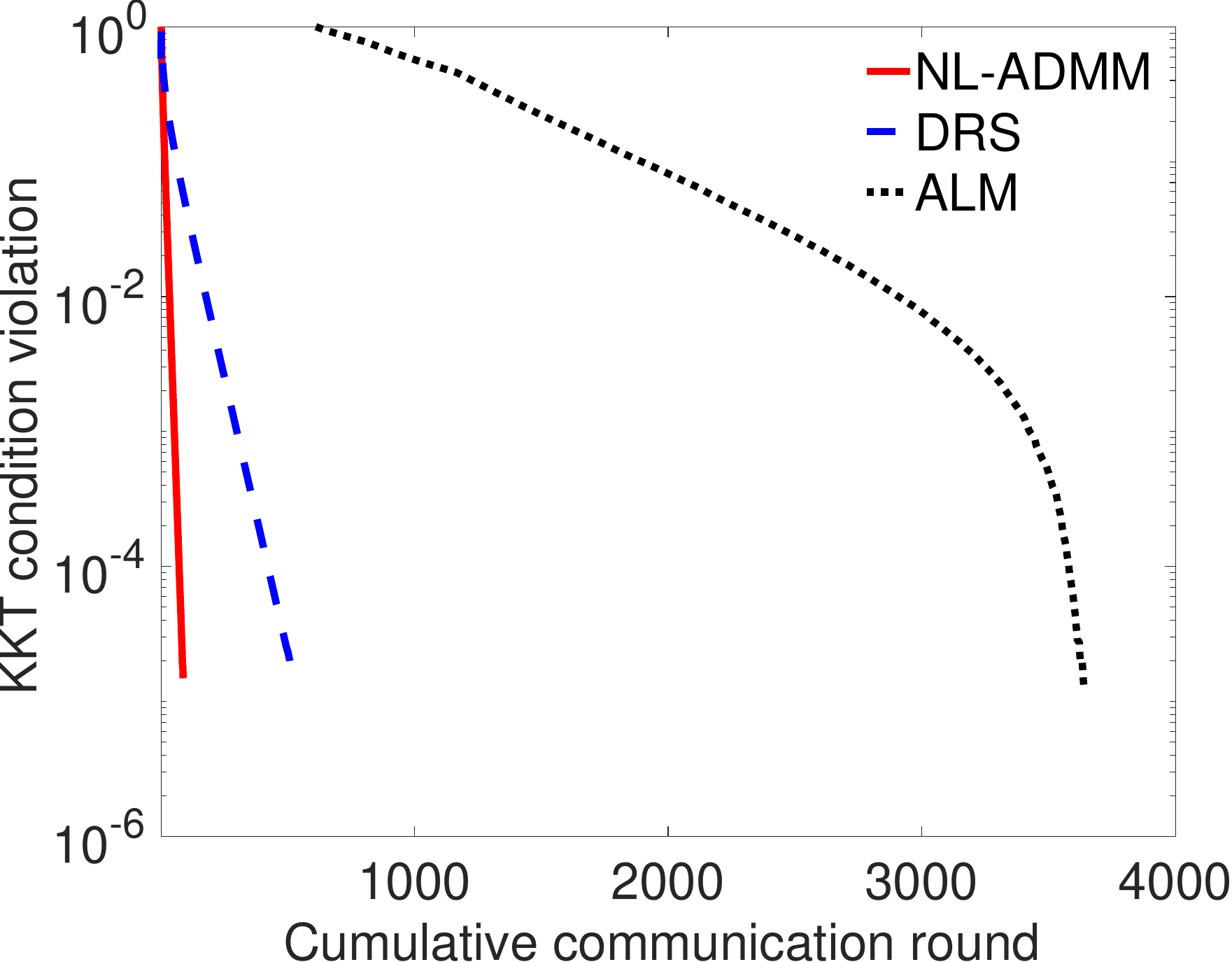}
    \caption{\(p=2\)}
    \label{fig:kkt-p2}
  \end{subfigure}
  \begin{subfigure}{0.23\textwidth}
    \centering
    \includegraphics[width=\linewidth]{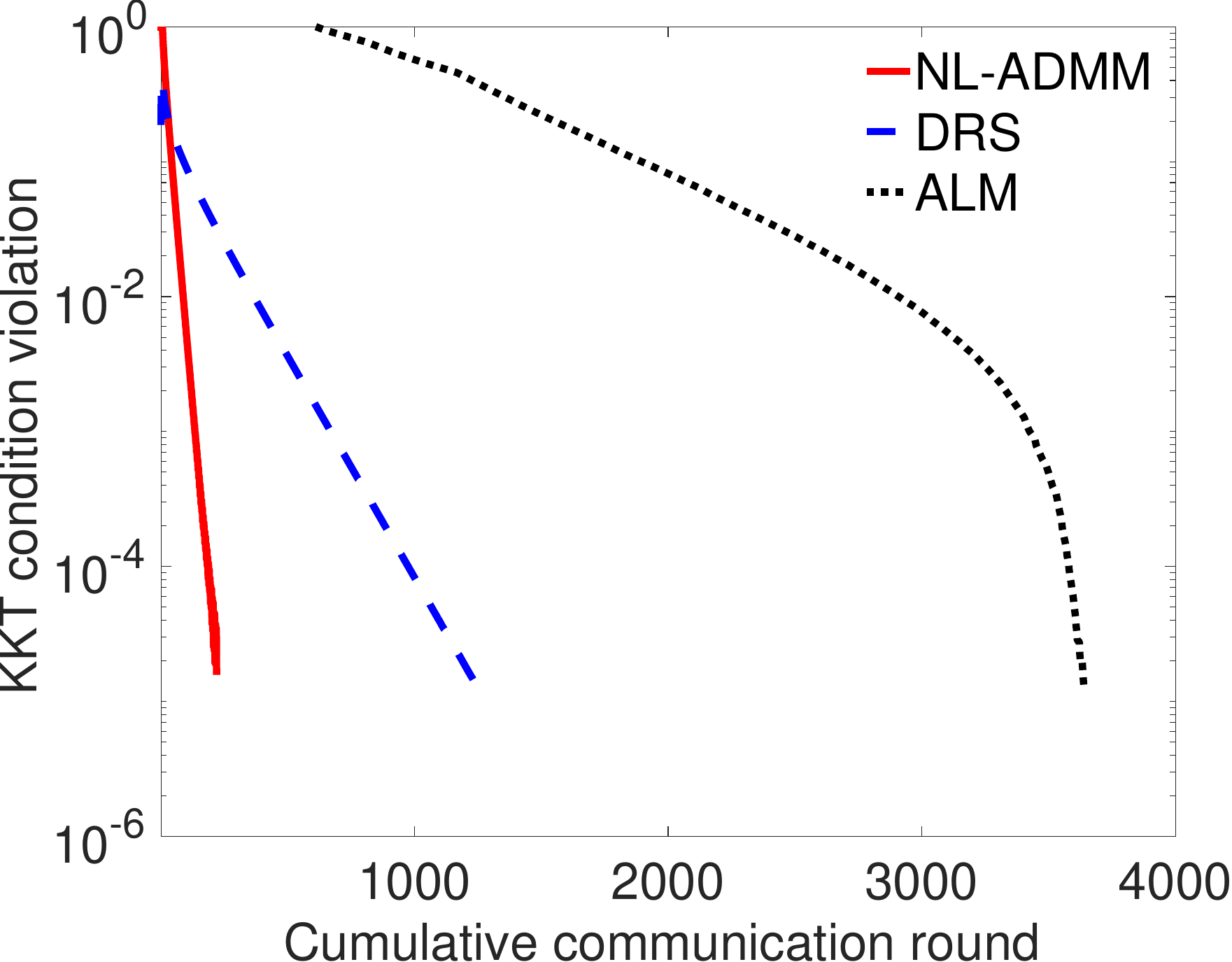}
    \caption{\(p=5\)}
    \label{fig:kkt-p5}
  \end{subfigure}
  \begin{subfigure}{0.23\textwidth}
    \centering
    \includegraphics[width=\linewidth]{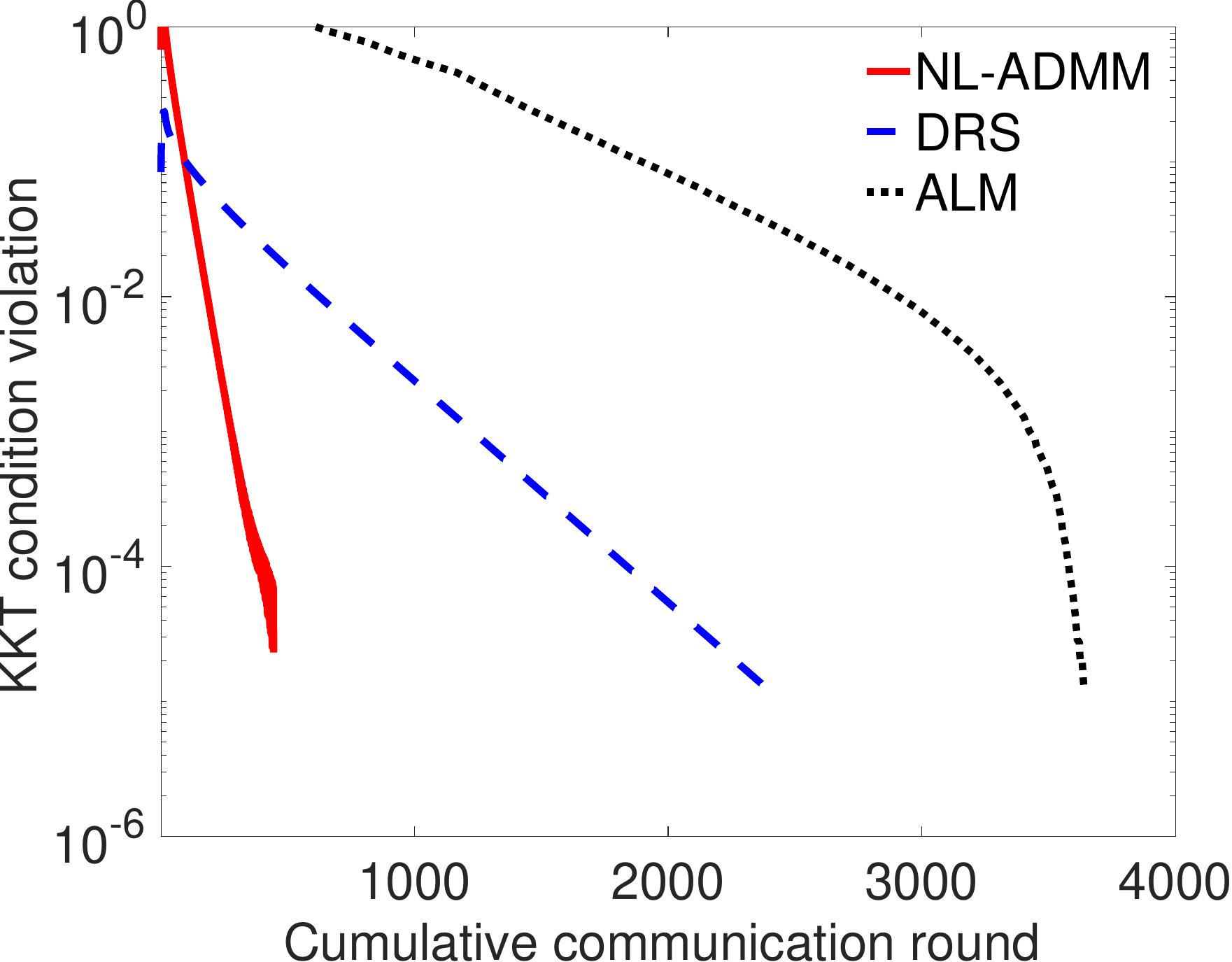}
    \caption{\(p=10\)}
    \label{fig:kkt-p10}
  \end{subfigure}

  \caption{Convergence behavior of three compared methods on solving instance of \eqref{eq:consensus} with the \texttt{madelon} dataset. 
  Top row: objective value gaps $|f(\vx)+g(\tilde{\vy})-\big(f(\vx^{*})+g(\tilde{\vy}^{*})\big)|$. Bottom row: KKT violation, defined as the sum of primal, dual, and complementarity residuals. Columns correspond to different number $p$ of workers. The reference values $f(\vx^{*})+g(\tilde{\vy}^{*})$ are obtained from running ALM to a higher accuracy. The displayed 
  ALM uses $94$ outer iterations.}
  \label{fig:madelon-2x3}
\end{figure}

\begin{figure}
    \centering
  \begin{subfigure}{0.23\textwidth}
    \includegraphics[width=\linewidth]{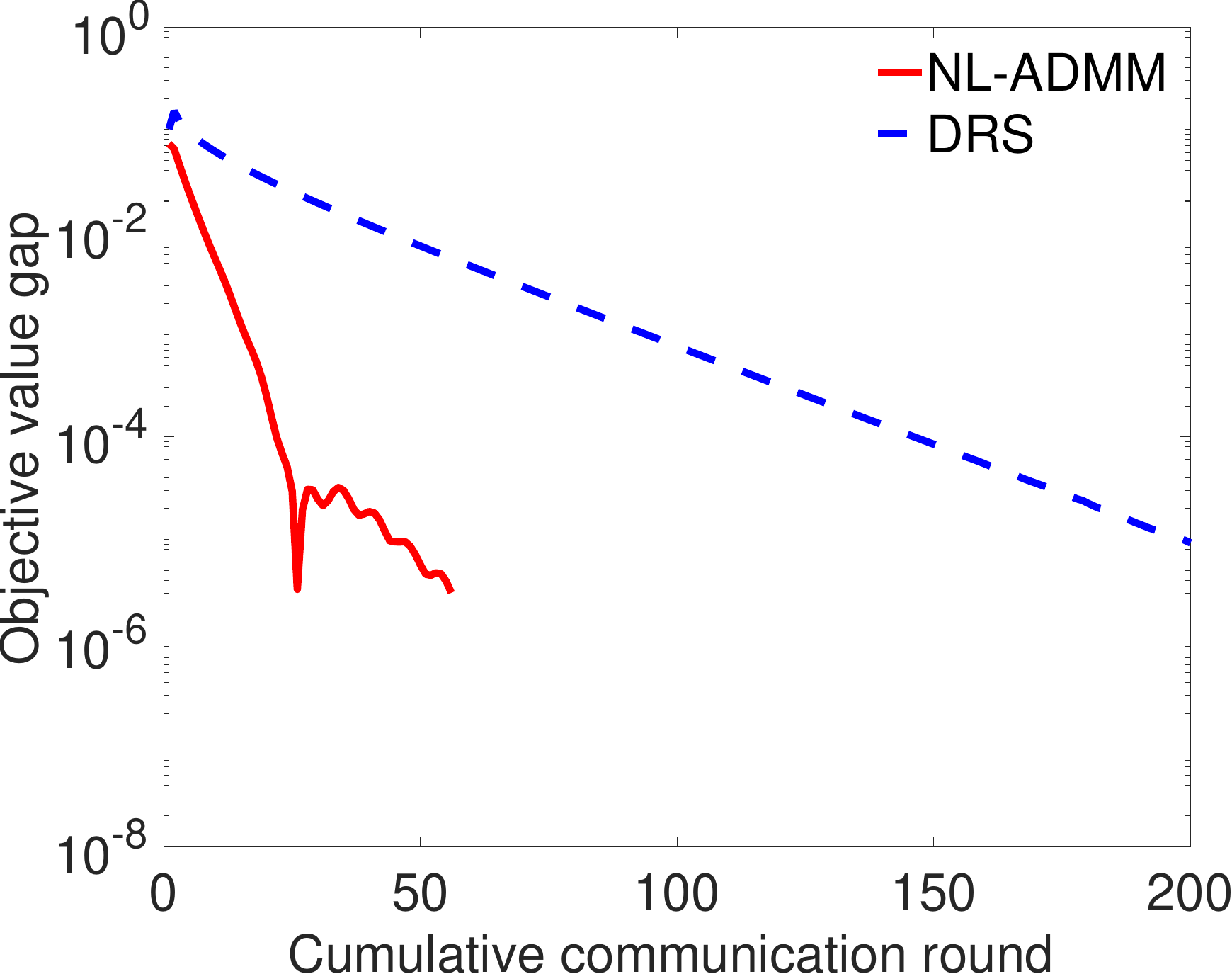}
    \label{fig:aus-gap-p2}
  \end{subfigure}
  \begin{subfigure}{0.23\textwidth}
    \centering
    \includegraphics[width=\linewidth]{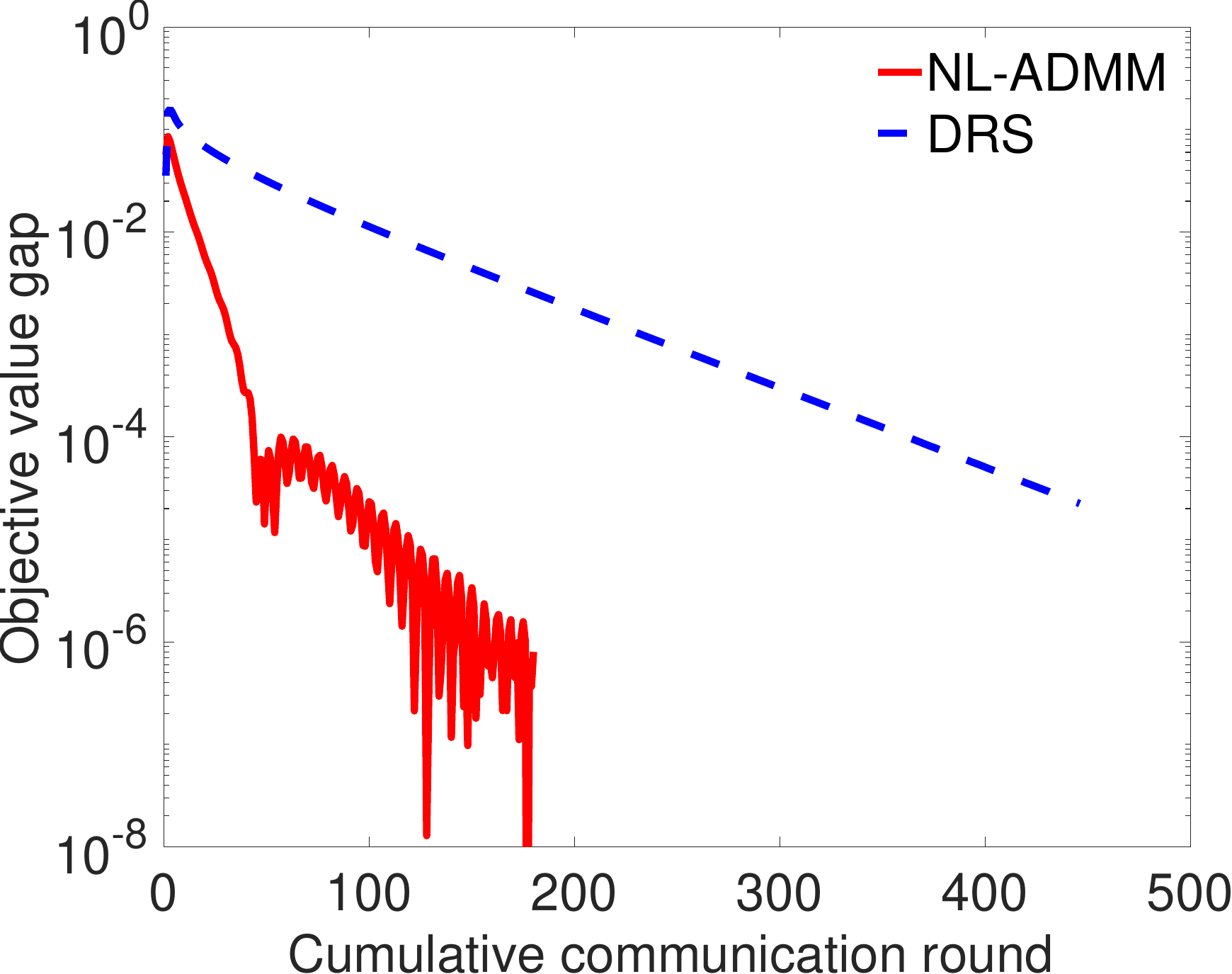}
    \label{fig:aus-gap-p5}
  \end{subfigure}
  \begin{subfigure}{0.23\textwidth}
    \centering
    \includegraphics[width=\linewidth]{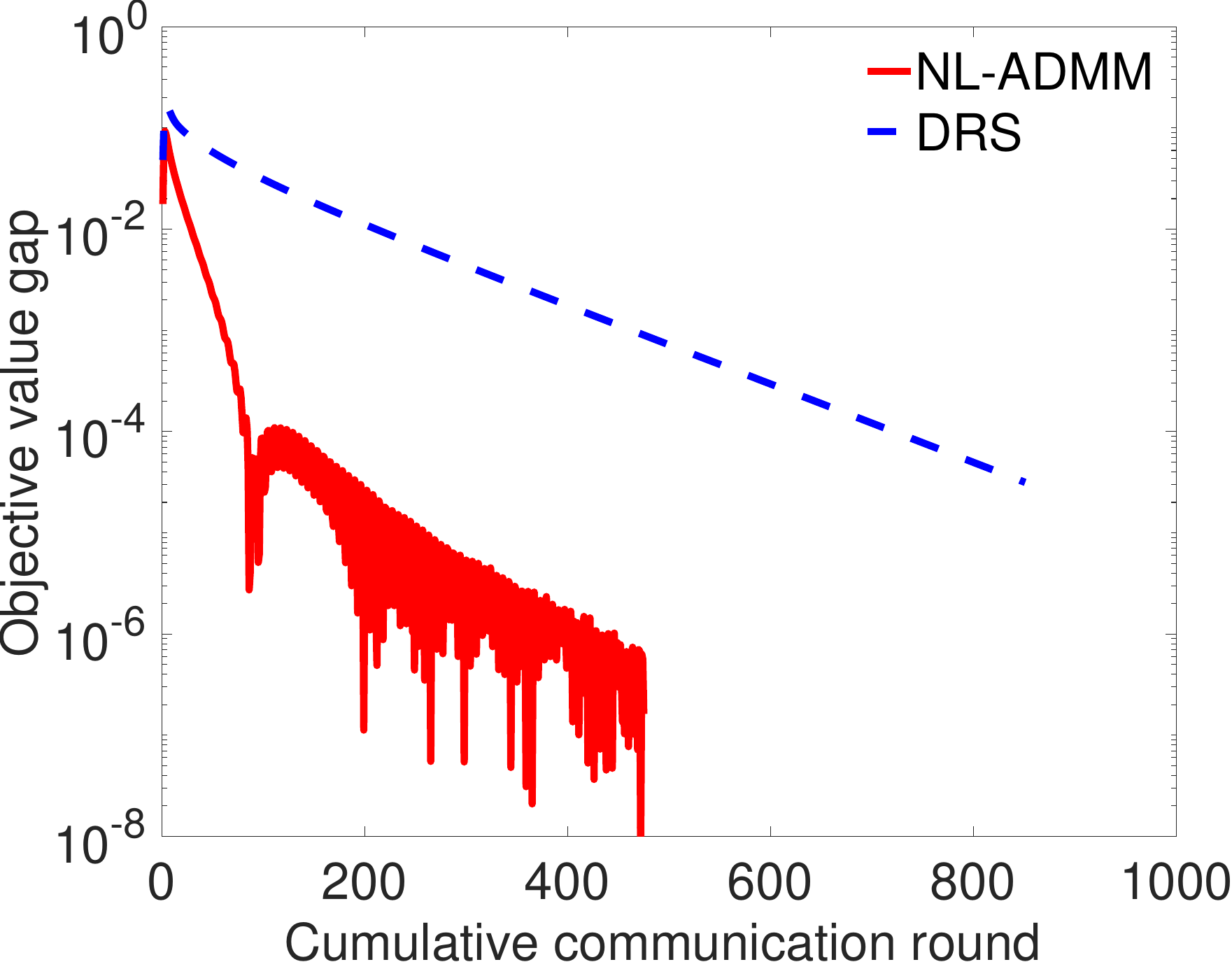}
    \label{fig:aus-gap-p10}
  \end{subfigure}

  \vspace{0.4em}

  \centering
  \begin{subfigure}{0.23\textwidth}
    \includegraphics[width=\linewidth]{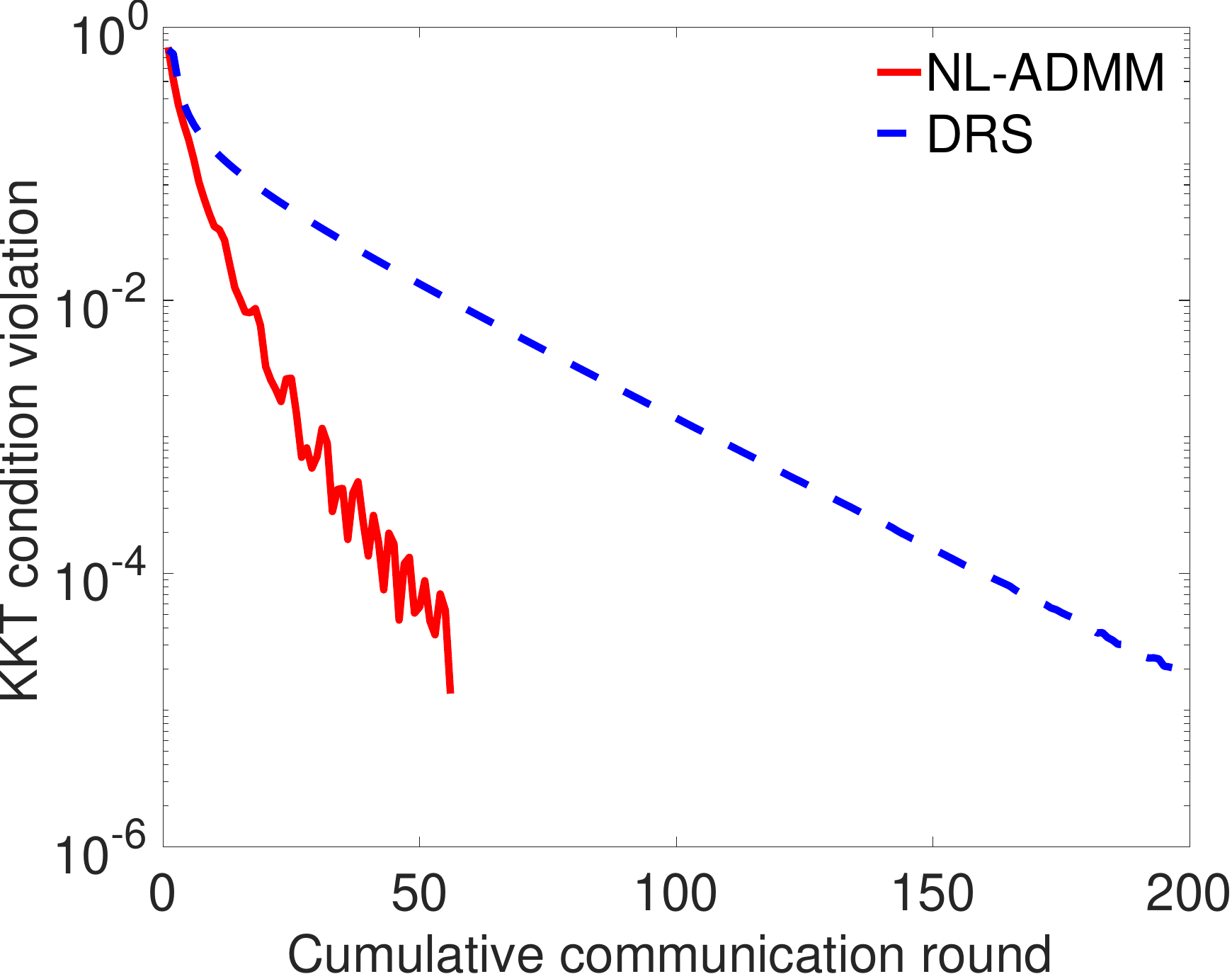}
    \caption{\(p=2\)}
    \label{fig:aus-kkt-p2}
  \end{subfigure}
  \begin{subfigure}{0.23\textwidth}
    \centering
    \includegraphics[width=\linewidth]{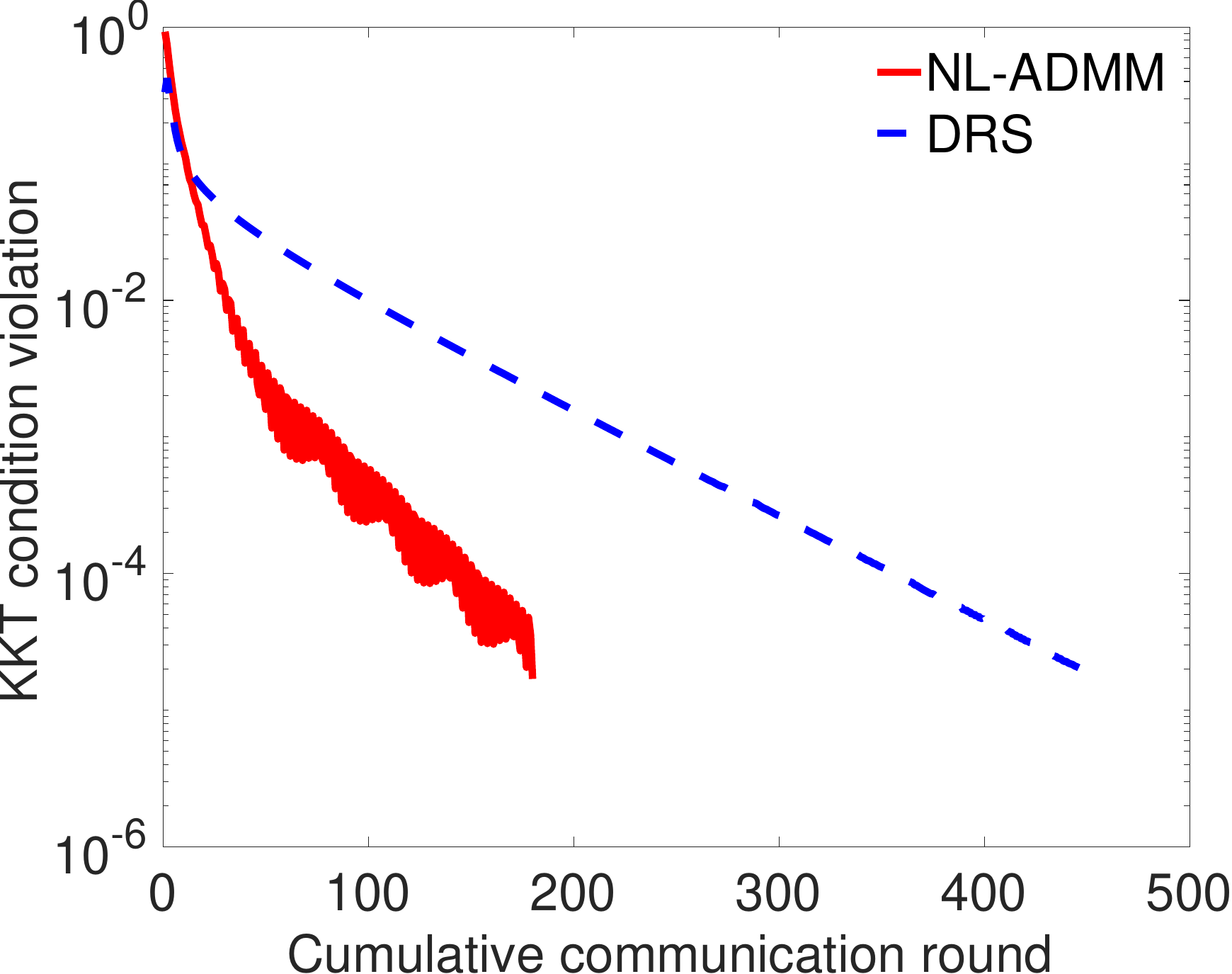}
    \caption{\(p=5\)}
    \label{fig:aus-kkt-p5}
  \end{subfigure}
  \begin{subfigure}{0.23\textwidth}
    \includegraphics[width=\linewidth]{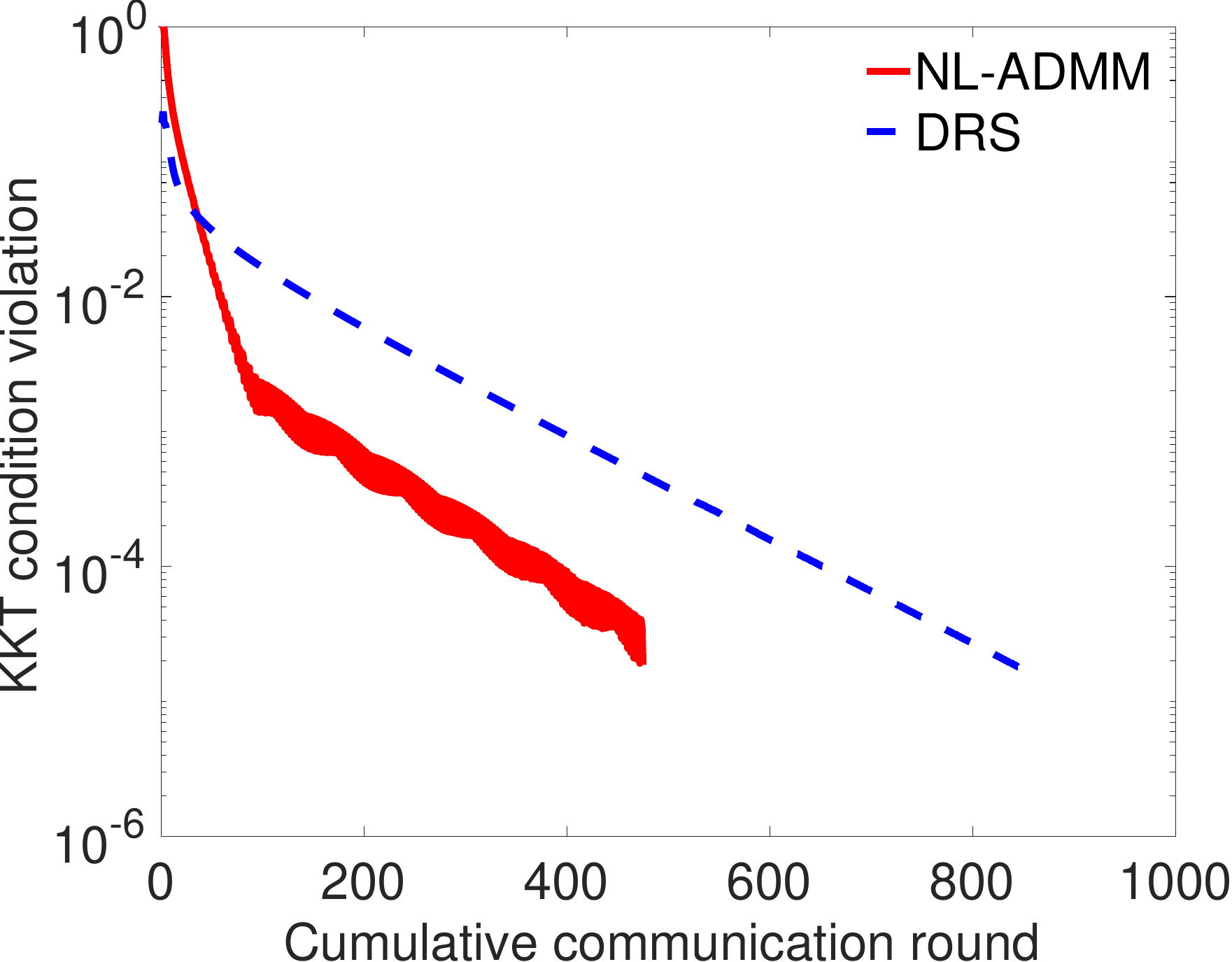}
    \caption{\(p=10\)}
    \label{fig:aus-kkt-p10}
  \end{subfigure}

  \caption{Convergence behavior of the proposed nonlinear ADMM and the DRS baseline on solving instance of \eqref{eq:consensus} with the \texttt{australian\_scale} dataset. 
  The top row shows objective value gaps $|f(\vx)+g(\tilde{\vy})-\big(f(\vx^{*})+g(\tilde{\vy}^{*})\big)|$, while the bottom row reports KKT violation defined as the sum of primal, dual, and complementarity residuals. Columns correspond to different number $p$ of workers. 
  Reference values $f(\vx^{*})+g(\tilde{\vy}^{*})$ are obtained from a high-accuracy ALM solution. The moderate-accuracy ALM run uses $39$ outer iterations and $4415$ communication rounds.}
  \label{fig:australian_scale-2x3}
\end{figure}
\begin{figure}
    \centering
  \begin{subfigure}{0.23\textwidth}
    \includegraphics[width=\linewidth]{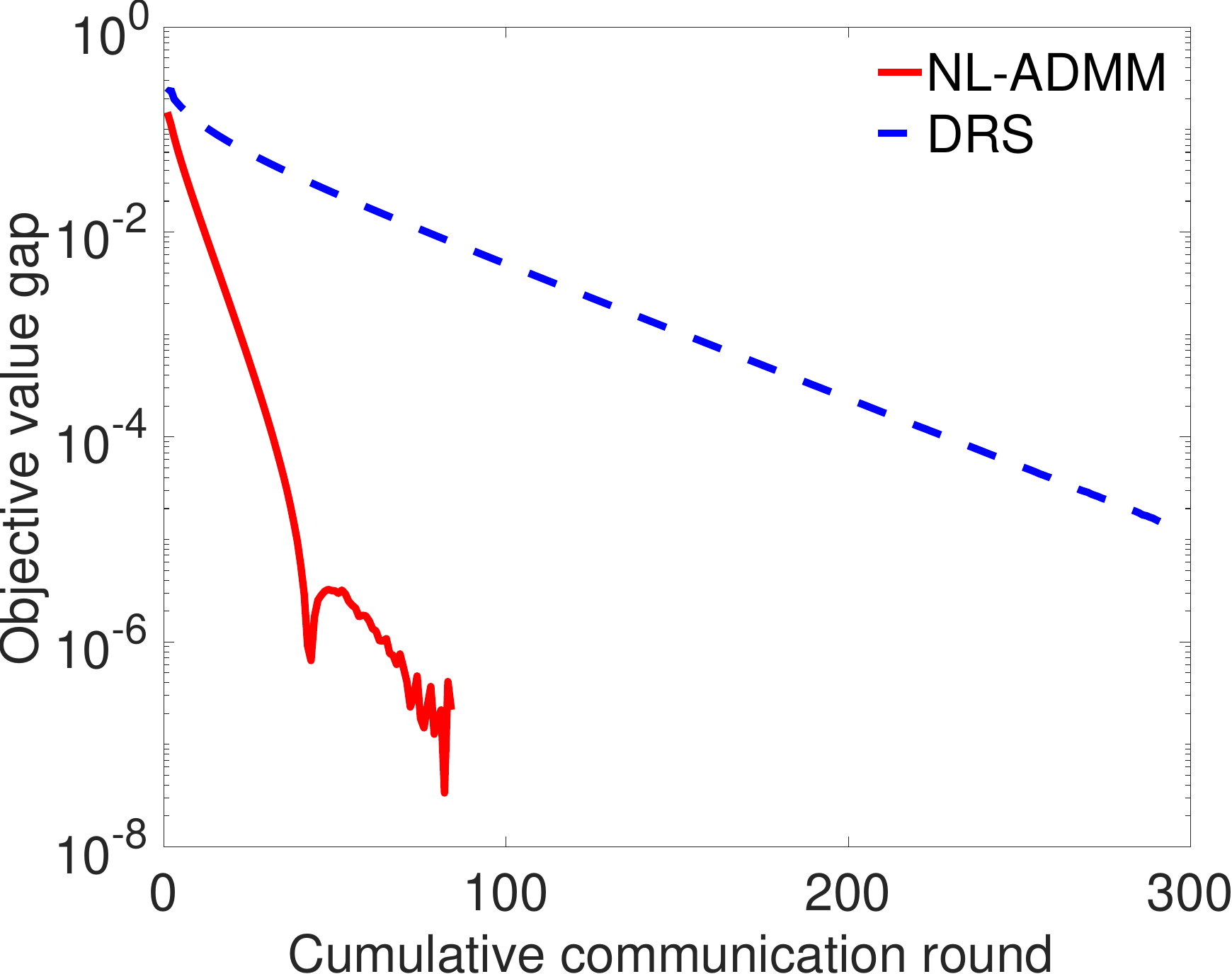}
    \label{fig:a9a-gap-p2}
  \end{subfigure}
  \begin{subfigure}{0.23\textwidth}
    \centering
    \includegraphics[width=\linewidth]{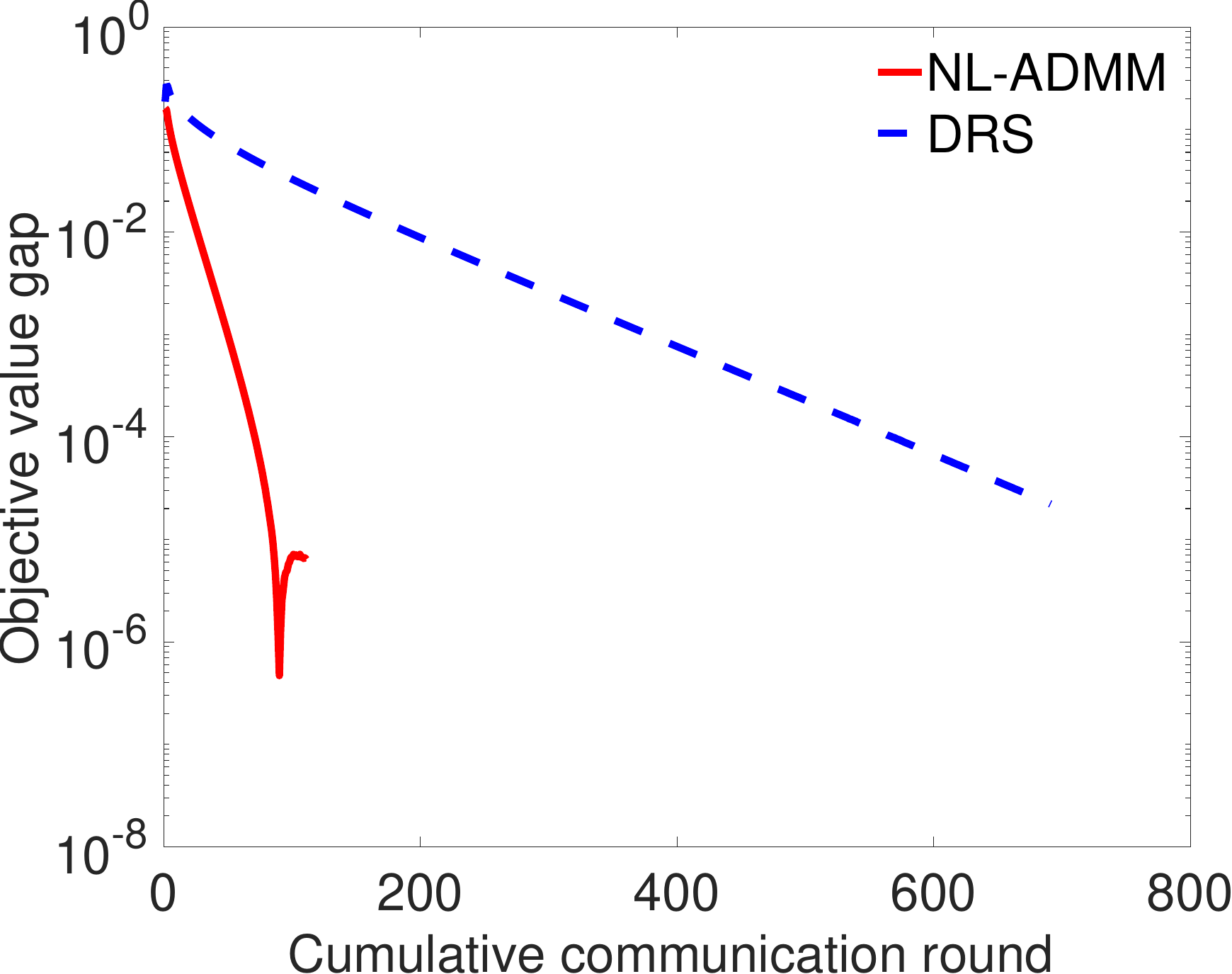}
    \label{fig:a9a-gap-p5}
  \end{subfigure}
  \begin{subfigure}{0.23\textwidth}
    \centering
    \includegraphics[width=\linewidth]{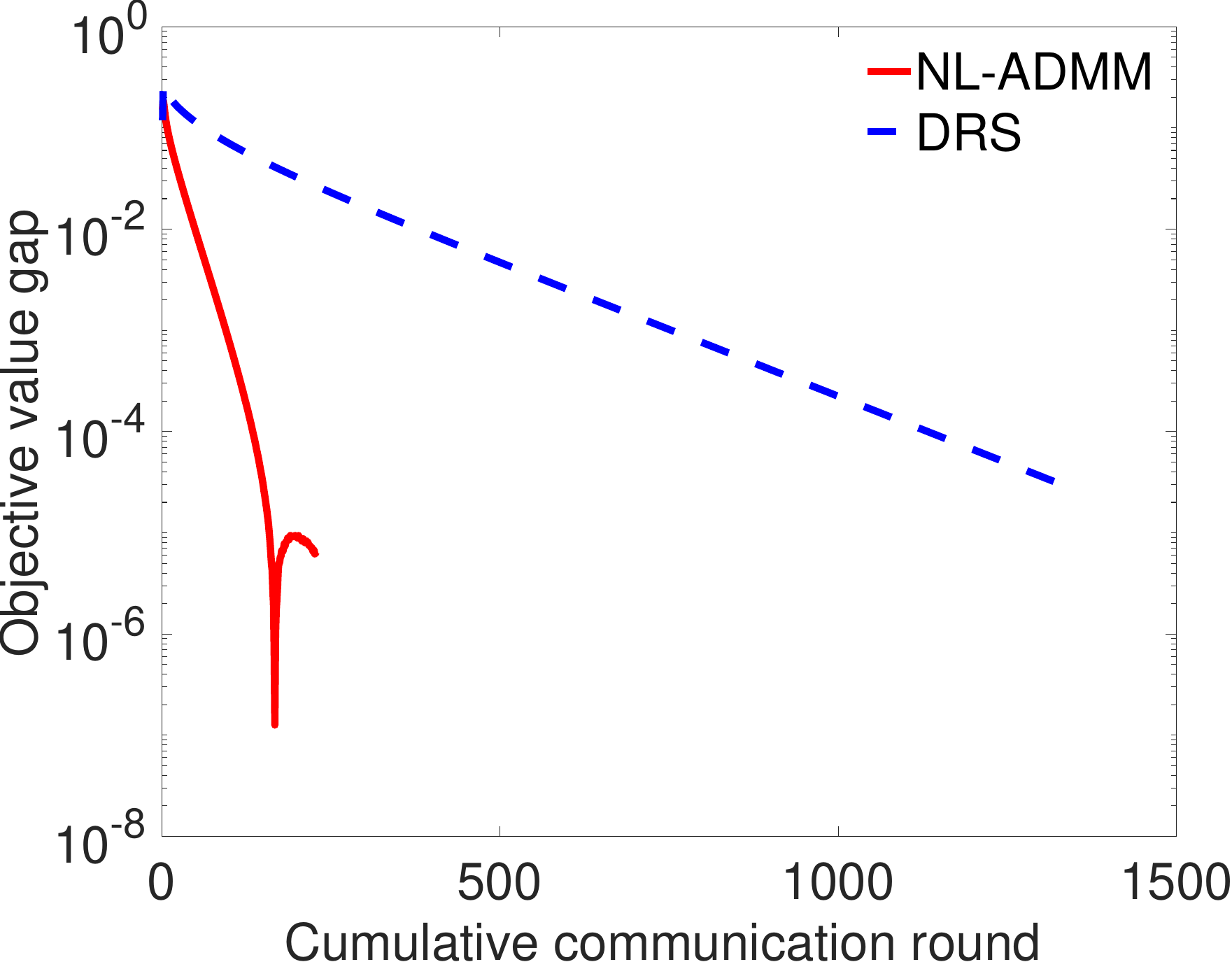}
    \label{fig:a9a-gap-p10}
  \end{subfigure}

  \vspace{0.4em}

  \centering
  \begin{subfigure}{0.23\textwidth}
    \includegraphics[width=\linewidth]{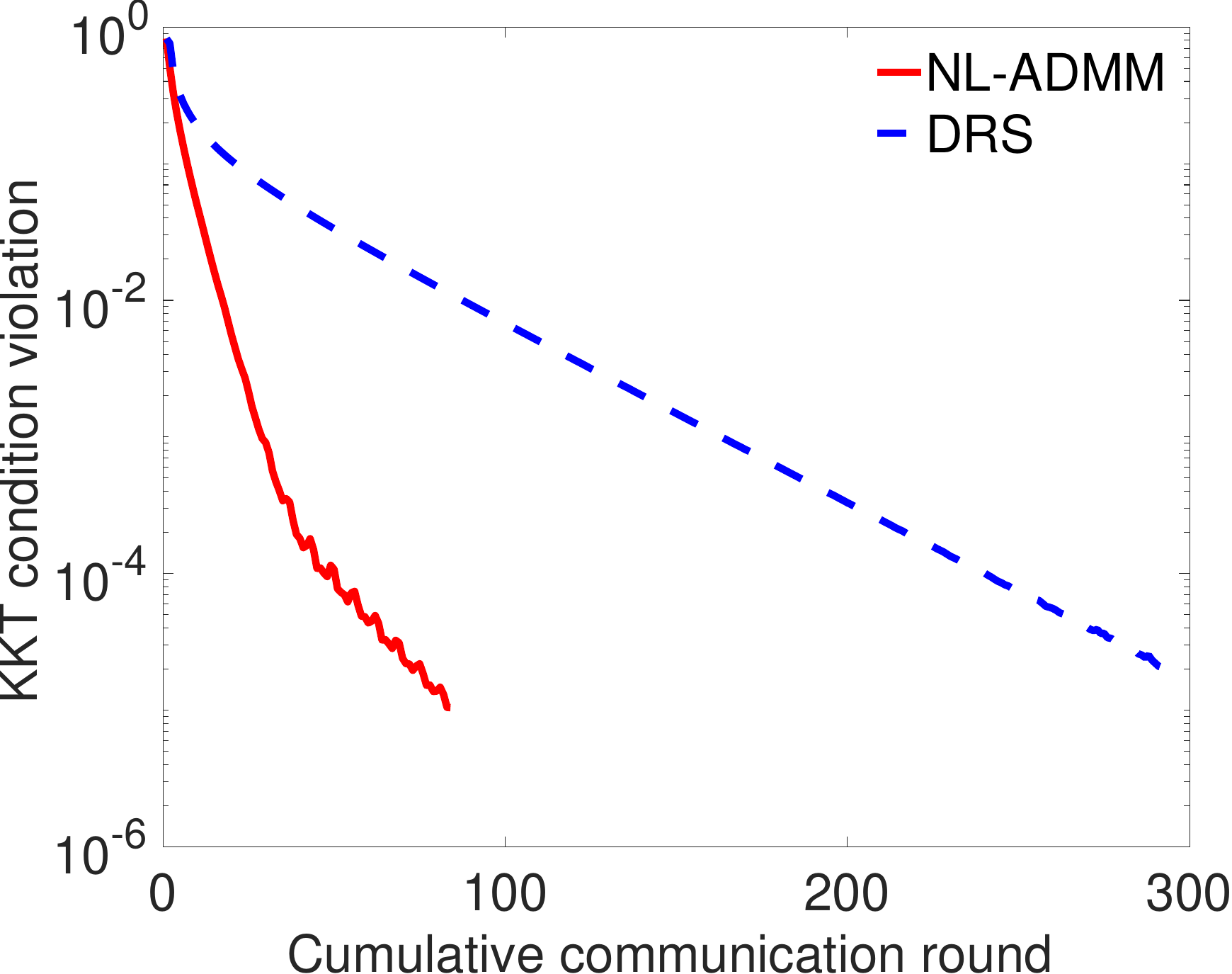}
    \caption{\(p=2\)}
    \label{fig:a9a-kkt-p2}
  \end{subfigure}
  \begin{subfigure}{0.23\textwidth}
    \centering
    \includegraphics[width=\linewidth]{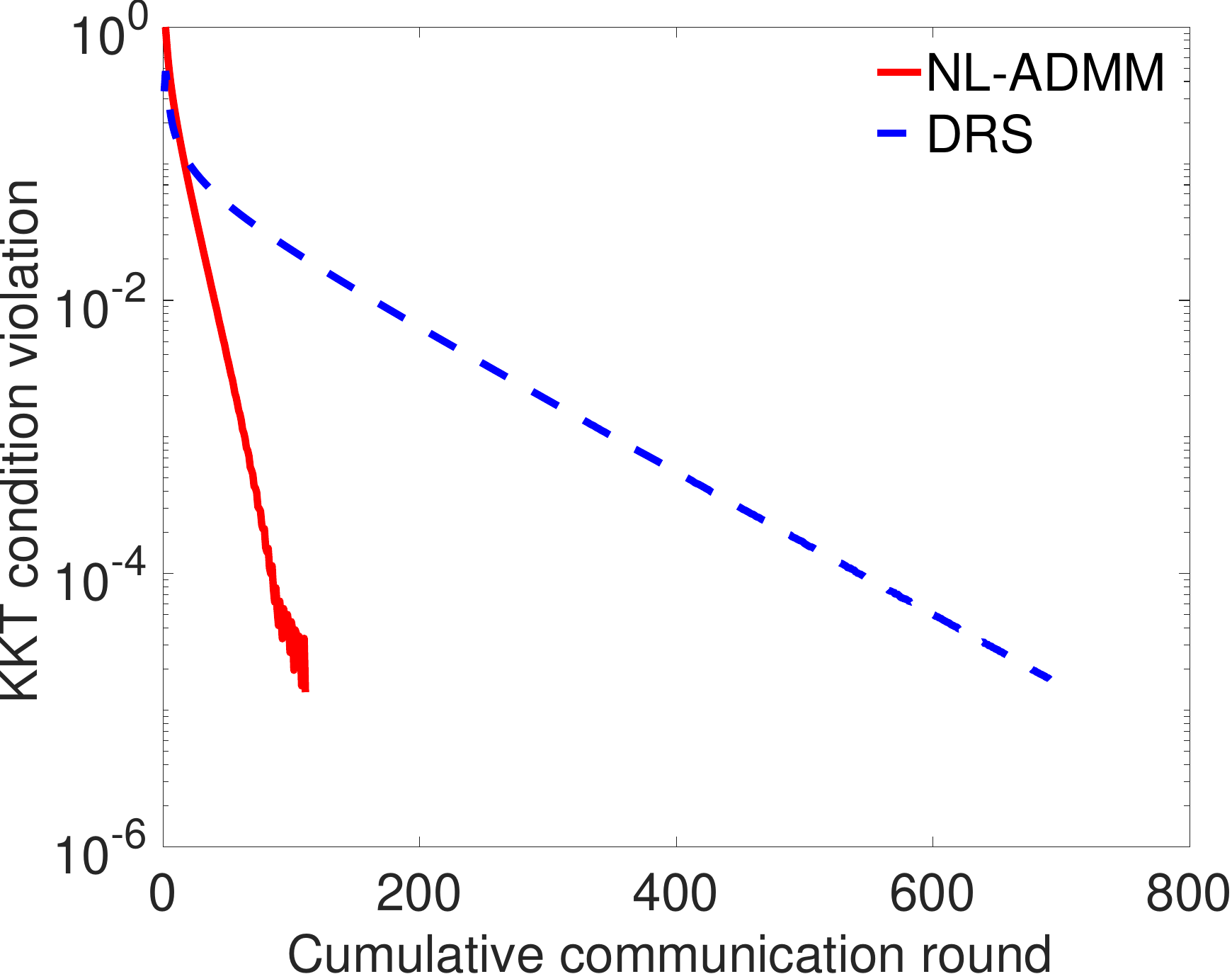}
    \caption{\(p=5\)}
    \label{fig:a9a-kkt-p5}
  \end{subfigure}
  \begin{subfigure}{0.23\textwidth}
    \includegraphics[width=\linewidth]{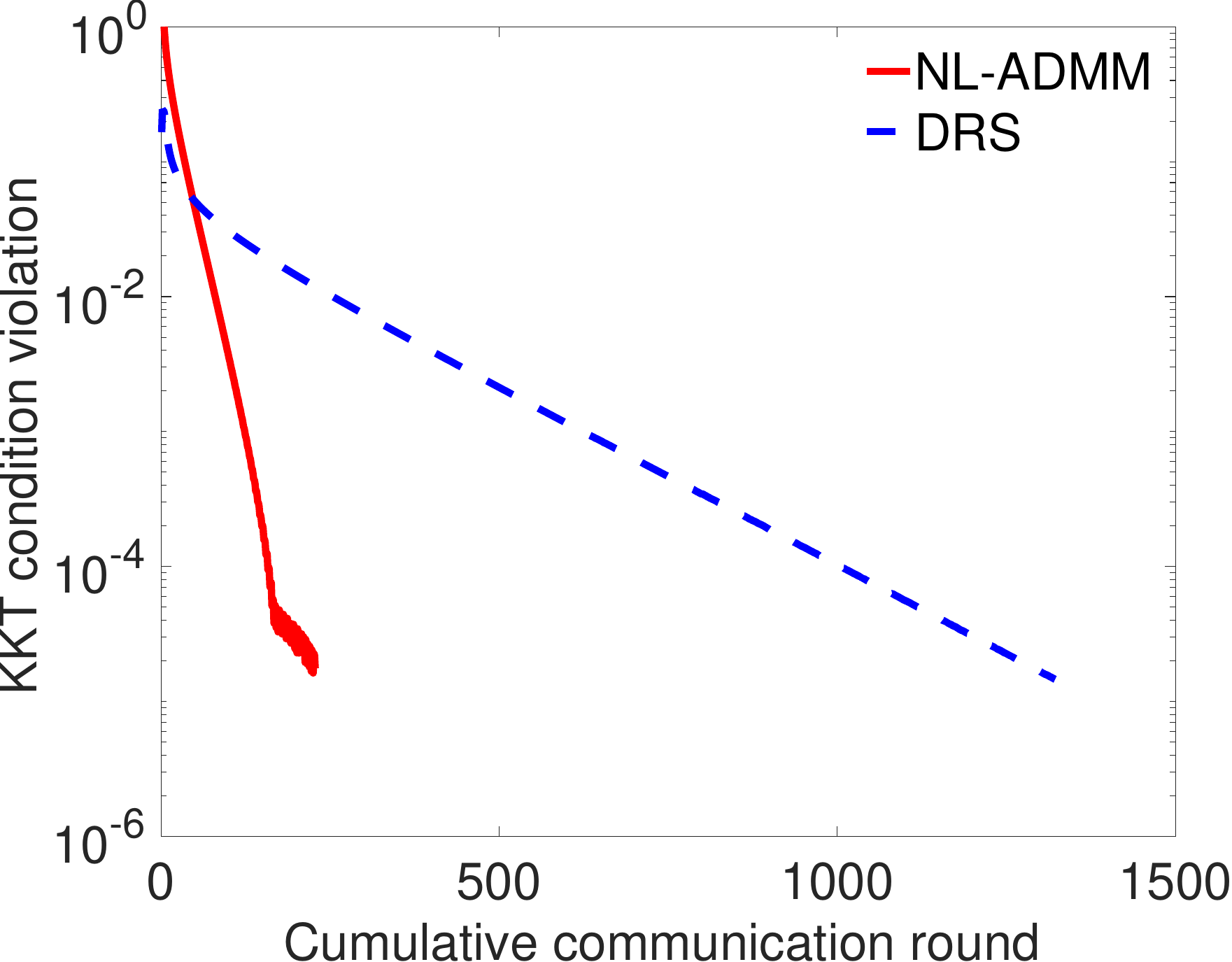}
    \caption{\(p=10\)}
    \label{fig:a9a-kkt-p10}
  \end{subfigure}

  \caption{Convergence behavior of the proposed nonlinear ADMM and the DRS baseline on solving instance of \eqref{eq:consensus} with the \texttt{a9a} dataset. 
  The top row shows objective value gaps $|f(\vx)+g(\tilde{\vy})-\big(f(\vx^{*})+g(\tilde{\vy}^{*})\big)|$, while the bottom row reports KKT violation defined as the sum of primal, dual, and complementarity residuals. Columns correspond to different number $p$ of workers. 
  Reference values $f(\vx^{*})+g(\tilde{\vy}^{*})$ are obtained from a high-accuracy ALM solution. The moderate-accuracy ALM run uses $56$ outer iterations and $10863$ communication rounds.}
  \label{fig:a9a_scale-2x3}
\end{figure}

\section{Concluding Remarks} 

Motivated by large-scale resource allocation and constrained learning tasks, we developed a nonlinear alternating direction method of multipliers (NL-ADMM) for two-block convex optimization problems with nonlinear inequality and affine equality constraints. Under a standard KKT existence assumption, we established global convergence of NL-ADMM and derived an ergodic $\mathcal{O}(1/k)$ convergence rate in both feasibility violation and objective optimality, thereby extending classical ADMM theory beyond linear constraint settings. 

From a practical perspective, NL-ADMM is particularly well suited for distributed environments in which communication cost is a primary concern. Numerical experiments on representative distributed resource allocation and constrained machine learning problems show that NL-ADMM consistently achieves higher communication efficiency than the classical augmented Lagrangian method and Douglas–Rachford Splitting method across a range of problem instances. 

To the best of our knowledge, this work represents 
the first systematic study of ADMM-type methods for convex optimization problems with general nonlinear functional constraints. Several directions merit further investigation, including stochastic (sub)gradient variants for data-intensive constrained learning and decentralized implementations for resource allocation over networked agents. Overall, this work positions NL-ADMM as a communication-efficient framework for large-scale distributed constrained optimization and provides a foundation for future stochastic and decentralized extensions.

\appendix

\section{Derivation of DRS update in Section \ref{sec:numerical}}
We consider the following problem:
\begin{equation}
\label{eq:DRS}
\min_{\vx_{0}, \vx,\vy} \sum_{j=1}^{p}f_{j}(\vx_{j})+\iota_{\mathcal{Y}}(\vy), 
\quad \st \vh(\vx) \leq \vy, \quad M_{0}\vx_{0}+M_{1}\vx=\mathbf{0},
\end{equation}
where $\mathcal{Y}=\{\vy: \sum_{j=1}^{p} \vy_j = 0\}$.
When both $M_{0}$ and $M_{1}$ are zero matrices, problem~\eqref{eq:DRS} reduces to the resource allocation problem~\eqref{eq:resource_allocation}.  
When
$M_{0}=-(\mathbf{1}\otimes I)$, and $M_{1}=I$,
it reduces to the consensus formulation~\eqref{eq:consensus}. 
Thus we only need to derive the DRS update for problem \eqref{eq:DRS} to recover the updates in \eqref{DRS_algorithm_1} and \eqref{DRS_algorithm_2}.

Introducing dual variables $\vu\geq 0$ and $\vv$, we define the  Lagrangian function of \eqref{eq:DRS} as follows
\begin{equation*}
    \mathcal{L}(\vx_{0},\vx,\vy,\vu,\vv):=\sum_{j=1}^{p}f_{j}(\vx_{j})+\iota_{\mathcal{Y}}(\vy)+\langle \vu,\vh(\vx)-\vy\rangle+\langle \vv, M_{0}\vx_{0}+M_{1}\vx\rangle.
\end{equation*}
Then the 
Lagrangian dual problem is 
\begin{equation*}
\max_{\vu\geq 0,\vv}\inf_{\vx_{0}, \vx,\vy}\mathcal{L}(\vx_{0},\vx,\vy,\vu,\vv).
\end{equation*}
Note that for the $\vy$-dependent part in the above formulation, we have 
\begin{equation*}
    \inf_{\vy}\iota_{\mathcal{Y}}(\vy)-\langle \vu,\vy\rangle
=\begin{cases}
0, & \text{if }\vu\in \text{span}\{\mathbf{1}\},\\
-\infty, & \text{otherwise}.
\end{cases} 
\end{equation*}
Also, for the $\vx_{0}$-dependent part, it holds 
\begin{equation*}
    \inf_{\vx_{0}}\langle \vv, M_{0}\vx_{0}\rangle
=\begin{cases}
0, & \text{if } M_{0}^{\top}\vv=\vzero,\\
-\infty, & \text{otherwise}.
\end{cases} 
\end{equation*}

Based on these observations, we define the following two functions:
\begin{equation*}
\begin{aligned}
d_{g}(\vu,\vv)&:=\iota_{\mathcal{U}}(\vu)+\iota_{\mathcal{V}}(\vv), \ \text{with } \mathcal{U}:={\{\vu:\vu=\alpha \mathbf{1},\alpha\in \mathbb{R}\}} \text{ and } \mathcal{V}:=\{\vv:M_{0}^{\top}\vv=\vzero\},\\
d_{f}(\vu,\vv)&:=-\inf_{\vx}\left(\sum_{j=1}^{p}f_{j}(\vx_{j})+\langle \vu,\vh(\vx)\rangle+\langle \vv, M_{1}\vx\rangle\right)+\iota_{+}(\vu).
\end{aligned}
\end{equation*}
Therefore, the Lagrangian dual problem becomes 
\begin{equation*}
    \max_{\vu,\vv}-d_{f}(\vu,\vv)-d_{g}(\vu,\vv),
\end{equation*}
which is equivalent to the following zero-inclusion problem
\begin{equation}
\label{DRS_optimality}
    \mathbf{0}\in \partial d_{f}(\vu,\vv)+\partial d_{g}(\vu,\vv).
\end{equation}
Then for a fixed $\eta\in (0,1]$, the DRS iteration takes the form
\begin{subequations}
\begin{align}
(\vu^{k},\vv^{k})&=\vJ_{\beta\partial d_{g}}(\vw^{k}),\label{DRS_update_1}\\
\vw^{k+1}&=(1-\eta)\vw^{k}+\eta(2\vJ_{\beta \partial d_{f}}-I)\left(2(\vu^{k},\vv^{k})-(\vw_{\vu}^{k}, \vw_{\vv}^{k})\right),\label{DRS_update_2}
\end{align}
\end{subequations}
where $\vw^{k}=(\vw_{\vu}^{k}, \vw_{\vv}^{k})$ and the resolvent operators are defined as
\begin{equation*}
    \mathbf{J}_{\beta \partial d_{g}}:=(I+\beta\partial d_{g})^{-1}, \quad \mathbf{J}_{\beta \partial d_{f}}:=(I+\beta\partial d_{f})^{-1}.
\end{equation*}

Below, we first find the closed-form expression for \eqref{DRS_update_1}. By definition of $\mathbf{J}_{\beta \partial d_{g}}$, we have
\begin{equation*}
\label{DRS_uv}
    (\vu^{k},\vv^{k})=\argmin_{\vu,\vv}\tfrac{1}{2\beta}\lVert \vu-\vw_{\vu}^{k}\rVert^{2}+\tfrac{1}{2\beta}\lVert \vv-\vw_{\vv}^{k}\rVert^{2}+d_{g}(\vu,\vv).
\end{equation*}
Then by the definition of $d_g$, the closed-form solution follows 
\begin{equation}
\label{DRS_v}
\vu^{k}=\tfrac{1}{p}\mathbf{1}\mathbf{1}^{\top}\vw_{\vu}^{k}, \quad    \vv^{k}=\Pi_{\mathcal{V}}(\vw_{\vv}^{k}).
\end{equation}
Second, we find the closed-form expression for \eqref{DRS_update_2}. For convenience, we define 
\begin{equation*}
\label{DRS_p}
    \vq^{k}=(\vq_{\vu}^{k},\vq_{\vv}^{k}):=2(\vu^{k},\vv^{k})-(\vw_{\vu}^{k},\vw_{\vv}^{k}), \quad 
    \vt^{k}=(\vt_{\vu}^{k},\vt_{\vv}^{k}):=\vJ_{\beta\partial d_{f}}(\vq_{\vu}^{k},\vq_{\vv}^{k}).
\end{equation*}
By the definition of $\mathbf{J}_{\beta\partial d_{f}}$ and $d_{f}(\vu,\vv)$, we have
\begin{equation*}
    (\vt_{\vu}^{k},\vt_{\vv}^{k})=\argmin_{\vt_{\vu}\geq \mathbf{0},\vt_{\vv}}\tfrac{1}{2\beta}\lVert \vt_{\vu}-\vq_{\vu}^{k}\rVert^{2}+\tfrac{1}{2\beta}\lVert \vt_{\vv}-\vq_{\vv}^{k}\rVert^{2}-\inf_{\vx}\left(\sum_{j=1}^{p}f_{j}(\vx_{j})+\langle \vt_{\vu},\vh(\vx)\rangle+\langle \vt_{\vv}, M_{1}\vx\rangle\right).
\end{equation*}
Note that 
\begin{equation}
\label{DRS_equivalence}
\begin{aligned}
    &\min_{\vt_{\vu}\geq \mathbf{0},\vt_{\vv}}\tfrac{1}{2\beta}\lVert \vt_{\vu}-\vq_{\vu}^{k}\rVert^{2}+\tfrac{1}{2\beta}\lVert \vt_{\vv}-\vq_{\vv}^{k}\rVert^{2}-\inf_{\vx}\left(\sum_{j=1}^{p}f_{j}(\vx_{j})+\langle \vt_{\vu},\vh(\vx)\rangle+\langle \vt_{\vv}, M_{1}\vx\rangle\right)\\
    \iff &\min_{\vt_{\vu}\geq \mathbf{0},\vt_{\vv}}\sup_{\vx}\left(-\sum_{j=1}^{p}f_{j}(\vx_{j})-\langle \vt_{\vu},\vh(\vx)\rangle-\langle \vt_{\vv}, M_{1}\vx\rangle\right)+\tfrac{1}{2\beta}\lVert \vt_{\vu}-\vq_{\vu}^{k}\rVert^{2}+\tfrac{1}{2\beta}\lVert \vt_{\vv}-\vq_{\vv}^{k}\rVert^{2}\\
    \iff &\sup_{\vx}-\sum_{j=1}^{p}f_{j}(\vx_{j})+\min_{\vt_{\vu}\geq \mathbf{0},\vt_{\vv}}\Big(-\langle \vt_{\vu},\vh(\vx)\rangle-\langle \vt_{\vv}, M_{1}\vx\rangle+\tfrac{1}{2\beta}\lVert \vt_{\vu}-\vq_{\vu}^{k}\rVert^{2}+\tfrac{1}{2\beta}\lVert \vt_{\vv}-\vq_{\vv}^{k}\rVert^{2}\Big).
\end{aligned}    
\end{equation}
Then we derive
\begin{equation*}
\begin{aligned}
    (\vt_{\vu}^{*}, \vt_{\vv}^{*})&=\argmin_{\vt_{\vu}\geq 0,\vt_{\vv}}-\langle \vt_{\vu},\vh(\vx)\rangle-\langle \vt_{\vv}, M_{1}\vx\rangle+\tfrac{1}{2\beta}\lVert \vt_{\vu}-\vq_{\vu}^{k}\rVert^{2}+\tfrac{1}{2\beta}\lVert \vt_{\vv}-\vq_{\vv}^{k}\rVert^{2}\\
    &=\left([\vq_{\vu}^{k}+\beta \vh(\vx)]_{+},\vq_{\vv}^{k}+\beta M_{1}\vx\right),
\end{aligned}
\end{equation*}
for which notice that the solution $(\vt_{\vu}^{*}, \vt_{\vv}^{*})$ depends on $\vx$. 
Plug the formulation of $ (\vt_{\vu}^{*}, \vt_{\vv}^{*})$ back into the last line of \eqref{DRS_equivalence} and simplify terms. We then obtain the following problem and denote $\vx^{k}$ as the optimal solution: 
\begin{equation}
\label{DRS_x}
\vx^{k}=\argmin_{\vx}\sum_{j=1}^{p}f_{j}(\vx_{j})-\tfrac{1}{2\beta}\lVert [-\vq_{\vu}^{k}-\beta \vh(\vx)]_{+}\rVert^{2}+\tfrac{\beta}{2}\lVert \vh(\vx)\rVert^{2}+\langle\vq_{\vu}^{k},\vh(\vx)\rangle+\tfrac{\beta}{2}\lVert M_{1}\vx\rVert^{2}+\langle \vq_{\vv}^{k},M_{1}\vx\rangle. 
\end{equation}
With the computation of $\vx^{k}$, we then let 
\begin{equation}
\label{DRS_w}
    (\vt_{\vu}^{k},\vt_{\vv}^{k})=\left ([\vq_{\vu}^{k}+\beta \vh(\vx^{k})]_{+},\vq_{\vv}^{k}+\beta M_{1}\vx^{k}\right ).
\end{equation}
Therefore, \eqref{DRS_update_2} becomes
\begin{equation*}
\label{DRS_omega}
\begin{aligned}
    \vw^{k+1}&=(1-\eta)\vw^{k}+\eta(2\vJ_{\beta \partial d_{f}}-I)\big(2(\vu^{k},\vv^{k})-(\vw_{\vu}^{k}, \vw_{\vv}^{k})\big)\\
    &=(1-\eta)\vw^{k}+\eta(2\vt^{k}-\vq^{k})\\
    &=(1-\eta)\vw^{k}+\eta\Big(2\vt^{k}-\left(2(\vu^{k},\vv^{k})-\vw^{k}\right)\Big)\\
    &=\vw^{k}+2\eta\left(\vt^{k}-(\vu^{k},\vv^{k})\right).
\end{aligned}
\end{equation*}

\subsection*{Recovering the updates in \eqref{DRS_algorithm_1} and \eqref{DRS_algorithm_2}} 
When $M_{0}$ and $M_{1}$ are zero matrices, \yx{the iterates corresponding to $\vv$ block will remain constant, and thus} we do not need to consider 
the variables and terms with respect to the equality constraint and the $\vx_{0}$ update. Due to the \yx{separable} block structure of $\vh(\vx)$, the solution $\vx^k$ in
\eqref{DRS_x} can be written as follows after simplification
\begin{equation*}
\vx_{j}^{k}=\argmin_{\vx_{j}}f_{j}(\vx_{j})+\tfrac{\beta}{2}\left ( \Big[h_{j}(\vx_{j})+\tfrac{\vq_{\vu,j}^{k}}{\beta}\Big]_{+}\right )^{2} ,\quad j=1,\dots,p.
\end{equation*}
Hence, for problem \eqref{eq:resource_allocation}, we have the following DRS update:  
\begin{subequations}
\begin{align}
\vu^{k}&= \frac{1}{p}\mathbf{1}\mathbf{1}^{\top}\vw^{k},\label{eq:drs-1-u}\\
\vq^{k}&=2\vu^{k}-\vw^{k},\\
\vx_{j}^{k}&=\argmin_{\vx_{j}}f_{j}(\vx_{j})+\tfrac{\beta}{2}\left ( \Big[h_{j}(\vx_{j})+\tfrac{\vq_{j}^{k}}{\beta}\Big]_{+}\right )^{2} ,\quad j=1,\dots,p,\\
\vt^{k}&=[\vq^{k}+\beta \vh(\vx^{k})]_{+},\\
\vw^{k+1} &= \vw^{k}+2\eta(\vt^{k}-\vu^{k}).\label{eq:drs-1-w}
\end{align}
\end{subequations} 
Now define $\tilde{\vw}^{k}:=\frac{\vw^{k}}{\beta}$, $\tilde{\vu}^{k}:=\frac{\vu^{k}}{\beta}$, $\tilde{\vq}^{k}:=\frac{\vq^{k}}{\beta}$, and $\tilde{\vt}^{k}:=\frac{\vt^{k}}{\beta}$. Then the update to $(\tilde{\vw}^{k}, \tilde{\vu}^{k}, \tilde{\vq}^{k}, \tilde{\vt}^{k})$ will reduce exactly to the updates in \eqref{eq:drs-u}-\eqref{eq:drs-w}. 

When $M_{0}=-(\mathbf{1}\otimes I)$ and $M_{1}=I$, according to \eqref{DRS_v}, we derive 
\begin{equation*}
    \vv^{k}=\vw_{\vv}^{k}-(\mathbf{1}\otimes I)\bar{\vw}_{\vv}^{k}, \ \text{with} \
\bar{\vw}_{\vv}^{k}=\frac{1}{p}\sum_{j=1}^{p}\vw_{\vv,j}^{k}.
\end{equation*}
Moreover, because of the separable block structure of $\vh(\vx)$, \eqref{DRS_x} becomes
\begin{equation*}
\vx_{j}^{k}=\argmin_{\vx_{j}}f_{j}(\vx_{j})+\tfrac{\beta}{2}\left ( \Big[h_{j}(\vx_{j})+\tfrac{\vq_{\vu,j}^{k}}{\beta}\Big]_{+}\right )^{2}
+\tfrac{\beta}{2}\Big\| \vx_j+\tfrac{\vq_{\vv,j}^{k}}{\beta} \Big\|^{2}, \quad j=1,\dots,p,
\end{equation*}
and \eqref{DRS_w} becomes
\begin{equation*}
(\vt_{\vu}^{k},\vt_{\vv}^{k})=\left ([\vq_{\vu}^{k}+\beta \vh(\vx^{k})]_{+},\vq_{\vv}^{k}+\beta \vx^{k}\right).    
\end{equation*}
Hence, for problem \eqref{eq:consensus}, we have the following DRS update:  
\begin{subequations}
\begin{align}
(\vu^{k}, \vv^{k}) &= \left(\tfrac{1}{p}\mathbf{1}\mathbf{1}^{\top}\vw_{\vu}^{k},\vw_{\vv}^{k}-(\mathbf{1}\otimes I)\bar{\vw}_{\vv}^{k}\right),\  \text{with} \
\bar{\vw}_{\vv}^{k}=\frac{1}{p}\sum_{j=1}^{p}\vw_{\vv,j}^{k},\\
\vq^{k}&=(\vq_{\vu}^{k},\vq_{\vv}^{k}) = \big(2\vu^{k}-\vw_{\vu}^{k}, 2\vv^{k}-\vw_{\vv}^{k}\big),\\
\vx_{j}^{k}&=\argmin_{\vx_{j}}f_{j}(\vx_{j})+\tfrac{\beta}{2}\left ( \Big[h_{j}(\vx_{j})+\tfrac{\vq_{\vu,j}^{k}}{\beta}\Big]_{+}\right )^{2}+\tfrac{\beta}{2}
\Big\|\vx_{j}+\tfrac{\vq_{\vv,j}^{k}}{\beta}\Big\|^{2},\quad j=1,\dots,p,\\
\vt^{k}&=(\vt_{\vu}^{k},\vt_{\vv}^{k})=\big([\vq_{\vu}^{k}+\beta \vh(\vx^{k})]_{+},\vq_{\vv}^{k}+\beta\vx^{k}\big),\\
\vw^{k+1} &= \vw^{k}+2\eta\big(\vt_{\vu}^{k}-\vu^{k},\vt_{\vv}^{k}-\vv^{k}\big).
\end{align}
\end{subequations}
Finally, similar to the updates in \eqref{eq:drs-1-u}-\eqref{eq:drs-1-w}, we scale the above iterates except $\vx^k$ by $\frac{1}{\beta}$ and recover those in \eqref{eq:DRS-uv}-\eqref{eq:DRS-w-p2}. 

\bibliographystyle{abbrv}
			\bibliography{optim, Pub_YXu}
\end{document}